\documentclass[11pt,oneside,reqno,letterpaper]{amsart}
\usepackage[margin=1in,headsep=0.6in,footskip=0.8in]{geometry}
\usepackage{graphicx}
\usepackage{comment}
\usepackage{bm}
\usepackage{subcaption}
\captionsetup[subfloat]{labelfont = {rm},format = plain,textfont = rm,labelformat = parens}
\usepackage[dvipsnames]{xcolor}
\usepackage[T1]{fontenc}
\usepackage{amsmath,amssymb,amsthm}
\usepackage[english]{babel}
\usepackage{enumitem}
\usepackage{amsfonts}
\usepackage{times}
\usepackage{bbm}

\usepackage{hyperref}
\hypersetup{
 colorlinks,
 citecolor=Maroon,
 linkcolor=Maroon,
 urlcolor=Maroon}

\AtBeginDocument{%
   \setlength\abovedisplayskip{11pt}
   \setlength\belowdisplayskip{12pt}}
\setlength{\oddsidemargin}{0.5cm}
\setlength{\textwidth}{14.4cm}
\setlength{\topmargin}{-0.35in}
\setlength{\textheight}{8.48in}
\setlength{\parskip}{1pt}%

%\DeclareGraphicsRule{.tif}{png}{.png}{`convert #1 `dirname #1`/`basename #1 .tif`.png}
%------------------------------------------------------------------------------
% FONT KERNING IMPROVEMENT
%------------------------------------------------------------------------------
%%%%%%%%%%%%%%%%%%%%%%%%%%%%%%%%%%%%%%%%%%%%%%%%%%%%%%%%%%%%%%%%%%%%%%%%%%%%%%%%%%%
%%%%%%%%%%%%%%%%%%%%%%%%%%%%%%%%%%%%%%%%%%%%%%%%%%%%%%%%%%%%%%%%%%%%%%%%%%%%%%%%%%%
% Subsection --> boldface

\makeatletter
\def\@seccntformat#1{\hspace*{0mm}%
  \protect\textup{\protect\@secnumfont
    \ifnum\pdfstrcmp{subsection}{#1}=0 \bfseries\fi% subsection # in \bfseries
    \csname the#1\endcsname
    \protect\@secnumpunct
  }%
}
\makeatother

%%%%%%%%%%%%%%%%%%%%%%%%%%%%%%%%%%%%%%%%%%%%%%%%%%%%%%%%%%%%%%%%%%%%%%%%%%%%%%%%%%%
%%%%%%%%%%%%%%%%%%%%%%%%%%%%%%%%%%%%%%%%%%%%%%%%%%%%%%%%%%%%%%%%%%%%%%%%%%%%%%%%%%%

\newenvironment{nouppercase}{%
  \renewcommand{\uppercasenonmath}[1]{}}{}

%%%%%%%%%%%%%%%%%%%%%%%%%%%%%%%%%%%%%%%%%%%%%%%%%%%%%%%%%%%%%%%%%%%%%%%%%%%%%%%%%%%
%%%%%%%%%%%%%%%%%%%%%%%%%%%%%%%%%%%%%%%%%%%%%%%%%%%%%%%%%%%%%%%%%%%%%%%%%%%%%%%%%%%

%\renewcommand{\baselinestretch}{1.01} 
\normalfont
\spaceskip=.8\fontdimen2\font
      plus .8\fontdimen3\font
     minus .8\fontdimen4\font
%\raggedbottom %------------------------------------------------------------------------------
\flushbottom

%%%%%%%% JR edits following GF's

\title[Landau-de Gennes corrections to the Oseen-Frank theory of nematic liquid crystals]{\Large Landau-de Gennes corrections to the Oseen-Frank theory \\ of nematic liquid crystals}
\author{ {\sc G. Di Fratta}}
\address{{{ \small Giovanni Di Fratta},\,{{ \small Institute for Analysis \\
and Scientific Computing, TU Wien\\
Wiedner Hauptstra{\ss}e 8-10 \\
1040 Wien, Austria.}}}}

\author{{\sc J.M. Robbins}}
\address{{{\small Jonathan M. Robbins},\,{\small  School of Mathematics \\
University of Bristol \\
University Walk, Bristol \\
BS8 1TW, United Kingdom.}}}

\author{{\sc V. Slastikov}}
\address{{{\small Valeriy Slastikov},\,{ \small School of Mathematics \\
University of Bristol \\
University Walk, Bristol \\
BS8 1TW, United Kingdom.}}}

\author{{\sc A. Zarnescu}}
\address{{{\small Arghir Zarnescu}, \,{\small  IKERBASQUE, Basque Foundation for Science, Maria Diaz de Haro 3,
48013, Bilbao, Bizkaia, Spain.}} {  BCAM,  Basque  Center  for  Applied  Mathematics,  Mazarredo  14,  E48009  Bilbao,  Bizkaia,  Spain.}{{ ``Simion Stoilow" Institute of the Romanian Academy, 21 Calea Grivi\c{t}ei, 010702 Bucharest, Romania.} }}
%\author{ G. Di Fratta, J.M. Robbins, V. Slastikov, A. Zarnescu}
%\date{}% Activate to display a given date or no date

%\DeclareSymbolFont{eulargesymbols}{U}{zeuex}{m}{n}
%\DeclareMathSymbol{\intop}{\mathop}{eulargesymbols}{"52}

\DeclareMathOperator{\diag}{diag}

%%%%%%%%GDF Definitions
\def\Vc#1#2{V_{#2}}
\def\pseudomin{{almost-minimising}}

\def\NN{{\mathbbmss N}}
\def\ZZ{{\mathbbmss Z}}
\def\RR{{\mathbbmss R}}

\def\CC{{\mathbb C}}
\def\Sphere{{\mathbb S}}
%\def\bR{{\mathbb R}}
%\def\bN{{\mathbb N}}
%\def\bN{{\mathbb N}}
%\def\Ss{{\mathbb S}}
%\newcommand{\R}{\mathbb{R}}
%\newcommand{\C}{\mathbb{C}}
%\newcommand{\Z}{\mathbb{Z}}
%\newcommand{\N}{\mathbb{N}}

%%%%%%%%

\newcommand{\myrho}{\rho}

\newcommand{\sgn}{\operatorname{sgn}}
\renewcommand{\Re}{\operatorname{Re}}
\renewcommand{\Im}{\operatorname{Im}}

\newcommand\half{{\textstyle\frac12}}
\newcommand\third{{\textstyle\frac13}}

\newcommand\grad{{ \nabla}}

\newcommand{\assign}{:=}

\newcommand\SO{\textup{SO}}

\newcommand{\mcN}{{\mycal{N}}}

\def\nor{{\bot}}

\def\e{{\varepsilon}}

\def\mcA{{\mycal A}}
\def\mcB{{\mycal B}}
\def\mcF{{\mycal F}}
\def\mcG{{\mycal G}}

\def\mcH{{\mycal H}}
\def\mcN{{\mycal N}}

\def\W{{\mycal W}}

\def\mcA{{\mycal A}}

\def\eps{{\varepsilon}}

\newtheorem{theorem} {Theorem} [section]
\newtheorem*{theorem*}{Theorem}

\newtheorem{lemma} {Lemma}[section]
\newtheorem{proposition} {Proposition}[section]

\theoremstyle{remark}
\newtheorem{definition}{\sc  Definition\rm}[section]
\newtheorem{remark}{\sc  Remark\rm}[section]

\newcounter{marnote}

\def\D2{\grad^{(2)}}

\DeclareFontFamily{OT1}{rsfs}{}
\DeclareFontShape{OT1}{rsfs}{m}{n}{ <-7> rsfs5 <7-10> rsfs7 <10-> rsfs10}{}
\DeclareMathAlphabet{\mycal}{OT1}{rsfs}{m}{n}

\def\tdeg{{\rm deg\,}}

\def\be{\begin{equation}}
\def\ee{\end{equation}}

\def\tr{{\rm tr}}

\def\mcS{{\mycal{S}}}
\def\mcU{{\mycal{U}}}
\def\mcE{{\mycal E}}

%Arghir's macros
\def\be{\begin{equation}}
\def\ee{\end{equation}}
\def\bea#1\eea{\begin{align}#1\end{align}}

\numberwithin{equation}{section}

\newcommand{\tens}{\otimes}

\usepackage{hyperref}

\begin{document}

%Oseen-Frank.  Equal constants = Harmonic map problem
%LDG.  Mesoscopic resolution of OF.  
%Relation between the theories.  (MZ, NZ)

%Here, 2d.  
%Leading-order correction to OF from LdG as explicit formula.
%Also, b = 0.

%Application.
%	- Lower bound for HM energy in terms of degree
%	- Achievement of lower bound.  Conformal bcs
%	- First correction

\begin{abstract}

We study the asymptotic behavior of the minimisers of the Landau-de Gennes model for nematic liquid crystals in a two-dimensional domain in the regime of small elastic constant.  At leading order in the elasticity constant, the minimum-energy configurations can be described by the simpler Oseen-Frank theory. Using a refined notion of $\Gamma$-development we recover Landau-de Gennes corrections to the Oseen-Frank energy. We provide an explicit characterisation of minimizing $Q$-tensors at this order in terms of optimal Oseen-Frank directors and observe the emerging biaxiality. We apply our results to distinguish between optimal configurations in the class of conformal director fields of fixed topological degree saturating the lower bound for the Oseen-Frank energy. 

\end{abstract}	
\begin{nouppercase}
\maketitle
\end{nouppercase}
%{\tiny{\tableofcontents}}
\vspace{-0.5cm}
\section{Introduction}

Nematic liquid crystals are the simplest liquid crystalline phase as well as the most widely used in  applications.
Among 
the theoretical models for nematic liquid crystals, the most prevalent in the physics and mathematics literature are the Oseen-Frank \cite{Frank1958liquid} and  Landau-de Gennes theories \cite{DeGennes1995}. The Oseen-Frank theory is the simpler of the two, but fails to describe several characteristic features of nematic liquid crystals, including the isotropic-nematic phase transition, non-orientability of the director field, and the fine structure of  defects. By incorporating additional degrees of freedom, the Landau-de Gennes theory accounts for these features, but is more difficult to solve and analyse.

The main focus of this paper is to establish a fine relation between the two theories, in the weak-elasticity regime and for two-dimensional domains. Employing a refined notion of $\Gamma$-development we obtain an approximate expression for Landau-de Gennes minimisers in terms of Oseen-Frank minimisers accurate to energies through the first two orders in the elasticity constant.  The results are applied to a family of boundary conditions of fixed topological degree which saturate a lower bound on the  leading-order Oseen-Frank energy.  For these boundary conditions, we provide explicit solutions in terms of the Green's function for the Laplacian on the domain, and show that the degeneracy in the Oseen-Frank energy is lifted at the next order.   Below we introduce both theories and discuss the mathematical status of their relationship together with the results of this paper.

\subsection{Landau-de~Gennes and Oseen-Frank theories of liquid crystals}
In the Oseen-Frank theory, the liquid crystalline material is assumed to be in the nematic phase.  %(we do not consider cholesterics here). 
Its configuration in a domain $\Omega\subset\RR^d$, $d=2$ or $d=3$,  is described by a unit-vector field $n:\Omega\to \Sphere^2$, %\mathbb{S}^2$, 
called the {\it director field}, which represents the mean orientation of the rod-like constituents of the material and  characterises its optical properties.  In the absence of external fields, the director field is taken to be a minimiser of the Oseen-Frank energy,
\begin{equation}
\label{eq: OF energy }
\mcE_{OF}[n] = \int_\Omega K_1|\nabla\cdot n|^2+K_2 |n\cdot(\nabla\times n)|^2+K_3|n\times (\nabla\times n)|^2, %+(K_2+K_4)\left[\textrm{tr}(\nabla n)^2-(\nabla\cdot n)^2\right]
\end{equation}
subject to Dirichlet boundary conditions $n|_{\partial \Omega} = n_b$, where the $K_j$'s are material-dependent constants.  For mathematical analysis, the one-constant approximation, $K_1 = K_2 = K_3$, is often adopted, according to which the Oseen-Frank energy reduces to the Dirichlet energy, with harmonic maps as critical points.

One shortcoming of this description is that in certain domains, the director field  $n$ is more appropriately represented by an $\RR P^2$-valued map, 
stemming from the fact that 
orientations $n$ and $-n$ are physically indistinguishable.  In simply-connected domains, a continuous $\RR P^2$-valued map $n$ can be lifted to a continuous $\Sphere^2$-valued map,  in which case we say that $n$ is {\it orientable}.  However, in non-simply-connected domains, this may not hold, in which case we say that $n$ is {\it non-orientable}; see  \cite{BaZa11} for further discussion, where the notion of orientability is extended to  $n \in W^{1,p}(\Omega, \RR P^2)$.

Another difficulty is the description of defect patterns.   These are singularities in the director field, which correspond physically to sharp changes in orientational ordering on a microscopic length scale. It is well known that boundary conditions can force the director field to have singularities.  This occurs, for example, when $\Omega$ is a three-dimensional domain with boundary homeomorphic to $\Sphere^2$ and the boundary map $n_b:\partial \Omega\rightarrow \Sphere^2$ has nonzero degree.  In this case, in spite of the singularity, the infimum Oseen-Frank energy is finite.   The difficulty is more acute when %\st{the domain is not simply connected and}  
the boundary data $n_b:\partial \Omega \rightarrow \RR P^2$ 
is non-orientable.   In this case, the Oseen-Frank energy is necessarily infinite.

The Landau-de Gennes theory resolves these difficulties by introducing additional degrees of freedom. The liquid crystalline material is described by a tensor field $Q:\Omega\to\mcS_{0}$ taking values in the five-dimensional space of 
 $3\times 3$ real symmetric traceless matrices, or $Q$-tensors, denoted
\begin{equation}\label{eq:Qtensorsdef}
\mcS_0=\big\{Q\in \RR^{3\times3}\, :\,  Q=Q^t,\;\; \tr \, Q =0\big\},
\end{equation}
 where $Q^t$ and $\tr \, Q$ denote  the transpose and trace of $Q$ respectively. The $Q$-tensor originates from a microscopic description; it represents the second  (and lowest-order nontrivial) moments of a probability distribution on the space of single-particle orientations, $\Sphere^2$, given that orientations $n$ and $-n$ are equally likely~\cite{DeGennes1995}.

The $Q$-tensor field is taken to be a minimiser of an energy comprised of elastic and bulk terms,
\begin{equation}
\mcF[Q] = %\mcF[Q;\Omega] =
  \frac{L}{2} \int_{\Omega} |\nabla Q|^2 +\int_{\Omega} f_{bulk}(Q), \qquad Q \in H^1(\Omega, \mcS_0),
	\label{Eq:LdG1}
\end{equation}
where $L$, the elastic constant, is a material parameter.  
%We note that the elastic energy density,
%\be 
%\frac{L}{2} |\nabla Q|^2 := \frac{L}{2}\sum_{j=1}^d \tr\left( \partial_j Q \partial_j Q\right),
%\ee
%is the  one-constant version of a more general expression (see, e.g., \cite{kitavtsev2016liquid}). 
For $f_{bulk}$ smooth and sufficiently regular boundary conditions, standard results from the calculus of variations  %and regularity theory %for elliptic PDEs 
imply that $\mcF$ has a smooth minimiser; singularities are absent in the Landau-de Gennes theory.
The bulk potential is required to be invariant under rotations $Q\mapsto RQR^t$, $R \in \SO(3)$,  and is usually taken to be of the form introduced by de Gennes, \footnote{More general bulk potentials $g(\tr \, Q^2, \tr \, Q^3)$ have been studied in the literature; see, e.g., \cite{ball2010nematic,fatkullin2005critical}.  We expect the results presented here to apply more generally to bulk potentials with a unique minimiser (modulo rotations) which is nondegenerate and uniaxial.}
\be
f_{bulk}(Q) = \frac{A}{2} \tr \, Q^2 - \frac{B}{3} \tr \, Q^3 + \frac{C}{4} (\tr \, Q^2)^2 .
\ee
Here $A$, $B$, and $C$, are material parameters, possibly temperature-dependent, with $C > 0$.
From now on we will assume without loss of generality that the coefficients $L,A,B$, and $C$, are non-dimensional;  see, for example, \cite{Gartland2015scalings} and the appendix of \cite{Nguyen13} for suitable non-dimensionalisations. We will focus on the generic case $B\not=0$ but also discuss some aspects of the case $B=0$.
%\marginnote{JR: We should make it clear that the results hold for general potentials with a uniaxial minimum.  The first-order correction depends only on $s_+$, the uniaxial order parameter at the minimum, and the curvatures (nonzero eigenvalues of the Hessian) at the minimum.  These are physically meaningful parameters, independent of the overall shape of the potential.} %XXXX

 In the class of spatially homogeneous $Q$-tensors the equilibrium configurations correspond to the minimisers of $f_{bulk}$.  For $A>0$,  the zero $Q$-tensor %$Q=0$ 
 is a local minimiser, %of $f_{bulk}$,  
 and becomes a global minimiser for $A$  sufficiently large.  %The zero $Q$-tensor
 The zero $Q$-tensor corresponds to the isotropic, or orientationally disordered, phase.  For $A < 0$, the minimisers of $f_{bulk}$ are, generically, a two-dimensional manifold within the larger class of  {\it uniaxial} $Q$-tensors, i.e., $Q$-tensors with a doubly degenerate eigenvalue.
 % where the degenerate eigenvalue has a specific value given in terms of  $A$, $B$ and $C$.  
 By identifying $n$, the normalised eigenvector  orthogonal to the degenerate eigenspace, as the director, %(note that $n$ is determined up to a sign)
 uniaxial $Q$-tensors correspond to the nematic phase as described within the Oseen-Frank theory.   With $A$ regarded as temperature-dependent, the Landau-de Gennes theory is seen to encompass the observed isotropic-nematic phase transition. 
 
The sign of the degenerate eigenvalue of a uniaxial $Q$-tensor %is of particular importance, as it 
coincides with the sign of $B$, and distinguishes two qualitatively different phases.  In terms of the probabilistic interpretation of the $Q$-tensor,
 a positive value of the degenerate eigenvalue corresponds to an ensemble of orientations predominantly orthogonal to the director $n$; this is the {\it oblate uniaxial} phase.    A negative value corresponds to an ensemble of orientations predominantly parallel to  $n$; this is the {\it prolate uniaxial} phase, which describes typical nematic liquid crystals.   %It is straightforward to show that 
%For the uniaxial minimisers of $f_{bulk}$, the sign of the degenerate eigenvalue  coincides with the sign of $B$.
Since our focus is on the nematic phase,  we take $A = -a^2 < 0$, $B = -b^2 < 0$ and $C = c^2 > 0$. The set of minimisers  of 
$f_{bulk}$, which we call the {\it limit manifold}, is given by
\begin{equation} \label{def:limmanifold}
\mcS_*:=\textstyle \left\{Q\in\mcS_0;\ \ Q=s_+\left(n\otimes n -\frac13 I \right), \ \ n \in \Sphere^2\right\},
\end{equation}  
where \be \label{def:s+}
s_+ = \frac{b^2+ \sqrt{ b^4 + 24a^2c^2}}{4c^2}.
\ee The limit manifold is homeomorphic to the real projective plane $\RR P^2$. In the non-generic case $b^2=0$ we have that the limit manifold  is given by 
\be \label{eq:limmanifold4-}
\mcS_0^*:=\{Q\in \mcS_0; |Q|^2= (2/3) s_+^2=a^2/c^2\},
\ee which is homeomorphic to $\Sphere^4$.

The minimum of the bulk energy is given by
\begin{equation}
f_* := f_{bulk}(\mcS_*) = -\frac{a^2}{3} s_+^2 - \frac{2b^2}{27} s_+^3 + \frac{c^2}{9} s_+^4.
\end{equation}
%In what follows, 
It is convenient to replace $f_{bulk}$ by
\be\label{eq:fb}
\tilde f_{bulk} = f_{bulk} - f_*,
\ee
so that $\tilde f_{bulk}(Q) \geqslant 0$ with $\tilde f_{bulk}(Q) = 0$ if and only if $Q\in \mcS_*$. 

%%%%NOTES:

%%% length scale sqrt(f_min/L). domains much larger than this.  save boundary conditions till you need them.

\subsection{State of the art} The Landau-de Gennes theory is usually applied to a system  in which the elastic constant $L$ can be treated as a small parameter.  This is the case when the size of the domain is much larger than a characteristic microscopic length scale
(see, for example, \cite{Gartland2015scalings} and the appendix of \cite{Nguyen13}).   With such systems in mind,  we write $L = \eps^2 \ll 1$ and rescale the energy \eqref{Eq:LdG1} to obtain
\begin{equation}
\mcE_\e [Q] = \int_{\Omega}  \frac{1}{2}|\nabla Q|^2 + \frac{1}{\eps^2} \tilde f_{bulk} (Q)\, , \qquad Q \in H^1(\Omega, \mcS_0)\,,
	\label{Eq:LdG2}
\end{equation}
so that deviation from the limit manifold is penalised. We restrict to differentiable boundary conditions taking values in the limit manifold,
\be\label{bc}
Q_b = Q|_{\partial\Omega} \in C^{1}(\partial\Omega, \mcS_*) ;
\ee 
indeed, boundary conditions violating this restriction induce a boundary layer of width $\eps$.
%\marginnote{JR: want to emphasize that we are assuming $C^1$-boundary data from the start, so we don't have to say it again later.}
We say that the boundary conditions are {\it orientable} if
\begin{equation}\label{eq: orientable bcs}
\textstyle Q_b =  s_+\left(n_b\otimes n_b-\frac 13 I\right), \ \text{ where } n_b \in C^{1}(\partial\Omega , \Sphere^2).
\end{equation}

It is in the small-$\eps$ regime that the relationship between the Landau-de Gennes and Oseen-Frank theories emerges.  For orientable boundary conditions, if we formally take $\eps = 0$, the Landau-de Gennes energy \eqref{Eq:LdG2} becomes 
\begin{equation}
\label{eq: Gamma}
 \mcE_0 [Q] = \begin{cases} 
\displaystyle \frac{1}{2} \int_{\Omega}  |\nabla Q|^2 &  \text{if } Q \in H^1(\Omega, \mcS_*),\\
+\infty &\text{otherwise}.
\end{cases}
\end{equation} 
Provided %the boundary conditions are orientable and 
the domain is simply-connected, given $Q \in H^1(\Omega, \mcS_*)$, there exists $n\in H^1(\Omega, \Sphere^2)$ such that $Q(x)=s_+(n(x)\otimes n(x)-\frac{1}{3}I)$. % and $|\nabla Q|^2=2s_+|\nabla n|^2$.   
In this case, the limiting energy $\mcE_0 [Q]$ can be expressed  in terms of the director field as
\be \label{eq: Oseen Frank}
 \mcE_{OF} [n]=s_+^2\int_\Omega |\nabla n(x)|^2,
\ee
which is, up to a multiplicative constant, the one-constant Oseen-Frank energy.

There has been much recent work in the mathematics literature analysing the relationship between the two theories in the limit $\eps \rightarrow 0$.   For three-dimensional domains with orientable boundary conditions, it was  shown in \cite{Majumdar10} %\marginnote{VS: maybe add some recent results on several elastic constants}  
that global minimisers $Q_\e \in H^1(\Omega,\mcS_0)$ 
of $\mcE_\e$ converge to global minimisers $Q_0\ = s_+(n_0\otimes n_0 - \third I) \in H^1(\Omega,\mcS_*)$ 
of $\mcE_0$.  Moreover, outside a finite set of point singularities %, which constitute the defects 
of the one-constant Oseen-Frank director $n_0$, the convergence  holds in strong norms on compact sets.    These results were extended in \cite{Canevari2017line} to the case of non-orientable boundary conditions; the principal new features are (\emph{i}) the Landau-de Gennes energy is logarithmically divergent  in $\eps$, (\emph{ii}) the singular set contains one-dimensional curves as well as isolated points, and (\emph{iii}) the limit map $Q_0$ is described by an $\RR P^2$-valued harmonic map rather than an $\Sphere^2$-valued harmonic map.  Results for two-dimensional domains with more general boundary conditions and assumptions on the behaviour of the energy are given in \cite{Bauman2012analysis, Canevari15biaxiality, Golovaty2014minimizers}.

Given the leading-order behaviour of the Landau-de Gennes minimisers away from singularities, one can pursue two distinct directions.  The first concerns the behaviour of a minimiser $Q_\eps$ 
near the singular set, where deviations from $Q_0$ are no longer small.  This amounts to analysing the  profiles of point and line defects, %both of which are missing from the Oseen-Frank theory, 
%and is 
an active area of research \cite{Bauman2012analysis, Canevari15biaxiality, Canevari2017line, canevari2016radial,canevari2015morse, canevari2016defects,  Fratta16, Golovaty2014minimizers, Henao17, INSZ_new, Ignat2015binstability, Ignat2015stability, Ignat20162dstability, kitavtsev2016liquid}.
%[REFERENCES]. %XXXXXX

The second concerns the structure of deviations  $Q_\eps - Q_0$ away from the singular set.  Formal asymptotics suggest  that $Q_{\eps}\sim Q_{0}+\eps^2 P_{\eps}$, where $P_{\eps}$ is $O(\eps^0)$.  This question was addressed in \cite{Nguyen13} for three-dimensional domains with orientable boundary conditions. %through an analysis of the Euler-Lagrange equation.  
Subject to 
rather restrictive conditions on $Q_0$ (which in particular exclude defects), %the limiting harmonic map, 
it was shown that $P_\eps$ approaches a limiting map $P_0 \in C_{\mathrm{loc}}^\infty(\Omega, { \mcS_0}) \cap H^{s}(\Omega, { \mcS_0})$ for any $0 < s < 1/2$.  Moreover, $P_0$ splits naturally into a sum $P_0^\bot + P_0^\top$, where $P_0^\top$ takes values in the two-dimensional tangent space $T_{Q_0}\mcS_*$ of $\mcS_*$ at $Q_0$, and  $P_0^\bot$ %the other taking values in 
takes values in
the three-dimensional orthogonal complement of $T_{Q_0}\mcS_*$. The transverse component $P_0^\bot$ is given by an explicit expression involving $Q_0$ and its derivatives, while $P_0^\top$ is shown to satisfy a linear inhomogeneous PDE.
\subsection{Contributions of  present work} 
Our results also pertain to corrections to $Q_\eps$ away from the singular set, and complement those of \cite{Nguyen13}. %while they do not provide as much information, they are also simpler and stronger.   
%in some ways they are more restricted, and in others they are more general.  
Specifically, we consider simply-connected two-dimensional domains with orientable boundary conditions \eqref{eq: orientable bcs} for which the boundary director $n_b$ is planar, i.e., $n_b \cdot e_3 = 0$.   
 By identifying the boundary $\partial \Omega$ with $\Sphere^1$, we may regard a planar boundary director $n_b$  as a map from $\Sphere^1$ to itself, which therefore may be assigned an integer-valued degree, $m$.  We consider the case of nonzero degree.
%has normal component  which does not change sign; i.e., either $n_b\cdot e_3 \geqslant 0$ or $n_b\cdot e_3\leqslant 0$.  We call such boundary conditions {\it sign-definite}.  A special case is {\it planar} boundary conditions, for which $n_b\cdot e_3 = 0$.   %(A special case in which non-orientable boundary conditions are admitted is discussed below).
We use energy-based methods 
 %while \cite{Nguyen13} analyses the associated Euler-Lagrange equation.  
 %Therefore, our results apply only to global minimisers, not all critical points.
to derive an explicit formula for the transverse component of the first-order correction. While we obtain only bounds for the tangential component, and not the linear PDE that it satisfies,   %; these determine corrections to the eigenvalues of the $Q$-tensor as well the leading-order orientation of its subprincipal eigenvectors.  
%we do not derive the linear PDE satisfied by tangential components.   
 we are able to relax the restrictive assumptions on $Q_0$ in \cite{Nguyen13}.
%, which were established only in rather limited cases, for example near-uniform boundary conditions.  
%Here, the assumptions on the boundary conditions are natural and physically motivated; we take $Q_b = s_+( n_b \otimes n_b  - \third I) \in \mcS_*$ where the boundary director $n_b$ is required to lie in the plane of $\Omega$ (regard $\Omega$ as embedded in $\RR^3$).  Thus, $n_b$ may be regarded as a map from $\partial \Omega$ into $S_1$ and may be assigned a degree.       and of integer degree (but see below for a special case where half-integer degree can also be treated).  
Also, the $\Gamma$-convergence argument is much simpler than the PDE analysis %of the Euler-Lagrange equation in
of \cite{Nguyen13}, and has  potential  further application to dynamics in terms of a corrected Oseen-Frank energy for the gradient flow. 
 
 Most importantly, the variational analysis  brings to light a physically significant difference between the energies associated with the transverse and tangential  components of $P_0$.  The transverse component, which affects the bulk potential, contributes to the Landau-de Gennes energy at $O(\eps^2)$, while the tangential component, which affects only the elastic energy, contributes  at higher order. % -- indeed, from the results of \cite{Nguyen13}, at $O(\eps^4)$.   
%It follows from our analysis that % this implies that 
This observation suggests that the 
transverse component $P_0^\bot$  assumes the same form for a  wide class of $Q$-tensor models in which  the Oseen-Frank theory provides the leading-order description.  Insofar as  the Landau-de Gennes model  is necessarily approximate, this suggests that the transverse component of the $Q$-tensor, while small, is robust under  perturbations; an additional $O(\eps^\delta)$ contribution to the energy produces an  $O(\eps^\delta)$ correction to $P_0^\perp$.
The tangential component   lacks this robustness; an $O(\eps^\delta)$  perturbation typically produces an $O(\eps^{-2+\delta})$ deviation in   $P_0^\top $ (cf.~Remark~\ref{rmk:discussion}).

The additional information contained in the transverse component $P_0^\nor$ is manifested through the resolution (cf.~Remark~\ref{rmk:1stbiax})
\begin{equation}\label{eq: normal component expansion}
P^{\nor}_0 = c_0 Q_0 +c_1  (p_0 \otimes p_0 - q_0 \otimes q_0)+ c_2 (p_0 \otimes q_0 + q_0 \otimes p_0)  ,
\end{equation} 
where $p_0$, $q_0$  constitute an orthonormal basis for the plane perpendicular to the director $n_0$, and $Q_0 = s_+ (n_0 \otimes n_0 - \third I)$.  The $c_0$-term preserves the eigenvalue degeneracy in $Q_0$, and can be regarded as a correction to $s_+$.  The $c_1$- and $c_2$-terms produce a qualitative change in the $Q$-tensor; they break the eigenvalue degeneracy and thereby introduce {\it biaxiality}.   The difference between the two negative  eigenvalues of $Q_\eps$ can be regarded as a measure of biaxiality,  and is given to leading order by $\eps^2(c_1^2 + c_2^2)^{1/2}$,  while the orientation of the associated eigenvectors in the plane orthogonal to $n_0$ is determined by $c_2/c_1$.   It has previously been established that a critical point of the Landau-de Gennes energy is either everywhere uniaxial or else almost everywhere biaxial \cite{Majumdar10, lamy2015uniaxial}. 
The results presented here make this statement quantitative.  

%\st{To summarise, our main result provides an approximate solution of the Landau-de Gennes theory in terms of the solution of the simpler Oseen-Frank theory in two-dimensional domains for planar boundary conditions in the physically relevant weak-elasticity regime.  The approximation captures the most physically significant corrections provided by the Landau-de Gennes theory, including biaxiality. }

\
Our principal application  is to a special class of planar boundary conditions.  %By identifying the boundary $\partial \Omega$ with $\Sphere^1$, we may regard a planar boundary director $n_b$  as a map from $\Sphere^1$ to itself, which therefore may be assigned an integer-valued degree, $m$.  
A standard argument establishes the lower bound  $2\pi |m|$ for the Dirichlet energy of an $\Sphere^2$-valued harmonic map $n$ with degree-$m$ planar boundary conditions.  The lower bound is achieved for  a special family of boundary conditions, which are parameterised  by  $|m|$ arbitrarily located escape points $(a_1,\ldots,a_{|m|}) \in \Omega^{|m|}$ where the director field is vertical, i.e., $n(a_j) = \pm e_3$.  The director field $n$ is conformal with $n\cdot e_3$ sign-definite.  %These boundary conditions are associated with  conformal director fields with sign-definite vertical component.  
Conformal director fields
 %$Q_0$ 
 may be expressed explicitly in terms of the Green's function for the Laplacian on $\Omega$.   The associated textures  are seen to be similar to the well-known Schlieren patterns observed in liquid crystal films (see Figure~\ref{fig:Schlieren}).
%\marginnote{JR: There was a mistake in the formulation of this  statement, now corrected.  Conformal directors have energy $2\pi (|m| + 2q)$, where $q$ is the number of pairs of escape points with opposite signs.  The lower bound is achieved only when $q = 0$, i.e., when $n = e_3$ at every escape point, or $n = -e_3$ at every escape point.   There are some related changes in Section 6.}
%
% which are parameterised  by  $|m|$ arbitrarily located escape points $(a_1,\ldots,a_{|m|}) \in \Omega^{|m|}$ where the director is vertical, i.e. $n(a_j) = \pm e_3$.   For conformal boundary conditions, $Q_0$ may be expressed explicitly in terms of the Green's function for the Laplacian on $\Omega$.   {\color{blue} The associated textures  are seen to be similar to the well-known Schlieren patterns observed in liquid crystal films (see Figure~\ref{fig:Schlieren}).}

The degeneracy in the Oseen-Frank energy among these special boundary conditions is lifted by the first-order correction from the Landau-de Gennes energy.  The expression for the first-order correction simplifies in the conformal case, and is proportional to the integral of $|\nabla Q_0|^4$.  Regarded as a potential on $\Omega^{|m|}$,  the first-order energy favours escape points moving to the boundary.  This is illustrated  in the case of the two-disk, for which closed-form expressions are obtained.

For the special case $b^2 = 0$ (as well as more general bulk potentials depending only on $\tr\, Q^2$),  our results can be extended to non-orientable boundary conditions. 
In this case, the minimising set of $f_{bulk}$ is larger than $\mcS_*$; it contains all $Q$-tensors with specified trace norm, and may be identified with $\Sphere^4$.  For finite $\eps$, the Landau-de Gennes energy is equivalent to a Ginzburg-Landau  functional on $\RR^5$-valued maps, which in the $\eps\rightarrow 0$ limit becomes the Dirichlet energy for $\Sphere^4$-valued maps.    For both orientable and non-orientable planar boundary conditions, there is a unique minimising $\Sphere^4$-valued harmonic map  (in the orientable case, it is distinct from the $\mcS_*$-valued minimisers of \eqref{eq: Gamma}), and 
the first-order correction can be expressed in terms of it.  The $\Gamma$-convergence argument is simpler than in the $b^2 > 0$ case.  

%\footnote{For Jonathan, Can you put some comment on the Schlieren textures here in the introduction?. \color{blue}  JR: See paragraph above on conformal bc's}
\subsection{Outline}
The remainder of the paper is organised as follows. In  Section~\ref{sec:math} 
%and ~\ref{sec: b = 0 results}
 we state and discuss our main results on the Landau-de Gennes corrections to the Oseen-Frank energy for the non-degenerate case $b^2 \neq 0$. The proof of the $\Gamma$-development result   (cf. Theorem~\ref{thm:gexp2}) is given in Section~\ref{sec:thm2.1}. In Section~\ref{sec: b = 0 results}, we state and prove Theorem~\ref{prop:zero0 b=0}, which deals with the degenerate case $b^2=0$ and allows for non-orientable boundary conditions. 
  Finally, in Section~\ref{sec: conformal n},  we apply our results to distinguish between optimal configurations in the class of conformal director fields of fixed topological degree that saturate a lower bound on the Oseen-Frank energy.

\section{Statement of main results}
\label{sec:math}
%\subsection{Mathematical preliminaries}

We are interested in studying the minimisers of the Landau-de Gennes energy $\mcE_\e$ in the physically relevant regime $\eps \ll 1$ for the generic case $b^2 > 0$.
Throughout we assume that the domain $\Omega \subseteq \RR^2$ is bounded and simply connected  with $C^1$-boundary.
We consider orientable  planar boundary conditions  with director $n_b \in  C^1(\partial\Omega, \Sphere^2)$, so that $n_b\cdot e_3 = 0$; results for non-orientable planar boundary conditions in the special case $b^2 = 0$ are presented in Section~\ref{sec: b = 0 results}. 

Identifying the space of unit vectors orthogonal to $e_3$ with $\Sphere^1$, and likewise  identifying the domain boundary $\partial \Omega$ with $\Sphere^1$, we may regard 
$n_b$ 
as a map from $\Sphere^1$ to itself, which may be assigned an integer-valued degree. 
%(see \cite{Boutet91} for the definition of degree for $H^{\frac{1}{2}}$-maps). 
Given $n_b$ of nonvanishing degree,
we denote by $\mcU$ the class of admissible $Q$-tensor fields,
\be \label{def:U admissible}
\mcU := \left\{ Q \in H^1(\Omega, \mcS_0), \, \left. Q\right|_{\partial \Omega} =Q_b \right\} \, , \quad Q_b := s_+ \left(n_b \otimes n_b - \third I\right).
\ee
%denote the set of admissible $Q$-tensor fields.
We consider the minimisation problem (cf. \eqref{Eq:LdG2})
\be\label{main_problem}
\min_{Q\in \mcU}  \mcE_\e[Q] = \min_{Q\in \mcU} \int_{\Omega}  \frac{1}{2}|\nabla Q|^2 + \frac{1}{\eps^2} \tilde f_{bulk} (Q)\, .
\ee
As a first step, we need to understand the behaviour of Problem \eqref{main_problem} in the limit $\e \to 0$.  Using methods of $\Gamma$-convergence we obtain the following result, whose proof is standard and therefore omitted.

\begin{proposition} \label{prop:zero}
As $\e \to 0$, the following statements hold:
\begin{enumerate}[leftmargin=1cm,label=\textup{(}\roman*\textup{)},widest = i]
\item  For any family $\{Q_\e\}_{\e>0} \subset \mcU$ such that $\mcE_\e[Q_\e]\leqslant C$  we have, possibly on a subsequence, $Q_\e \rightharpoonup Q$ weakly in $H^1(\Omega, \mcS_0)$  for some $Q \in H^1(\Omega, \mcS_*)$,  where $\mcS_*$ is the limit manifold defined by \eqref{def:limmanifold}.\smallskip
\item The family $(\mcE_\e)_{\e>0}$ $\Gamma$-converges to $\mcE_0$ in the weak topology of $H^1 (\Omega, \mcS_0)$, where
\begin{equation}
\label{eq: Gamma }
 \mcE_0 [Q] = \begin{cases} 
 \displaystyle \frac{1}{2} \int_{\Omega}  |\nabla Q|^2& \text{ if $Q \in H^1(\Omega, \mcS_*)\cap \mcU $},\\
+\infty &\text{otherwise} .
\end{cases}
\end{equation} 
%with $\mcS_*$ the limit manifold defined by \eqref{def:limmanifold}.
\smallskip
\item The minimisers $\{Q^*_\e\}_{\e>0}$ of the problem \eqref{main_problem} converge strongly in $H^1(\Omega, \mcS_0)$ to the minimisers of the following harmonic map problem:
\be \label{eq:problemH1}
\min_{Q\in \mcU_*}\mcE_0 [Q] \, ,
\ee 
{with  $ \mcU_* \equiv H^1(\Omega, \mcS_*)\cap \mcU$}.
\end{enumerate}
\end{proposition}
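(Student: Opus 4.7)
The plan is to prove the three parts in the natural order: first establish compactness of low-energy sequences (which will also be used in the $\Gamma$-liminf), then verify the $\Gamma$-liminf and produce a cheap recovery sequence, and finally combine these with the boundary trace continuity to get convergence of minimizers, upgrading the convergence from weak to strong by energy matching.

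\textbf{Compactness (part (i)).} Suppose $\mcE_\e[Q_\e]\leqslant C$ for $\{Q_\e\}\subset\mcU$. The elastic term alone gives $\|\nabla Q_\e\|_{L^2}\leqslant \sqrt{2C}$. Because every $Q_\e$ has the same $H^{1/2}$-trace $Q_b$ on $\partial\Omega$, a standard Poincaré-type inequality (or extending $Q_b$ to a fixed element of $H^1(\Omega,\mcS_0)$ and applying Poincaré to the difference) yields a uniform $H^1(\Omega,\mcS_0)$ bound. Hence there exists $Q\in H^1(\Omega,\mcS_0)$ with, up to a subsequence, $Q_\e \rightharpoonup Q$ weakly in $H^1$ and $Q_\e\to Q$ strongly in $L^2$ by Rellich. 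On the other hand, $\int_\Omega \tilde f_{bulk}(Q_\e)\leqslant C\e^2\to 0$, so by Fatou's lemma (and continuity of $\tilde f_{bulk}$) $\int_\Omega \tilde f_{bulk}(Q)=0$, forcing $Q(x)\in\mcS_*$ a.e. Finally, trace continuity of weak $H^1$ convergence gives $Q|_{\partial\Omega}=Q_b$, so $Q\in\mcU_*$.

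\textbf{$\Gamma$-convergence (part (ii)).} For the $\liminf$ inequality, suppose $Q_\e\rightharpoonup Q$ weakly in $H^1(\Omega,\mcS_0)$. If $\liminf\mcE_\e[Q_\e]=+\infty$ there is nothing to prove; otherwise the compactness argument above applies along a subsequence realising the $\liminf$, so $Q\in\mcU_*$ and
\begin{equation*}
\liminf_{\e\to 0}\mcE_\e[Q_\e]\;\geqslant\; \liminf_{\e\to 0}\tfrac{1}{2}\int_\Omega|\nabla Q_\e|^2 \;\geqslant\; \tfrac{1}{2}\int_\Omega|\nabla Q|^2 = \mcE_0[Q]
\end{equation*}
by weak lower semicontinuity of the Dirichlet energy. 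For the $\limsup$ inequality, since the boundary datum $Q_b$ already takes values in $\mcS_*$, every $Q\in\mcU_*$ is admissible for every $\e$; taking the constant recovery sequence $Q_\e\equiv Q$ yields $\tilde f_{bulk}(Q_\e)\equiv 0$, hence $\mcE_\e[Q_\e]=\mcE_0[Q]$ identically.

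\textbf{Convergence and strengthening to strong $H^1$ (part (iii)).} Existence of a minimiser $Q_\e^*$ of $\mcE_\e$ over $\mcU$ follows from the direct method. Testing with any $Q\in\mcU_*$ gives $\mcE_\e[Q_\e^*]\leqslant \mcE_0[Q]<\infty$, so the compactness from part (i) applies and, along any subsequence, $Q_\e^*\rightharpoonup Q^*$ weakly in $H^1$ with $Q^*\in\mcU_*$. The standard $\Gamma$-convergence argument (liminf plus recovery) shows $Q^*$ minimises $\mcE_0$ on $\mcU_*$ and $\mcE_\e[Q_\e^*]\to \mcE_0[Q^*]$. Since $\tilde f_{bulk}\geqslant 0$ and the Dirichlet energy is weakly lower semicontinuous, this energy convergence forces
\begin{equation*}
\tfrac{1}{2}\int_\Omega|\nabla Q_\e^*|^2 \;\longrightarrow\; \tfrac{1}{2}\int_\Omega|\nabla Q^*|^2,\qquad \tfrac{1}{\e^2}\int_\Omega\tilde f_{bulk}(Q_\e^*)\;\longrightarrow\; 0,
\end{equation*}
and the first limit, combined with $Q_\e^*\rightharpoonup Q^*$ in $H^1$, yields $\nabla Q_\e^*\to \nabla Q^*$ in $L^2$ by the usual Hilbert space argument ($\|Q_\e^*-Q^*\|_{H^1}^2$ expands and each term converges). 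As the whole original family has the same limit (the minimiser of $\mcE_0$ on $\mcU_*$ is unique up to the finite-dimensional ambiguity absorbed by passing to subsequences), the convergence holds without extraction. The only mildly delicate point is the trace identification $Q^*|_{\partial\Omega}=Q_b$, handled by continuity of the trace operator under weak $H^1$ convergence; otherwise the argument is entirely routine, which is why the authors omit it.
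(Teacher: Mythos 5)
Your proof is correct and is precisely the standard argument the paper alludes to when it declares the proof of Proposition \ref{prop:zero} ``standard and therefore omitted'': uniform $H^1$ bounds plus Poincar\'e and Fatou for (i), weak lower semicontinuity plus the constant recovery sequence (using that $Q_b$ is $\mcS_*$-valued) for (ii), and energy matching to upgrade weak to strong convergence for (iii). The only imprecision is your closing parenthetical in (iii): by Remark \ref{rem: unique minimiser Q_0} the limit problem \eqref{eq:problemH1} has exactly two minimisers $Q_0^{\pm}$, so the convergence of the full family without extraction does not follow; the correct (and intended) conclusion is that every subsequence has a further subsequence converging strongly in $H^1$ to one of $Q_0^{\pm}$, which is how the paper uses the result later.
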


\begin{remark}\label{rem: unique minimiser Q_0}
In \cite{sandier1994uniqueness} (see also \cite{INSZ_GL}), it is shown  that Problem \eqref{eq:problemH1}  has precisely two solutions,
\begin{equation} \label{eq:2minimisers}
Q_0^\pm = \textstyle s_+ \left( n_0^\pm \tens n^\pm_0 - \frac13 I \right),
\end{equation} 
% where $n^\pm_0 = (n_{01}, n_{02}, \pm n_{03})$, $n_{03}=0$ on $\partial\Omega$, and $n_{03} > 0$ in $\Omega$.   
where $n^\pm_0\cdot e_3=0$ on $\partial\Omega$, and $n_{0}^+\cdot e_3 > 0$ (resp~$n_{0}^-\cdot e_3 < 0$) in $\Omega$. %\st{ (We note that this result depends essentially on the boundary data having nonzero degree.)}\marginnote{JR: this seems the natural place to point out the reason for assuming $m \neq 0$. AZ: this is not the reason, but the necessity of using it in Lemma $4.2$; I striked out the comment}
 The vector field $n_0^\pm$ is a smooth  harmonic map with values in $\Sphere^2$ (see, for instance \cite{Helein2d}) and solves the following minimisation problem:%\marginnote{JR: Do we want to say at this point that $n_0$ is $C^\infty$?  Elsewhere we will use the fact that $n_0$ is in $W^{1,p}$ -- maybe this is the place to make the point.  Reference could be Helein or INSZ or both.}
\be \label{eq: min problem for n_0}
\min \left\{ \int_\Omega |\nabla n|^2 \, : \, {n \in H^1(\Omega, \Sphere^2),\  n= n_b \hbox{ on } \partial \Omega}\right\}.
\ee
From now on,  we set $n_0:=n_0^\pm$ and $Q_0:=Q_0^\pm$,  meaning that all the results we state hold for both $n_0^+$ and $n_0^-$.
\end{remark}

\subsection{A refined formulation of asymptotic $\Gamma$-expansion}\label{sec: Gamma convergence result}
The next  step in understanding the link between the Landau-de Gennes and Oseen-Frank theories is the 
asymptotic expansion of the Landau-de Gennes  energy $\mcE_\e$. Using the approach of $\Gamma$-expansion we can obtain a correction to the Oseen-Frank energy and quantify the difference between the two theories. Specifically, with {$n_0:=n_0^\pm\in C^\infty(\Omega,\Sphere^2)\cap C^1(\bar\Omega,\Sphere^2)$} minimising \eqref{eq: min problem for n_0}, we define the {\it renormalised relative energy}
\be\label{eq:Ge}
\mcG_{\e} [Q]:= \frac{1}{\e^2} \left( \mcE_\e[Q] - \mcE_0[Q_0] \right),  \ \  Q \in H^1(\Omega, \mcS_0),
\ee 
and proceed to investigate the  behaviour of minimisers of $\mcG_{\e} $ in $\mcU$. Before stating our main result about $\mcG_{\e}$, a few comments are in order.

The notion of $\Gamma$-expansion was introduced by Anzellotti and Baldo in \cite{anzellotti1993asymptotic}. Their framework permits to derive selection criteria for minimisers when the leading order $\Gamma$-limit manifests degeneracies in the energy landscape. However, our leading order $\Gamma$-limit $\mcE_0$ is not subject to this phenomenon as it admits just the two minimisers \eqref{eq:2minimisers}. This implies that the second-order $\Gamma$-limit will be infinite at every point but $Q_0^\pm$. No matter which (reasonable) topology is considered, the energy will blow up on families that do not converge to $Q_0^\pm$. In order to gain finer details on the convergence behaviour of the minimising sequences, a slightly different approach must be used. We proceed as follows:
\begin{itemize}
\item First, we observe that fairly extended arguments, that are nevertheless straightforward given the existing literature (see, for instance, \cite{Bethuel_1993,INSZ_new, Majumdar10, Nguyen13}), allow to show that if $\{Q^*_\eps\}_{\eps>0}$ is a family of \emph{minimisers} of $\mcE_\e$,  there exists an $\e_0>0$ such that 
\be\label{bd:uniformLipschitz}
\sup_{0<\eps<\eps_0} \|Q^*_\e\|_{W^{1, \infty} (\Omega, \mcS_0)}<\infty\, ,
\ee
and, possibly for a subsequence, $Q^*_\e \to   Q_0^\pm$ strongly in $H^1(\Omega,\mcS_0)$  with $ Q_0^\pm$ one of the two minimisers of problem \eqref{eq:problemH1}.
\smallskip
\item Next, we use \eqref{bd:uniformLipschitz} to deduce fine properties of the minimisers.  We consider all possible families $\{Q_\e\}_{\e>0}$ that behave in a similar way to minimising families (see Definition~\ref{def:am} for the precise formulation), and we provide a description of the limiting  energy capable of distinguishing  different sequences. Then, the second-order $\Gamma$-limit follows as a particular instance of our analysis.
\end{itemize}
The previous considerations motivate the following terminology.

\begin{definition} \label{def:am}  We say that a family $\{Q_\e\}_{\e>0} \subseteq \mcU$ is \emph{\pseudomin}
\  whenever $\{Q_\e\}_{\e>0}$ satisfies the uniform bound \eqref{bd:uniformLipschitz}, and $\mcG_{\e} [Q_{\e}] \leqslant C$ for some constant $C>0$ independent of $\e$.
\end{definition}

\subsection{Main result: the case $b^2>0$.}
Our main result provides detailed information about the expansion of the energy $\mcE_\e$ and is stated in the next Theorem \ref{thm:gexp2}. Before stating it we need to introduce some basic definitions, notation and terminology  that will be used throughout.

The set of $Q$-tensors, $\mcS_0$, is a five-dimensional linear space, with inner product $Q:P = \tr\, (QP)$.  The norm induced by the inner product is denoted by $|Q| := (\tr\, Q^2)^{\frac 12}$.
 It will be convenient to introduce the following orthonormal basis for $\mcS_0$:
\begin{equation}\label{eq:Fbasis}
\begin{gathered} \textstyle
       F_1 = \frac{1}{\sqrt2} \left( e_1 \otimes e_1 -  e_2 \otimes e_2\right), \;\;
\textstyle F_2 = \frac{1}{\sqrt2} \left( e_1 \otimes e_2 +  e_2 \otimes e_1\right),      \;\;  \textstyle    F_3 = \sqrt{\frac32} \left( e_3 \otimes e_3 - \frac13 I \right),       \\
\textstyle      F_4 =\frac{1}{\sqrt2} \left( e_1 \otimes e_3 +  e_3 \otimes e_1\right) , \quad F_5 = \frac{1}{\sqrt2} \left( e_3 \otimes e_2 +  e_2 \otimes e_3\right),
\end{gathered}
\end{equation} with $e_1,e_2,e_3$ standard basis of $\RR^3$.
A tensor $Q\in \mcS_0$ is called {\it biaxial} if all its eigenvalues are distinct. We say that 
$Q$ is {\it uniaxial} if it has a doubly degenerate eigenvalue $-\lambda/3$. In this case, it can be represented uniquely as
\be \label{eq: form of Q_0 for uniaxial}
\textstyle Q= \lambda \left(n\otimes n - \frac13 I\right),
\ee
where $n \in \Sphere^2$ is called the {\it director} and  $\frac23 \lambda \in \RR$ is the (unique) non-degenerate eigenvalue of $Q$. 
More specifically, $Q$ is {\it prolate uniaxial} if $\lambda>0$ and {\it oblate uniaxial} if $\lambda<0$.
Finally,
$Q\in \mcS_0$  is {\it isotropic} if it has a triply degenerate eigenvalue, in which case $Q = 0$. 
If the largest (necessarily positive) eigenvalue of $Q\in \mcS_0$ is nondegenerate, it is called the {\it principal eigenvalue}, and the associated normalised eigenvector is called the {\it principal eigenvector}.  The remaining two eigenvalues of $Q$ (which may be degenerate) and the associated orthonormal eigenvectors are called the {\it subprincipal eigenvalues} and {\it subprincipal eigenvectors}.

We introduce a parameterised family of rotations in $\SO(3)$.  For any $n \in \Sphere^2\setminus \{-e_3\}$, we define
\be\label{eq: R_n}
R_n = I + [e_3 \times n]_\times + \frac{[e_3 \times n]_\times^2}{1+ n \cdot e_3},
\ee
where, for every $\omega\in\RR^3$, the symbol $[\omega]_\times$ denotes the antisymmetric matrix that maps $v\in \RR^3$ to $\omega\times v$.  
It is easy to check that $R_n \in \SO(3)$ and that $R_n\, e_3 = n$.  Indeed, $R_n$ may be uniquely characterised as the rotation about an axis 
 orthogonal to $e_3$
 by an angle  $0 \leqslant\theta < \pi$ that maps $e_3$ into $n$. Note that, when $n\neq e_3$ the axis of rotation is $e_3 \times n$, and the angle of rotation is $\cos^{-1}(n\cdot e_3)$. 
 
\begin{remark} 
 Given a bounded domain $\Omega \subset \RR^2$, we  note the following:  for any $1\leqslant p\leqslant \infty$, if $n \in W^{1,p}(\Omega, \Sphere^2)$ and $1 + n\cdot e_3$ is bounded away from $0$, then $R_n \in W^{1,p}(\Omega, \SO(3))$.
\end{remark} 

In what follows, to shorten notation, we set $Q[n]:=s_+\left( n \otimes  n -\frac13 I \right)$ for any $n\in\Sphere^2$. Also, we set 
\be
\Vc{}{\myrho}:=\sum_{j=1}^3 \myrho_j F_j,\qquad \grad n_0 \otimes \grad n_0:=\sum_{i=1}^2  \partial_i n_0 \otimes  \partial_i n_0.
\ee
Here, $\myrho\in\RR^3$, and the $F_j$ are the first three elements of the basis \eqref{eq:Fbasis}. Note that any $\Vc{}{\myrho}$ has $e_3$ as eigenvector.
%where $F_j$ are the first three elements of the basis \eqref{eq:Fbasis}; note that any %$\Vc{}{\myrho}$ has $e_3$ as eigenvector. Finally, $\grad n_0 \otimes \grad n_0$ is a %shortened notation for $\sum_{i=1}^2  \partial_i n_0 \otimes  \partial_i n_0$.

\begin{theorem}\label{thm:gexp2} %\ \\ \vspace{-.3cm}
As $\e \to 0$, the following assertions hold:
%Let   $Q_0$ be a minimiser of $\mcE_0$ over $\mcU$ as  in  problem \eqref{eq:problemH1}. Then:
\begin{enumerate}[leftmargin=1.1cm,label=\textup{(}\roman*\textup{)},widest = i]
\item\label{thm2.1i} %{\bf (Compactness)}  
For any family $\{Q_\e\}_{\e>0} \subseteq \mcU$ such that  $\mcG_{\e} [Q_{\e}] \leqslant C$ we have, possibly on a subsequence, $Q_{\e} \to   Q_0^\pm:=s_+\left( n_0^\pm \otimes  n_0^\pm -\frac13 I \right)$ strongly in $H^1(\Omega,\mcS_0)$,
where $ Q_0^\pm$ is one of the two minimisers of problem \eqref{eq:problemH1}.
\smallskip
\item\label{thm2.1ii} If $\{Q_\e\}_{\e>0}$  is \pseudomin\  then, possibly on a subsequence, $Q_\eps \to Q_0=Q[n_0]$ in $H^1(\Omega,\mcS_0)$ and there exists a family of principal eigenvectors of $Q_\eps$, denoted as $n_\e\in W^{1,\infty}(\Omega, \Sphere^2)$, and a vector-valued function $\myrho \in L^2(\Omega, \RR^3)$,  such that  
%$Q_\eps= Q[n_\e]+\e^2 P_\e^\nor$ with $P^\nor_\eps :=R_{n_\eps} \Vc{\sum_{j=1}^3 \myrho_{\eps j} F_j}{\myrho_\e} R_{n_\eps}^t$. Moreover
\begin{equation} \label{eq: n_eps - n_0 estimate}
\| n_\e -n_0 \|_{H^1_0(\Omega,\RR^3)} \leqslant C \e
\end{equation}
and
\begin{equation}
P_\e^\nor:=\frac{1}{\e^2} \left( Q_\eps  - Q[n_\e] \right) \rightharpoonup 
R_{n_0} \Vc{\left(\sum_{j=1}^3 \myrho_j  F_j\right)}{\myrho} R_{n_0}^t \text{\quad weakly  in  $L^2(\Omega, \mcS_0)$,}   \label{eq: normal component of P weak}
\end{equation}
where $R_{n_0}\in W^{1,\infty}(\Omega, \SO(3))$  is the field of rotation matrices given by \eqref{eq: R_n}.
\smallskip

\item\label{thm2.1iii} 
 Let $\{Q_\e\}_{\e>0}$ be an \pseudomin \ family such that $Q_\eps \to Q_0$ in $H^1(\Omega, \mcS_0)$. For any family of principal eigenvectors $n_\e\in W^{1,\infty}(\Omega, \Sphere^2)$ satisfying \eqref{eq: n_eps - n_0 estimate}, and any $P_\e^\nor\in H_0^1(\Omega, \mcS_0)$ satisfying \eqref{eq: normal component of P weak} we have
\begin{equation} \label{eq:mcH_0 defn}
\liminf_{\e\to 0} \mcG_{\e} [Q_{\e}] \geqslant  \mcH_0 [n_0, \myrho]:= \int_\Omega \frac12 B_0 \myrho \cdot \myrho + b_0 \cdot \myrho.
\end{equation}
%{\color{Maroon} For any \pseudomin \ family  $\{Q_\e\}_{\e>0}$  such that $Q_\eps \to Q_0$ in $H^1(\Omega, \mcS_0)$ which according to (ii) can be written in the form $Q_\eps= Q[n_\e]+\e^2 P_\e^\nor$, where $n_\e\in W^{1,\infty}(\Omega, \Sphere^2)$ is a principal eigenvector of $Q_\e$\marginnote{This must be specified to assure that $P_\e^\nor$ also has $n_\e$ as eigenvector and therefore a representation in terms of the $\myrho_\e$.  \color{blue} JR: the formulation as it stands now looks okay to me.} satisfying \eqref{eq: n_eps - n_0 estimate}, and $P_\e^\nor$ satisfies  \eqref{eq: normal component of P weak} with $\myrho \in L^2(\Omega, \RR^3)$, we have}
%\begin{equation} \label{eq:mcH_0 defn}
%\liminf_{\e\to 0} \mcG_{\e} [Q_{\e}] \geqslant  \mcH_0 [n_0, \myrho]:= \int_\Omega \frac12 B_0 \myrho \cdot \myrho + b_0 \cdot \myrho. 
%\end{equation}
Here, $B_0 = \hbox{\rm{diag}}(\mu, \mu, \nu)$ with $\mu =  b^2 s_+$, $\nu  =  \frac13 b^2 s_+ + 2a^2$, and $b_0 \in L^\infty(\Omega, \RR^3)$ is defined by
\be 
\begin{aligned} \label{eq:mcH_0 defnb}
  b_0\cdot e_j & :=  -2 s_+ \left(\nabla n_0 \otimes \nabla n_0\right) : ({R_{n_0}} F_j R_{n_0}^t), \ j=1,2, \\
 % b_0\cdot e_2 & :=  -2 s_+ \left(\nabla n_0 \otimes \nabla n_0\right) : ({R_{n_0}} F_2 R_{n_0}^t), \\
 b_0\cdot e_3&:= \sqrt{6} s_+ |\nabla n_0|^2.
\end{aligned}
\ee
%and
%\begin{equation}\label{eq:mcH_0 defn and b0}
%\begin{aligned}
%B_0 &= \hbox{\rm{diag}}(\mu, \mu, \nu), \quad \mu =  b^2 s_+, \quad  \nu  =  \frac13 b^2 s_+ + 2a^2,\\
%b_{0j} &= -2 s_+ \nabla n_0 \otimes \nabla n_0 : {R_0} F_j R_0^t \text{ \qquad for $j=1, 2$}\\
% b_{03} &= \sqrt{6} s_+ |\nabla n_0|^2.
%\end{aligned}
Also, for every $\myrho\in L^2(\Omega, \RR^3)$ there exists a recovery \pseudomin \ family $Q_\eps= Q[n_\e]+\e^2 P_\e^\nor$ satisfying \eqref{eq: n_eps - n_0 estimate}, \eqref{eq: normal component of P weak} for which $\lim_{\e\to 0} \mcG_{\e} [Q_{\e}] =  \mcH_0 [n_0, \myrho]$.
\smallskip
\item\label{thm2.1iv} 
The unique minimiser of $\mcH_0[n_0,\cdot]$ is given by $\myrho_0 := { -B_0^{-1}} \, b_0$. The corresponding minimum value of the energy is given by
 \begin{equation}
 \label{eq:fundlowerboundPo} \mcH_0[n_0,\myrho_0] =  - s_+^2 \int_\Omega  \frac{2}{\mu}  \left| \grad n_0 \otimes \grad n_0 \right|^2 + \left( \frac{3}{\nu} - \frac{1}{\mu}\right) |\nabla n_0|^4.
\end{equation}
In particular, in the topology induced by \eqref{eq: n_eps - n_0 estimate} and \eqref{eq: normal component of P weak}, the family of energies $\{\mcG_\e\}_{\e>0}$ $\Gamma$-converges to $\mcG_0$, where
\be \label{eq:Gammalimitbdiff0}
 \mcG_0[Q]=  \begin{cases} 
 \mcH_0[n_0,\myrho_0]& \text{ if $Q =Q_0$},\\
+\infty &\text{otherwise}.\end{cases} 
\ee
Moreover, {if $Q^*_\e \to Q_0$ in $H^1(\Omega,\mcS_0)$} is a family of minimisers of $\{\mcE_\e\}_{\e>0}$ with principal eigenvectors $n^*_\eps \in W^{1,\infty}(\Omega, \Sphere^2)$, then $\mcG_\eps[Q^*_\eps] \to \mcH_0[n_0,\myrho_0]$, and
we have
 \begin{align}
  \label{eq: n_e - n_0 = o(eps) thm 2.1}
 \hspace{1.5cm} \frac{1}{\e} \left( n^*_\eps - n_0 \right) &\to 0  \quad \text{strongly in } H_0^1(\Omega, \Sphere^2),\hspace{1.5cm}  \\
 \label{eq: normal component minimiser}
\hspace{1.5cm} \frac{1}{\e^2} \left( Q^*_\eps  - Q[n^*_\e]  \right) &\to P_0^\nor := R_{n_0}\Vc{\left(\sum_{j=1}^3 \myrho_j  F_j\right)}{\myrho_0} R_{n_0}^t \quad \text{strongly  in  $L^2(\Omega, \mcS_0)$.}
\end{align}
\end{enumerate}
\end{theorem}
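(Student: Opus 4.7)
The plan is to carry out a refined $\Gamma$-expansion of $\mcE_\e$ around the Oseen-Frank minimiser $Q_0 = Q[n_0]$, based on the decomposition $Q_\e = Q[n_\e] + \e^2 P_\e^\nor$ with $n_\e$ the principal eigenvector of $Q_\e$ and $P_\e^\nor$ orthogonal to $T_{Q[n_\e]}\mcS_*$ pointwise. Part (i) would follow by a standard compactness argument: $\mcG_\e[Q_\e]\leqslant C$ forces $\mcE_\e[Q_\e]\leqslant \mcE_0[Q_0]+C\e^2$, so Proposition~\ref{prop:zero} extracts a weakly $H^1$-convergent subsequence, Remark~\ref{rem: unique minimiser Q_0} pins the limit to one of $Q_0^\pm$, and convergence of energies upgrades this to strong $H^1$ convergence.

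For part (ii), I would first combine the uniform Lipschitz bound with strong $H^1$ convergence to obtain $Q_\e\to Q_0$ uniformly (Arzel\`a--Ascoli); since $Q_0$ has a simple principal eigenvalue, this makes $n_\e$ well-defined and $W^{1,\infty}$ by smooth dependence of principal eigenvectors on matrices with simple spectrum. Since $Q[n_\e]$ is uniaxial, any symmetric tensor with $n_\e$ as an eigenvector commutes with it, so $P_\e^\nor:=\e^{-2}(Q_\e-Q[n_\e])$ commutes with $Q_\e$ and admits the representation $P_\e^\nor = R_{n_\e}V_{\rho_\e}R_{n_\e}^t$ for some $\rho_\e\in L^\infty(\Omega,\RR^3)$. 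Inserting this decomposition into $\mcE_\e$ and Taylor-expanding $\tilde f_{bulk}$ about $Q[n_\e]\in\mcS_*$ (where $\tilde f_{bulk}$ vanishes with critical gradient) yields the core expansion
\[
\mcG_\e[Q_\e] = \tfrac{s_+^2}{\e^2}\Bigl({\textstyle\int}|\nabla n_\e|^2 - {\textstyle\int}|\nabla n_0|^2\Bigr) + \int \nabla Q[n_\e]:\nabla P_\e^\nor + \tfrac12\!\int\! D^2\tilde f_{bulk}(Q[n_\e])[P_\e^\nor,P_\e^\nor] + \tfrac{\e^2}{2}\!\int|\nabla P_\e^\nor|^2 + O(\e^2).
\]
Two identifications are then crucial: integrating the cross term by parts (using $P_\e^\nor=0$ on $\partial\Omega$) and substituting $\Delta Q[n_\e]=s_+(\Delta n_\e\otimes n_\e + n_\e\otimes\Delta n_\e + 2\nabla n_\e\otimes\nabla n_\e)$ together with $n_\e\cdot\Delta n_\e = -|\nabla n_\e|^2$ produces $\int\nabla Q[n_\e]:\nabla P_\e^\nor = \int b^{(n_\e)}\cdot\rho_\e$, where $b^{(n_\e)}$ has precisely the components of \eqref{eq:mcH_0 defnb} (with $n_0\to n_\e$); rotational invariance of $\tilde f_{bulk}$ reduces the Hessian $D^2\tilde f_{bulk}(Q[n])$, restricted to $\mathrm{span}\{R_nF_jR_n^t\}_{j=1,2,3}$, to $B_0=\diag(\mu,\mu,\nu)$ (a direct computation in the $n$-frame gives $\mu=b^2 s_+$, $\nu=b^2 s_+/3+2a^2$). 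Young's inequality to absorb $|\!\int b^{(n_\e)}\cdot\rho_\e|$ into $\tfrac14\int B_0\rho_\e\cdot\rho_\e + C\|b^{(n_\e)}\|_{L^2}^2$ (with $\|b^{(n_\e)}\|_{L^2}\le C$ by the $W^{1,\infty}$ bound on $n_\e$) then yields both $\|\rho_\e\|_{L^2}\le C$ and $\int|\nabla n_\e|^2-\int|\nabla n_0|^2\le C\e^2$; the $H^1_0$-rate $\|n_\e-n_0\|_{H^1_0}\le C\e$ follows from the identity
\[
{\textstyle\int}|\nabla n_\e|^2 - {\textstyle\int}|\nabla n_0|^2 = {\textstyle\int}|\nabla(n_\e-n_0)|^2 - {\textstyle\int}|\nabla n_0|^2\,|n_\e-n_0|^2,
\]
derived from $-\Delta n_0=|\nabla n_0|^2 n_0$ and $n_0\cdot(n_\e-n_0)=-\tfrac12|n_\e-n_0|^2$, together with coercivity of the second variation of the Dirichlet energy at $n_0$ (modulo zero modes, absorbed by the subsequence choice). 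Weak $L^2$-compactness of $\rho_\e$ and uniform $R_{n_\e}\to R_{n_0}$ then give the weak convergence $P_\e^\nor\rightharpoonup R_{n_0}V_\rho R_{n_0}^t$.

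For parts (iii) and (iv), the liminf inequality would follow from the same expansion by dropping its non-negative first and last terms and passing to the limit, using strong $L^2$ convergence $b^{(n_\e)}\to b_0$ (coming from $\nabla n_\e\to\nabla n_0$ in $L^2$) and weak lower semicontinuity of the positive-definite quadratic $\rho\mapsto \tfrac12\int B_0\rho\cdot\rho$. The recovery sequence would be $Q_\e = Q[n_0]+\e^2 R_{n_0}V_{\rho^\delta}R_{n_0}^t$ with $\rho^\delta\to\rho$ in $L^2$, followed by a diagonal argument. For (iv), positive-definiteness of $B_0$ makes $\mcH_0[n_0,\cdot]$ strictly convex with unique minimiser $\rho_0=-B_0^{-1}b_0$, and \eqref{eq:fundlowerboundPo} follows by completing the square using the algebraic identity $(b_0\cdot e_1)^2+(b_0\cdot e_2)^2 = 4s_+^2|\nabla n_0\otimes\nabla n_0|^2 - 2s_+^2|\nabla n_0|^4$ (obtained by expanding $\nabla n_0\otimes\nabla n_0$ in the $n_0$-frame). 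Strict convexity of $\mcH_0[n_0,\cdot]$ upgrades weak to strong $L^2$-convergence of $\rho_\e^*$, and the energy convergence $\mcG_\e[Q_\e^*]\to\mcH_0[n_0,\rho_0]$ forces the harmonic-map energy surplus to be $o(\e^2)$, producing the improved rate $\|n_\e^*-n_0\|_{H^1_0}=o(\e)$.

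The main obstacle is step (ii): simultaneously extracting the $O(\e)$ rate for $\|n_\e-n_0\|_{H^1_0}$ and the uniform $L^2$ bound on $\rho_\e$ from the single quantitative bound $\mcG_\e\le C$. Converting the energy surplus $\int|\nabla n_\e|^2-\int|\nabla n_0|^2\le C\e^2$ into an $H^1_0$-rate via the identity above requires coercivity of the second variation of the Dirichlet energy at $n_0$, which may have a nontrivial kernel (e.g.\ in the conformal case of Section~\ref{sec: conformal n}, where zero modes arise from moduli of escape points); circumventing this entails either working modulo zero modes or exploiting the specific structure of the subsequential convergence $Q_\e\to Q_0$ to suppress the zero-mode component.
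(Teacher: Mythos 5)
Your overall strategy coincides with the paper's: the same eigenframe decomposition $Q_\e = Q[n_\e] + \e^2 R_{n_\e}V_{\rho_\e}R_{n_\e}^t$ (the paper's Lemma~\ref{prop: parameterisation}), the same exact evaluation of the bulk term as a quadratic form $\tfrac{\e^4}{2}B_\e\rho_\e\cdot\rho_\e$ via rotational invariance, the same integration by parts converting the cross term into $\int b_\e\cdot\rho_\e$, a recovery family of the same type (the paper uses Hopf cutoffs to keep $P^\nor_\e\in H^1_0$ and to kill $\e^2\|\nabla P^\nor_\e\|_{L^2}^2$ --- your ``diagonal argument'' must be arranged to do both), and the same completion of the square for (iv), whose algebraic identity you verify correctly. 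Parts (i), (iii) and (iv) are sound as sketched, conditional on part (ii).

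The genuine gap is exactly where you locate it: the coercivity inequality $\int_\Omega|\nabla n_\e|^2 - |\nabla n_0|^2\geqslant\alpha^2\|n_\e-n_0\|^2_{H^1_0}$, without which neither \eqref{eq: n_eps - n_0 estimate} nor the improved rate \eqref{eq: n_e - n_0 = o(eps) thm 2.1} follows. Your proposed remedies --- ``coercivity modulo zero modes, absorbed by the subsequence choice'' or ``suppressing the zero-mode component'' --- do not work. The second variation $\W[v]=\int_\Omega|\nabla v|^2-|\nabla n_0|^2|v|^2$ genuinely has a three-dimensional kernel on $H^1_0(\Omega,\RR^3)$, namely $v=(n_0\cdot e_3)\gamma$ with $\gamma\in\RR^3$ constant (these lie in $H^1_0$ precisely because the boundary data is planar); the estimate you need is absolute, not modulo a kernel, and no subsequence extraction restores coercivity of a degenerate quadratic form. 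The paper's Lemma~\ref{lem: minimality of n_0} closes this by using that $v=n_\e-n_0$ is not an arbitrary element of $H^1_0$: as a difference of unit vector fields it satisfies the pointwise identity $2v\cdot n_0=-|v|^2$, hence the constraint $-|v|\leqslant 2v\cdot n_0\leqslant 0$ once $n_\e\to n_0$ uniformly. Minimising $\W$ over $\{\|v\|_{L^2}=1,\,-|v|\leqslant 2v\cdot n_0\leqslant 0\}$ then gives a positive first eigenvalue: a kernel element $(n_0\cdot e_3)\gamma$ compatible with $v\cdot n_0\leqslant 0$ forces $\gamma\cdot e\leqslant 0$ for every equatorial unit vector $e$ (since $n_b$ has nonzero degree, $n_0$ approaches every planar direction near $\partial\Omega$), whence $\gamma=-re_3$ with $r>0$, and the remaining constraint forces $n_0\cdot e_3\leqslant\tfrac12$ throughout $\Omega$, contradicting the nonzero degree of $n_b$ on the simply connected domain. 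So it is the unit-length constraint together with the topological hypothesis --- not a modding-out --- that rescues coercivity; this is the one step your proposal would still need to supply.
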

%\marginnote{I think that stuff written after theorem (until the end of the subsection) needs to be rewritten and possibly shorten now as there are a lot of inconsistencies. I would leave it to Jonathan as he wrote it before. I would also suggest to combine a lot of blah-blah in a separate subsection as now we have 2 pages of it right after the main result, looks bad.}
The proof of Theorem~\ref{thm:gexp2} is given in Section~\ref{sec:thm2.1}.  Key components include a quadratic lower bound on the variation of the $\Sphere^2$-valued Dirichlet energy at $n_0$ (Lemma~\ref{lem: minimality of n_0}) and a $Q$-tensor decomposition (Lemma~\ref{prop: parameterisation}) into  a sum of two  terms with a common eigenbasis, one taking values on the limit manifold, and the other taking values transverse to it.  The fact that almost-minimisers $Q_\eps$ have  uniformly bounded finite $W^{1,\infty}$-norm is used to bound $\mcG_\eps$ from below (in fact, finite $W^{1,4}$-norm would suffice).

\begin{remark}\label{rem: boundary layer}
We note that $P_0^\perp$ in \eqref{eq: normal component minimiser} does not vanish on the boundary, so that higher-order corrections to the minimiser $Q_\eps^*$  contain a boundary layer.
%so that \eqref{eq: Q_eps expansion } is to cannot hold pointwise\marginnote{\color{red} bad explanation, has to be specified what it means exactly}, as the right-hand side does not satisfy the  boundary condition $Q_b$.  %One expects  this can be fixed by determining a width-$\eps$ boundary layer correction to $P_0^\nor$, which would affect the energy at $O(\eps^3)$. 
\end{remark}

\begin{remark}\label{rmk:1stbiax} The expression for $P_0^\bot$ %in \eqref{eq: normal component minimiser} 
can be written as (cf.~\eqref{eq: normal component expansion})%given by Theorem~\ref{thm:gexp2} can be  concisely written in terms of a position-dependent orthonormal frame for $\RR^3$, where $n_0$ is one of elements of the frame, as follows:  Recalling that $R_{n_0}$ is the unique rotation about an axis in the $12$-plane by an angle between $0$ and $\pi$ that maps $e_3$ into $n_0$, let
%$p_0 = R_{n_0} e_1$ and $q_0 = R_{n_0} e_2$. Then  from  \eqref{eq:mcH_0 defn}--\eqref{eq: eta_0 defn thm 2.1} and \eqref{eq: P_0^T }, 
%the expression  \eqref{eq: normal component minimiser}--\eqref{eq:eta_min} for $P_0^\nor$ may be written as
 \begin{equation}
\label{eq:  P_0^N expand}
P_0^\bot = c_0 Q_0 + c_1 \left(p_0 \otimes p_0 - q_0\otimes q_0\right) + c_2 \left(p_0 \otimes q_0 + p_0\otimes q_0\right),
\end{equation} 
where $p_0 = R_{n_0} e_1$ and $q_0 = R_{n_0} e_2$ (so that $n_0$, $p_0$ and $q_0$ constitute an orthonormal frame),  and
\begin{align}
%\label{eq: cj's }
c_0 &= -\frac{2\sqrt{6}}{\nu} \left|\nabla n_0\right|^2, \label{eq: cj's1 }\\ 
c_1 &= \frac{\sqrt{2} s_+}{\mu}\left(|\nabla n_0\cdot p_0|^2 - |\nabla n_0\cdot q_0|^2\right),\label{eq: cj's2 } \\ 
c_2 &= \frac{2\sqrt{2} s_+}{\mu}(\nabla n_0\cdot p_0)\cdot (\nabla n_0\cdot q_0).\label{eq: cj's3 }
\end{align}
%$P_0^\top$ is orthogonal to the tangent space of $\mcS_*$ at $Q_0$, which is spanned by $n_0\otimes p_0 + p_0\otimes n_0$ and  $n_0\otimes q_0 + q_0\otimes n_0$, and generates $O(\eps^2)$ corrections to the eigenvalues of $Q_0$.  In %particular, $c_0$ produces a shift in the scalar order parameter $s_+$ while preserving uniaxiality, while $c_1$ and $c_2$ break the eigenvalue degeneracy in $Q_0$ and introduce biaxiality.   The absolute value of the difference of the two %negative  eigenvalues of $Q_\eps$,  denoted $d_\eps$, can be regarded as a measure of biaxiality,  and is given to leading order by
 The coefficients $c_1$ and $c_2$ describe biaxiality;  %terms %break the eigenvalue degeneracy in $Q_0$ and 
 %introduce biaxiality; 
 the quantity $\eps^2(c_1^2 + c_2^2)$ is the square of the difference of the two subprincipal eigenvalues of the minimiser $Q^*_\eps$, to leading order in $\eps$.
 %The orientation of the subprincipal  eigenvectors in the plane orthogonal to $n_0$ is determined by $c_1/c_2$.   
 The coefficient $c_0$ describes an $O(\eps^2)$ correction to the principal eigenvalue of $Q^*_\eps$. % from $2s_+/3$. % from the value $s_+$ that minimises the bulk potential.  %principal eigenvalue of $Q^*_\eps$, to leading order in $\eps$.
%The scalar uniaxial order parameter for $Q^*_\eps$ is given by $s_+ + \eps^2 c_0$, to leading order.
 \end{remark}

%\marginnote{JR: the estimate \eqref{eq: n_eps - n_0 estimate} is suboptimal.  Perhaps it's worth making a remark about this?}

\begin{remark}
The energy  $\mcH_0[n_0,\myrho]$ distinguishes between various \pseudomin \ families $\{ Q_\e \}$ and gives a non-trivial energy landscape. The $\Gamma$-limit $\mcH_0$ provides a starting point for an asymptotic analysis of $Q$-tensor dynamics under  gradient flow.  The fact that $\mcH_0$ depends only on $\rho$ indicates that the director dynamics is much slower than that of displacements transverse to the limit manifold.  Specifically, for an initial condition
 with $o(\eps)$-displacements from the optimal director and 
 %much smaller, 
 $O(\eps^2)$-displacements from the limit manifold, the time scale for director dynamics is nevertheless longer.
 
 \begin{remark} It is easy to generalise Theorem~\ref{thm:gexp2} to boundary conditions where $n_b \cdot e_3 \geqslant 0$ (or  $n_b \cdot e_3 \leqslant 0)$.  %In this case, if $n_b$ is not planar, then no assumption about the degree is required (indeed, the degree might not be well defined).} 
Moreover, if $n_b\cdot e_3$ is strictly positive (or strictly negative) at some point $x_0 \in \partial \Omega$ then it is not necessary to assume that $n_b$ has nonzero degree (indeed, the degree might not be well defined in this case). 
\end{remark}

\begin{remark} \label{rmk:discussion}
An informal argument suggests that Theorem~\ref{thm:gexp2} may extend to more general $Q$-tensor energy densities 
%Consider an energy density 
of the form $|\nabla Q|^2  + \eps^{-2} f(Q) +  \eps^\delta g(Q,\nabla Q)$, where $f$ is any bulk potential minimised by prolate uniaxial $Q$-tensors of fixed norm, and $g$ represents an additional contribution to the energy.
In order that the generalised model reduce to the (one-constant) Oseen-Frank description away from defects, we require that $\delta > 0$.  
Under suitable conditions on $g$,  we  expect the transverse component $P_0^\bot$  to be unaffected by this additional contribution, and Theorem~\ref{thm:gexp2} still to hold but with a rate of convergence of $||n_\eps - n_0||_{H^1}\to 0$ possibly depending on $\delta$. %if $\kappa(\eps)/\eps$ diverges as $\eps\to 0$, 
%then instead of \eqref{eq: n_e - n_0 = o(eps) thm 2.1}, 
%
  The key point is that $P_0^\bot$  should still be given by \eqref{eq: normal component minimiser}, with $\mu$ and $\nu$ the nonvanishing eigenvalues of  the Hessian of $f$ at its  minimum.
%An informal argument suggests that Theorem~\ref{thm:gexp2} may extend to more general $Q$-tensor energy densities 
%%Consider an energy density 
%of the form $|\nabla Q|^2  + \eps^{-2} f(Q) +  \eps^\delta g(Q,\nabla Q)$, where $f$ is any bulk potential minimised by prolate uniaxial $Q$-tensors of fixed norm, and $g$ represents an additional contribution to the energy.
%In order that the generalised model reduce to the (one-constant) Oseen-Frank description away from defects, we require that $\delta > 0$.  %Formal arguments suggest that Theorem~\ref{thm:gexp2} should still hold 
%For suitable conditions on $g$, we  expect Theorem~\ref{thm:gexp2} to hold with one modification: %if $\kappa(\eps)/\eps$ diverges as $\eps\to 0$, 
%%then instead of \eqref{eq: n_e - n_0 = o(eps) thm 2.1}, 
%if $\delta < 1$, we should have $||n_\eps - n_0||_{H^1} < C \eps^\delta$.  The key point is that the transverse component $P_0^\bot$  should be unaffected by the additional contribution; it  should still be given by \eqref{eq: normal component minimiser}, with $\mu$ and $\nu$ the nonvanishing eigenvalues of  the Hessian of $f$ at its  minimum.

The argument may be illustrated by a finite-dimensional proxy for the Landau-de Gennes energy, in which the tensor field $Q$ is replaced by just two quantities: $x$,  a proxy for the director displacement $n-n_0$, where $n$ is the principal eigenvector of $Q$; and $y$, a  
proxy for the transverse component, $Q - Q[n]$.  The proxy energy is given by
%\[ \mcE_\eps(x,y) = \frac{\mu}{2\eps^2} x^2 + \left(\half p x^2  + q xy + \half r y^2+ bx\right) + \eps^\delta g(x,y), \ \text{ where } \mu, r > 0. \]
\be
\mcE_\eps(x,y) =  \left(\frac{1}{2} p x^2  + q xy + \frac{1}{2} r y^2+ by\right) + \frac{\mu}{2\eps^2} y^2 +  \eps^\delta g(x,y),
\ee % \ \text{ where } \mu, r > 0. \]
%Here,  $x$ represents $P_0^\bot$, $y$ represents  $P^\top_\eps$ and $\kappa = \eps^\delta$.    
where $p, \delta, \mu  > 0$.  
The term $(\half p x^2  + q xy + \half r y^2+ by)$  corresponds to the elastic energy expanded about its minimum -- hence   the absence of a term linear in $x$
and the requirement that $p> 0$. % corresponds to $n_0$   
%The term $\eps^\delta g$ corresponds to the perturbation.
The term $\half \mu y^2/\eps^2$ corresponds to the bulk potential expanded about its minimum; the absence of terms in $x^2$ and $xy$  reflects  the rotational invariance of the bulk potential. % {\color{blue} The perturbation is represented by $\eps^\delta g$}.  % We assume that $|g(x,y)|\leqslant  C (x^2 + y^2)$. 
%The Euler-Lagrange equation is %$\nabla E = 0$ for the minimiser yields the equations
 To leading order in $\eps$, the minimiser $(x^*_\eps, y^*_\eps)$ is given by
%\[ x = -\frac{\eps^2}{\mu}(px + qy + b+ \eps^\delta g_{,x}), \quad y = -\frac{1}{r}(qx + \eps^\delta  g_{,y}).\]
%To leading order in $\eps$, the solution $(x_\e,y_\e)$ is given by
%\[ x_\eps = \eps^2 x_0,  \text{ where } x_0 = \frac{b}{\mu}, \ \text{ and }  y_\eps = -\frac{\eps^\delta}{r} g_{,y}(0,0) +\frac{\eps^2}{\mu r} b. \]
\be  
x^*_\eps = -\frac{\eps^\delta}{r} \frac{\partial}{\partial y} g(0,0) +\frac{\eps^2}{\mu r} b, \quad y^*_\eps = \eps^2  \frac{b}{\mu}. 
\ee
Thus, the ``transverse component''  $y^*_\eps$ is independent of $g$, while the ``director displacement''  $x^*_\eps$  is driven by $g$, at least for $\delta < 2$. %and is independent of $g$ for $\delta > 2$.

\end{remark}

%
 %for a nonequilibrium initial condition $Q[n] + \eps^2 R_n V_\rho R_n^T$ with $o(\eps)$ variation
  %from the leading-order optimal director,  $n_0$, and much smaller $O(\eps^2)$ variation from the leading-order optimal scalar order parameters $\rho_0$, 
  %
  %and $O(\eps^2)$  $||n - n_0||_{H^1} = o(\e)$   with an $o(\epsilon$ variation   from the minimiser with an
%$O(\eps^2)$ variation from the optimal   component transverse to the limit manifold and  larger $o(\eps)$ component along the limit manifold, 
%the time scale for director dynamics is nevertheless  longer than for the dynamics of the scalar order parameters.}
  %\marginnote{JR: Do we want to say this?}
\end{remark}

\section{$\Gamma$-expansion: proof of Theorem~\ref{thm:gexp2}}\label{sec:thm2.1}
 
\subsection{Proof of $(i)$: equi-coercivity of the energy functionals (compactness) }  
Here we prove statement $(i)$ of Theorem~\ref{thm:gexp2}.
Consider  a family $\{Q_\e\}_{\e>0} \subset \mcU$ such that $\mcG_{\e} [Q_\eps] \leqslant C$ for some constant $C>0$. It is clear that
\be\label{unifenerg}
 \mcE_\e[Q_\eps]=\int_\Omega \frac12  |\nabla Q_\eps|^2 + \frac{1}{\e^2} \tilde f_{bulk} (Q_\e) \leqslant C\eps^2+\mcE_0 [Q_0].
\ee
In particular, $\{\mcE_\e[Q_\eps]\}_{\e>0}$ is bounded for $\e$ sufficiently small.
%and therefore, for some $C_0>0$, the following uniform estimate holds:
%\be\label{est:unifQeps0}
%\int_\Omega \frac12  |\nabla Q_\eps|^2 + \frac{1}{\e^2} \tilde f_{bulk} (Q_\e) \leqslant C_0.
%\ee 
Since $\tilde f_{bulk} \geqslant 0$,  there exist a (not relabeled) subfamily $\{Q_{\eps}\}_{\e>0}\subset \mcU$, and a tensor field $Q_*\in H^1(\Omega, \mcS_0)$, such that
\be\label{conv:weakQepsk0}
Q_{\eps}\rightharpoonup Q_*\; \hbox{ in } H^1(\Omega, \mcS_0), \quad \tilde f_{bulk} (Q_{\e}) \to 0\; \hbox{ a.e. in }  \Omega.
\ee
From the above, $ Q_* \in   H^1(\Omega, \mcS_*)$ and $Q_{\eps}\to Q_*$ strongly in $L^2 (\Omega, \mcS_0)$.
By the lower semicontinuity of the norm and the bound \eqref{unifenerg} we obtain
\be\label{est:energcomparison0}
\int_\Omega |\nabla Q_*|^2\, \leqslant \liminf_{\eps\to 0} \int_\Omega |\nabla Q_\eps|^2 \leqslant\lim_{\eps\to 0} (C\eps^2+\mcE_0 [Q_0]) = \mcE_0 [Q_0]=\int_\Omega |\nabla Q_0|^2\, ,
\ee
with $Q_0\in\mathrm{argmin}_{Q\in \mcU}\mcE_0 [Q]$. Therefore, 
\be
Q_*\in\mathrm{argmin}_{Q\in \mcU}\mcE_0 [Q] \quad \text{and}\quad \|\nabla Q_\eps\|_{L^2}\to \|\nabla Q_*\|_{L^2}=\|\nabla Q_0\|_{L^2}.
\ee
 Combining this information with \eqref{conv:weakQepsk0} we conclude that $Q_{\eps}\to Q_*$ strongly in $H^1(\Omega, \mcS_0)$.
%Therefore $Q_*$ is a minimiser of the harmonic map problem  \eqref{eq:problemH1}. 
Eventually, by Remark~\ref{rem: unique minimiser Q_0},
$Q_*=s_+\left(n_0^\pm \otimes n_0^\pm -\frac13 I \right)$
where $n_0^\pm$ is one of the two minimisers of problem \eqref{eq: min problem for n_0}.

 %if the boundary condition $n_b$ is sign-definite but not planar, then $Q_*$ is equal to the unique minimiser $Q_0$.  
%as $n_b$ is planar, without loss of generality, we may assume that $Q_*$ is equal to $Q_0$, and not its reflection.
%Since there are only two minimisers of this problem we know that $\bar Q$ is equal to either  $Q_0$ or its reflection (see Proposition~\ref{thm:lb}). Without loss of generality, we assume $\bar Q = Q_0$. 
%Using the above inequality we obtain $\|\nabla Q_\eps\|_{L^2}\to \|\nabla Q_0\|_{L^2}$. 

%\be
%Q_{\eps}\to Q_0\; \textrm{ strongly in }H^1(\Omega, \mcS_0).
%\ee

\subsection{Proof of (\emph{ii}): parameterisation of {\pseudomin} families and convergence estimates} 
Here we prove statement (\emph{ii}) of Theorem~\ref{thm:gexp2}. In agreement with Remark \ref{rem: unique minimiser Q_0}, and to fix the ideas, we set $n_0:=n_0^+$ and $Q_0:=Q_0^+=s_+ \left( n_0\otimes n_0 - \frac13 I \right)$. Also, to shorten notation, we set $Q[n]:=s_+\left( n \otimes  n -\frac13 I \right)$ for any $n\in\Sphere^2$, and $\Vc{}{\myrho}:=\sum_{j=1}^3 \myrho_j F_j$ for any vector $\myrho\in\RR^3$, where $F_j$ are the first three elements of the basis \eqref{eq:Fbasis}. 

%\subsubsection{Parameterisation of \pseudomin \ families}
We show that any almost-minimising family $\{Q_\eps\}_{\eps>0}\subseteq\mcU$ admits a parameterisation in terms of two families of vector fields:
\begin{itemize}[leftmargin=0.8cm]
\item the family $\{n_\e\}_{\e>0}\subseteq W^{1,\infty}(\Omega, \Sphere^2)$ of principal normalised eigenvectors of $\{Q_\eps\}_{\eps>0}$;
\smallskip
\item the family of vector fields $\{\myrho_\eps\}_{\e>0}\subseteq W_0^{1,\infty}(\Omega, \RR^3)$ that characterises the displacement between $Q_\eps$ and  the limit manifold $\mcS_*$ defined by \eqref{def:limmanifold}. 
\end{itemize}
The parameterisation facilitates the fine control of the energy difference $\mcE_\eps[Q_\eps] - \mcE_0[Q_0]$; contributions to $\mcE_\eps[Q_\eps]$ from $\myrho_\eps$  are controlled by the bulk potential, which takes its minimum on the limit manifold, while contributions from $n_\eps$ are controlled by the elastic energy, using Lemma~\ref{lem: minimality of n_0} below.  This separation is necessitated by the fact that, by rotational invariance, the second variation   $\grad^{(2)}   f_{bulk}$ of the bulk potential  on $\mcS_*$  is only positive semidefinite, not positive definite.  To linear order, variations in $n_\eps$ are tangent to  $\mcS_*$ and lie in the null space of $\D2   f_{bulk}$, while variations in $\myrho_\eps$ are normal to $\mcS_*$ and lie in the subspace on which $\D2    f_{bulk}$ is positive definite.

%For the proof part of $(ii)$ in Theorem~\ref{thm:gexp2} we shall need the following result.
\begin{lemma}\label{prop: parameterisation}
 Let $\Omega\subset\RR^2$ be a bounded and simply-connected domain and  $n\in C^1(\bar\Omega,\Sphere^2)$.  
 %be such that $Q_0:= s_+ \left( n_0 \tens n_0 - \frac13 I \right)\in \mathrm{argmin}_{Q\in \mcU}\mcE_0 [Q]$.
 %for any $\eps>0$, $Q_\e  \in \mcU$. 
 Suppose that $\{Q_\eps\}_{\eps>0}\subseteq\mcU$ is uniformly bounded in $W^{1,\infty}(\Omega, \mcS_0)$, and 
 \[
 Q_\e \rightarrow Q[n]:=s_+ \textstyle\left( n\otimes n - \frac13 I \right)\text{\quad strongly in $H^1(\Omega,\mcS_0)$.}
 \]    
Then, for $\e$ sufficiently small, the following hold:
\begin{enumerate}[leftmargin=1cm,label=\textup{(}\roman*\textup{)},widest = i]
  \item There exists a principal eigenvector $n_\eps \in W^{1,\infty}(\Omega, \Sphere^2)$ of $Q_\eps$ such that for any $1 \leqslant p <\infty$,
\begin{equation}
\label{eq: n_e goes to n_0 }
n_\eps \rightarrow n \text{ \;in } W^{1,p} (\Omega,\Sphere^2),\text{ as well as in $C(\bar \Omega, \Sphere^2)$.}
%\ 1 \leqslant p <\infty, \text{ as well as uniformly}.
\end{equation}

\medskip
  \item There exists a vector-valued function $\myrho_\e \in W_0^{1,\infty}(\Omega, \RR^3)$ such that
\begin{equation}
\label{eq: parameterisation }
Q_\eps =  Q[n_\e] + \eps^2 P^\nor_\eps,\quad
P^\nor_\eps :=R_{n_\eps} \Vc{\sum_{j=1}^3 \myrho_{\eps j} F_j}{\myrho_\e} R_{n_\eps}^t.
\end{equation}
%where
%\begin{equation}
%\label{eq: P_e^perp}
%P^\nor_\eps = \sum_{j=1}^3 \myrho_{\eps j}R_{n_\eps} F_j R_{n_\eps}^t ,
%\end{equation}
Here, $R_{n_\eps}$ is the rotation given by \eqref{eq: R_n}.
Moreover, we have, for any $1 \leqslant p <\infty$,
\begin{equation} \label{eq: eta_e goes to 0 }
R_{n_\eps} \rightarrow R_{n_0} \text{ \; in } W^{1,p} (\Omega,\SO(3)), \quad
% 1 \leqslant p <\infty,   \text{ as well as  uniformly, and} 
%\label{eq: R_e goes to R_0 }
\eps^2 \myrho_\eps \rightarrow 0 \text{ \;in } W_0^{1,p} (\Omega,\RR^3).  
%1 \leqslant p < \infty,   \text{ as well as  uniformly},
\end{equation}
as well as, respectively, in $C(\bar \Omega,\SO(3))$ and in $C(\bar \Omega,\RR^3)$.
\end{enumerate}
\end{lemma}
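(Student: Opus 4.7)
The plan is to construct $n_\eps$ as a regular, globally oriented selection of a principal eigenvector of $Q_\eps$, and then to exploit the fact that once $n_\eps$ is a principal eigenvector of $Q_\eps$, the difference $Q_\eps - Q[n_\eps]$ automatically lies in a three-dimensional subspace after conjugation by $R_{n_\eps}$, which produces $\myrho_\eps$. First I would upgrade the convergence: the uniform bound $\|Q_\eps\|_{W^{1,\infty}}\leqslant C$ together with the strong $H^1$ convergence $Q_\eps \to Q[n]$ gives, via Arzel\`a--Ascoli and uniqueness of the $L^2$ limit, $Q_\eps \to Q[n]$ in $C(\bar\Omega,\mcS_0)$; interpolating the uniform $L^\infty$ bound on $\nabla Q_\eps$ with its $L^2$ convergence then upgrades this to convergence in $W^{1,p}(\Omega,\mcS_0)$ for every $p<\infty$. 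Since the principal eigenvalue $\tfrac{2}{3}s_+$ of $Q[n]$ is simple with spectral gap $s_+$, uniform convergence forces this gap to persist for $Q_\eps(x)$ on $\bar\Omega$ once $\eps$ is small. On the open subset of symmetric matrices with simple principal eigenvalue, the spectral projector onto the principal eigenline is real-analytic, so $Q_\eps$ induces a $W^{1,\infty}$-regular field of unoriented principal eigenlines. I fix the orientation by declaring $n_\eps(x)$ to be the unique unit eigenvector along this line with $n_\eps(x)\cdot n(x) > 0$; uniform closeness of $Q_\eps$ to $Q[n]$ pushes this inner product close to $+1$, so the sign choice is continuous and produces $n_\eps \in W^{1,\infty}(\Omega,\Sphere^2)$. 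Differentiating the eigenvalue equation expresses $\nabla n_\eps$ smoothly in $Q_\eps$ and $\nabla Q_\eps$, so the $W^{1,p}$ and $C$ convergence of $Q_\eps$ transfers to $n_\eps \to n$, yielding (i).

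For (ii), the key linear-algebraic observation is that, because $n_\eps = R_{n_\eps} e_3$ is an eigenvector of $Q_\eps$, the conjugated tensor $R_{n_\eps}^t Q_\eps R_{n_\eps}$ has $e_3$ as eigenvector and is therefore block-diagonal, so it lies in $\operatorname{span}\{F_1,F_2,F_3\}$; the same holds trivially for $R_{n_\eps}^t Q[n_\eps] R_{n_\eps} = \sqrt{2/3}\, s_+\, F_3$. I therefore define $\myrho_\eps \in W^{1,\infty}(\Omega,\RR^3)$ by $R_{n_\eps}^t(Q_\eps - Q[n_\eps])R_{n_\eps} = \eps^2 \sum_{j=1}^{3} \myrho_{\eps j} F_j$; conjugating back yields exactly \eqref{eq: parameterisation }. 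Boundary values follow because taking $H^1$-traces in $Q_\eps \to Q[n]$ forces $n|_{\partial\Omega} = \pm n_b$, and the sign convention $n_\eps\cdot n > 0$ together with the principal-eigenvector characterisation of $Q_b$ forces $n_\eps|_{\partial\Omega} = n_b$; hence $Q[n_\eps] = Q_b = Q_\eps$ on $\partial\Omega$ and $\myrho_\eps|_{\partial\Omega} = 0$. Convergence $R_{n_\eps} \to R_n$ in $W^{1,p}$ and $C(\bar\Omega,\SO(3))$ follows from the smoothness of $n\mapsto R_n$ on the open set $\{n\cdot e_3 > -1\}$ (valid for $n$ on $\bar\Omega$ and, by uniform convergence, for $n_\eps$ as well) combined with the convergence of $n_\eps$. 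Finally, since $n_\eps$ and $Q_\eps$ are uniformly bounded in $W^{1,\infty}$, so is $\myrho_\eps$; multiplication by $\eps^2$ then sends $\eps^2\myrho_\eps$ to $0$ in $W^{1,p}_0$ for every $p<\infty$.

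The main technical point is the construction of $n_\eps$: producing a globally defined, $W^{1,\infty}$-regular \emph{oriented} principal eigenvector from $Q_\eps$. The local regularity is furnished by real-analyticity of the spectral projector at a simple eigenvalue, while the $\pm 1$ ambiguity across $\bar\Omega$ is resolved coherently either by the reference field $n$ (as above) or, equivalently, by the simple connectedness of $\Omega$ used to lift the induced $\RR P^2$-valued map. Once $n_\eps$ is in hand, everything else reduces to the block-diagonal structure of $Q_\eps$ in the $R_{n_\eps}$-frame and standard regularity and convergence arguments.
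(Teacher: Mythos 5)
Your proposal follows essentially the same route as the paper: upgrade to uniform and $W^{1,p}$ convergence by interpolation, use the persistence of the spectral gap to get a smooth spectral projector and fix the sign of $n_\eps$ via $n_\eps\cdot n>0$, and then observe that $R_{n_\eps}^t(Q_\eps-Q[n_\eps])R_{n_\eps}$ has $e_3$ as an eigenvector and hence lies in $\operatorname{span}\{F_1,F_2,F_3\}$, with the boundary condition $\myrho_\eps|_{\partial\Omega}=0$ coming from the trace. The only inaccuracy is the final claim that $\myrho_\eps$ itself is uniformly bounded in $W^{1,\infty}$ --- only $\eps^2\myrho_\eps$ is, since $\myrho_\eps$ carries the factor $\eps^{-2}$ and no energy bound is assumed here; the correct (and immediate) argument, as in the paper, is that $\eps^2\myrho_{\eps j}=R_{n_\eps}^t(Q_\eps-Q[n_\eps])R_{n_\eps}:F_j\to 0$ in $W^{1,p}$ and uniformly because $Q_\eps$, $Q[n_\eps]$ and $R_{n_\eps}$ all converge in these topologies.
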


\begin{proof}
 Since $Q_\eps \rightarrow Q[n]$ in $H^1(\Omega, \mcS_0)$ with $Q_\eps$ uniformly bounded in $W^{1,\infty}(\Omega,\mcS_0)$, by interpolation it is clear that $Q_\eps \rightarrow Q[n]$ in $ W^{1,p} (\Omega,\mcS_0)$ for every $1\leqslant p < \infty$, as well as in $ C(\bar \Omega, \mcS_0)$.
%Since $Q_0 \in W^{1,\infty}(\Omega, \mcS_0)$ (in fact, $Q_0$ is smooth) and $Q_\eps \rightarrow Q_0$ in $H^1(\Omega, \mcS_0)$ along with $Q_\eps \in W^{1,\infty}(\Omega,\mcS_0)$ implies by interpolation that $Q_\e \rightarrow Q_0$ in $W^{1,p}(\Omega,\mcS_0)$, $1 \leqslant p < \infty$, as well as  uniformly.  %As shown below, this convergence is inherited by  $n_\eps$ and $\myrho_\eps$.

\medskip
\noindent (\emph{i}) The tensor field $Q[n]$ has everywhere a principal eigenvalue equal to $2s_+/3$. It follows that for $\eps$ sufficiently small, $Q_\eps$ has everywhere a principal eigenvalue $\lambda_\eps$ with principal eigenvector $n_\eps$ uniquely determined up to a sign. The fact that  $\lambda_\eps$ is nondegenerate implies that the projector $n_\eps\otimes n_\eps$ can be expressed as a smooth function of $Q_\eps$ (see for instance \cite{nomizu}).
%\footnote{\color{blue} JR: First, the reason that $Q_\eps$ has a  principal eigenvalue $\lambda_\eps$ with principal eigenvector $n_\eps$ uniquely determined up to a sign is not to do with the min-max principle.  The point is that for $\eps$ sufficiently small, so that $Q_\eps$ is pointwise sufficiently close to $Q_0$, the principal eigenvalue of $Q_\eps$ is necessarily nondegenerate.  It is a standard fact that in a neighbourhood $U$ of $\mcQ_0$ such that for all $Q \in U$, the principal eigenvalue of $Q$ is nondegenerate, the projector onto the principle eigenvector of $Q$ is a smooth function of $Q$.  Indeed, one has the explicit formula for the projector,
%\[ P = \frac{(Q- \lambda_2) (Q-\lambda_3)}{ (\lambda_1- \lambda_2) (\lambda_1-\lambda_3)},\]
%where $\lambda_1$ is the principal eigenvalue of $Q$.} 
%If the neighbourhood a simply connected, then the principal eigenvector $n$ can be expressed as a smooth function of $Q$, with $P = n\otimes n$.}    
Thereby, $Q[n_\eps] \in W^{1,\infty}(\Omega,\mcS_0)$ and  $Q[n_\eps]  \rightarrow Q[n] $ in $H^1(\Omega,\mcS_0)$, as well as uniformly.  In particular, $n_\eps \cdot n \neq 0$ for $\eps$ sufficiently small.  We may then choose the sign of $n_\eps$ so that $n_\eps \cdot n$ is everywhere positive.  For this choice, $n_\eps \in W^{1,\infty}(\Omega, \Sphere^2)$ and $n_\eps \rightarrow n$ in $W^{1,p} (\Omega,\Sphere^2)$, $1 \leqslant p <\infty$, as well as in $C(\bar \Omega, \Sphere^2)$.

\medskip
\noindent (\emph{ii}) Consider the quantity ${ U_\e}:=\e^{-2} R^t_{n_\eps}\left(Q_\eps - Q[n_\e]\right) R_{n_\eps}$. As $n_\eps$ is an eigenvector of  $Q_\eps$ and of $n_\eps\otimes n_\eps$, it follows that $e_3$ is an eigenvector of ${ U_\e}$.  The unit vector $e_3$ is an eigenvector of $Q \in \mcS_0$ if, and only if, $Q$ is a linear combination of $F_1$, $F_2$ and $F_3$. Therefore  ${ U_\e}=V_{\myrho_\e}$ with $\myrho_\e\cdot e_j:=V_\e : F_j$.
%\begin{equation}
%\label{eq: V_eps first }
%V_\eps :=  \frac{1}{\eps^2} R^t_{n_\eps} \left( Q_\eps - s_+ \left(n_\eps\otimes n_\eps - \frac13 I \right) \right) R_{n_\eps}.  
%\end{equation}
%As $n_\eps$ is an eigenvector of  $Q_\eps$ and of $n_\eps\otimes n_\eps$, it follows that $e_3$ is an eigenvector of $V_\eps$.  From Proposition~\ref{prop: Q with e3 as evec},
%we have that 
%\begin{equation}
%\label{eq: eta_eps }
%V_\eps = \sum_{j=1} \myrho_{\eps j} F_j, \ \text{ where }\ \myrho_{\eps j} = V_\eps : F_j.
%\end{equation}
Setting 
\be
P_\eps^\nor = R_{n_\eps} V_{\myrho_\e} R^t_{n_\eps},
\ee
 we establish \eqref{eq: parameterisation }.
Next, we prove that $\myrho_\e\in W_0^{1,p}(\Omega,\RR^3)$.  It is clear from the assumptions on  $Q_\eps$ that $\myrho_{\eps }  \in W^{1,\infty}(\Omega, \RR^3)$.  %
Now, $Q_\eps \in \mcU$ implies $Q_\eps|_{\partial \Omega} = Q[n]|_{\partial \Omega}$;  also, since $n_\eps$ is the principal eigenvector of $Q_\eps$, $n_\eps|_{\partial \Omega}=n|_{\partial \Omega}$; overall, $\myrho_\eps|_{\partial \Omega} = 0$.

Finally, since $Q_\eps$ and  $R_{n_\eps}$ approach $Q[n]$ and $R_{n}$ with respect to their $W^{1,p}$-norms, as well as uniformly, it follows from \eqref{eq: parameterisation } that $\eps^2\myrho_{\eps } \rightarrow 0$ in $W_0^{1,p}(\Omega,\RR^3)$ as well as uniformly.
\end{proof}

\subsubsection{Strong minimality of $\Sphere^2$-valued harmonic maps}
we will require a lower bound on the  Dirichlet energy of  $\Sphere^2$-valued maps sufficiently close to a minimising harmonic map.  
The following is based on results from \cite{INSZ_new}, and is of independent interest; for completeness we give an account here.  %As in Remark~\ref{XXX}, let $n_0$ denote the unique minimising harmonic map  with boundary conditions $n_b$ with $n_0\cdot e_3 > 0$ on $\Omega$.  
Let $n_b \in C^1(\partial \Omega, \Sphere^2)$ denote planar boundary conditions of nonzero degree, and let
\[ \mcN_0 := \left \{ n \in H^1(\Omega,\Sphere^2) \, : \, n|_{\partial \Omega} = n_b\right\}.\]

\begin{lemma}\label{lem: minimality of n_0}
Let $\{n_\eps\}_{\e>0} \subseteq \mcN_0$ and suppose $n_\eps \rightarrow n_0$ in {$H_0^1(\Omega,\Sphere^2)$}, where we denote by $n_0\in \mcN_0$ a minimiser of the Dirichlet energy.  There exists $\alpha > 0$ such that for all sufficiently small $\eps$, 
\begin{equation}\label{eq:claimcoercivitydif}
 \int_\Omega |\nabla n_\eps|^2 - |\nabla n_0|^2 \geqslant \alpha^2 \|n_\eps - n_0\|^2_{H_0^1(\Omega,\RR^3)}\, . 
 %\int_\Omega |\nabla(n_\eps - n_0)|^2 + |n_\eps-n_0|^2.
\end{equation} 
\end{lemma}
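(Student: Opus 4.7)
Set $v_\eps := n_\eps - n_0 \in H^1_0(\Omega, \RR^3)$. The first step is to reduce the energy difference to a quadratic form in $v_\eps$. The constraint $|n_\eps|^2 = |n_0|^2 = 1$ yields the pointwise identity $n_0 \cdot v_\eps = -\tfrac12 |v_\eps|^2$, while smoothness of $n_0$ (see Remark~\ref{rem: unique minimiser Q_0}) gives the harmonic map equation $-\Delta n_0 = |\nabla n_0|^2 n_0$. Expanding $|\nabla n_\eps|^2 = |\nabla n_0|^2 + 2\nabla n_0 : \nabla v_\eps + |\nabla v_\eps|^2$, integrating by parts, and using $v_\eps|_{\partial\Omega}=0$, the cross term becomes $-\int_\Omega |\nabla n_0|^2 |v_\eps|^2$, hence
\begin{equation*}
\int_\Omega \bigl(|\nabla n_\eps|^2 - |\nabla n_0|^2\bigr) \;=\; B(v_\eps, v_\eps), \qquad B(v, v) := \int_\Omega |\nabla v|^2 - |\nabla n_0|^2 |v|^2.
\end{equation*}

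The second step would be to invoke the strict stability of $n_0$. The second variation of the Dirichlet energy constrained to $\Sphere^2$-valued maps, evaluated at $n_0$, is proportional to $B$ restricted to tangential variations. Combining uniqueness of $n_0$ up to reflection (\cite{sandier1994uniqueness,INSZ_new}) with spectral considerations---one uses that multiplication by $|\nabla n_0|^2 \in L^\infty(\Omega)$ is a compact perturbation on $H^1_0$---delivers the \emph{tangential coercivity}: there exists $\alpha_0 > 0$ such that
\begin{equation*}
B(u, u) \;\geqslant\; \alpha_0 \int_\Omega |\nabla u|^2 \qquad \text{for all } u \in H^1_0(\Omega, \RR^3) \text{ with } u \cdot n_0 = 0 \text{ a.e.}
\end{equation*}

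To pass from this tangential gap to the full inequality \eqref{eq:claimcoercivitydif} I would argue by contradiction. Were \eqref{eq:claimcoercivitydif} to fail for every $\alpha>0$, one would produce a sequence $n_k \to n_0$ in $H^1_0$ with $B(v_k, v_k)/\|v_k\|_{H^1_0}^2 \to 0$, where $v_k := n_k - n_0$. Normalising $u_k := v_k/\|v_k\|_{H^1_0}$ yields $\|u_k\|_{H^1_0} = 1$, $B(u_k, u_k) \to 0$, and, along a subsequence, $u_k \rightharpoonup u$ in $H^1_0$ and $u_k \to u$ in $L^p(\Omega)$ for every $p < \infty$ (Rellich in two dimensions). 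The pointwise constraint forces $\|u_k \cdot n_0\|_{L^1} = \tfrac12 \|v_k\|_{L^2}^2/\|v_k\|_{H^1_0} \leqslant C\|v_k\|_{L^2} \to 0$, so $u \cdot n_0 = 0$ a.e. Weak lower semicontinuity of $\int |\nabla\cdot|^2$, together with $|\nabla n_0|^2 \in L^\infty(\Omega)$, yields $B(u,u) \leqslant 0$, and tangential coercivity then forces $\int_\Omega |\nabla u|^2 = 0$, i.e.\ $u \equiv 0$. But then $\int |\nabla n_0|^2 |u_k|^2 \to 0$, hence $\int |\nabla u_k|^2 = B(u_k, u_k) + \int |\nabla n_0|^2 |u_k|^2 \to 0$, contradicting $\|u_k\|_{H^1_0} = 1$.

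The principal obstacle is the tangential coercivity, i.e.\ the spectral gap for the Jacobi-type operator $-\Delta - |\nabla n_0|^2$ acting on tangential fields in $H^1_0(\Omega,\RR^3)$; everything else---the $|n_\eps|^2 = 1$ identity, integration against the harmonic map equation, and the Rellich-based blow-up---is standard and, given the smoothness of $n_0$ on $\bar\Omega$ in the present two-dimensional planar setting, proceeds without further complication.
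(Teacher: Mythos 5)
Your reduction of the energy difference to the quadratic form $B(v_\eps,v_\eps)=\int_\Omega|\nabla v_\eps|^2-|\nabla n_0|^2|v_\eps|^2$ is exactly the paper's first step (the functional $\W$ in \eqref{eq: F defined}), and your Rellich-based blow-up for converting a linearised spectral gap into \eqref{eq:claimcoercivitydif} is a legitimate alternative to the paper's route. However, there is a genuine gap at the step you yourself flag as the principal obstacle: the tangential coercivity $B(u,u)\geqslant\alpha_0\int_\Omega|\nabla u|^2$ for $u\cdot n_0=0$ is asserted, not proved, and the justification you offer does not deliver it. Minimality of $n_0$ gives only $B\geqslant 0$ on tangential fields; uniqueness of the minimiser (up to reflection) does not imply nondegeneracy of the second variation; and compactness of multiplication by $|\nabla n_0|^2$ relative to $H^1_0$ shows that the infimum of $B$ over the tangential unit sphere is attained, not that it is positive. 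If the Jacobi operator $-\Delta-|\nabla n_0|^2$ had a nontrivial tangential kernel element $u$, the lemma itself would fail (take $n_\eps=(n_0+\eps u)/|n_0+\eps u|$, for which the energy excess is $o(\eps^2)$ while $\|n_\eps-n_0\|^2_{H^1_0}\sim\eps^2$), so the strict positivity you invoke is essentially the linearised form of the statement to be proved.

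The paper supplies precisely this missing ingredient: since $n_0\cdot e_3>0$ in $\Omega$ (from the uniqueness result quoted in Remark~\ref{rem: unique minimiser Q_0}) and $-\Delta(n_0\cdot e_3)=|\nabla n_0|^2(n_0\cdot e_3)$, one factorises $v=(n_0\cdot e_3)w$ to get $\W[v]=\int_\Omega(n_0\cdot e_3)^2|\nabla w|^2\geqslant 0$, with kernel exactly $\{(n_0\cdot e_3)\gamma:\gamma\in\RR^3\}$; the nonzero degree of $n_b$ then rules out any nonzero kernel element compatible with the pointwise constraint $-|v|\leqslant 2v\cdot n_0\leqslant 0$ satisfied by differences of unit vectors. (Note that the paper works directly with this nonlinear constraint in a constrained eigenvalue problem, bypassing the tangential linearisation altogether.) To repair your argument you would need to import the factorisation and the degree argument to show that the kernel of $B$ meets the tangential subspace only in $\{0\}$; with that in hand, your finite-dimensional-kernel and compactness reasoning does close the proof. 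As written, the proposal leaves the decisive step unproved.
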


\begin{proof} 

We first note that as $-\Delta n_0=|\nabla n_0 |^2 n_0$ and $|n_\eps|=|n_0|=1$ in $\Omega$, one has
\begin{equation}\label{eq:HTforn02}
\int_\Omega |\nabla n_\eps|- |\nabla n_0|^2 = \int_\Omega |\nabla(n_\eps - n_0)|^2 -|\nabla n_0|^2 |n_\e -n_0|^2.
\end{equation}
Now, we consider the second-order variation of the unconstrained Dirichlet energy, namely, the functional $\W:H_0^1(\Omega,\RR^3)\to \RR$ defined by 
\begin{equation}\label{eq: F defined}
\W[v]:=\int_\Omega |\nabla v|^2-|\nabla n_0|^2 |v|^2.
\end{equation}
We will reason as in \cite{INSZ_GL} to show that $\W[n_\eps-n_0]\geqslant 0$ and then use an argument inspired by one in  \cite{INSZ_new} to obtain the coercivity of this functional, which together with \eqref{eq:HTforn02} will establish the result \eqref{eq:claimcoercivitydif}.  

Since $n_0$ is a harmonic map, we have
%Let $n_{03} :=n_0\cdot e_3$.  Since $n_0$ is a harmonic map, Then
\begin{equation}\label{eq:Phiharmonic}
-\Delta (n_0\cdot e_3)=|\nabla n_0|^2 (n_0\cdot e_3).
\end{equation} 
Also, due to Remark~\ref{rem: unique minimiser Q_0}, without loss of generality, we may assume that $n_0\cdot e_3 > 0$ in $\Omega$. This means that 
any $ \varphi\in C_c^\infty(\Omega, \RR^3)$ can be written in the form $\varphi=(n_0\cdot e_3)w$ for some $ w\in H_0^1(\Omega,\RR^3)\cap L^\infty(\Omega,\RR^3)$; just set $w:=(n_0\cdot e_3)^{-1}\varphi$. Then, using \eqref{eq:Phiharmonic} and an integration by parts we get
%$$
%F(v)=\int_\Omega n_{03}^2  |\nabla { w}|^2 \,dx=\int_\Omega \sum_{j=1}^3 \left|\nabla v_j-\frac{\nabla n_{03}}{n_{03}} v_j \right|^2\,dx\geqslant 0.
%$$ 
\be
\W[v]=\int_\Omega (n_{0}\cdot e_3)^2  |\nabla { w}|^2 \,\geqslant 0 .
% =\int_\Omega \left|\nabla v-\frac{1}{n_{03}} \nabla n_{03} \tens v \right|^2\,\geqslant 0.
\ee
%The density of $C_c^\infty(\Omega, \RR^3)$ in $H_0^1(\Omega, \RR^3)$, together with the continuity of $\W$ as a functional on $H_0^1(\Omega, \RR^3)$ provides that, for every $v \in H^1_0(\Omega,\RR^3)$,
%\be
%\W[v]\geqslant %\int_\omega \sum_{j=1}^3 \left|\nabla v_j-v_j\frac{\nabla\Phi}{\Phi}\right|^2\,dx=
%\int_\omega (n_{0}\cdot e_3)^2 \left|\nabla\left(\frac{v}{n_{0}\cdot e_3}\right)\right|^2\, \geqslant 0\quad \forall \omega\subset\subset\Omega. % \textrm{ with }\bar\omega\subset\Omega.
%\ee 
The last inequality shows in particular that 
%\marginnote{\color{red}Maybe I do not note something, but it is clear that the integral is zero if and only if the integrand is zero and therefore, if and only if $\nabla w=0$, so I do not see why you have to consider $\omega$} 
%that $\W[v]=0$ if, and only if, for every $\omega \subset\subset \Omega$ there exists $\gamma_\omega\in\R^3$ such that $v(x)= (n_0(x)\cdot e_3) \gamma_\omega$ almost everywhere in $\omega$. Since the constant vector $\gamma_\omega$ depends only on the subdomain $\omega$, and $\omega$ is arbitrary, we have that
\be\label{rel:F0}
\W[v]=0 \quad \textrm{ if, and only if, } \quad \exists \gamma\in\RR^3
:\, v =(n_0\cdot e_3) \gamma \textrm{ \; in } \Omega. % x\in\Omega, 
\ee

Next, consider the following constrained minimisation problem:
\begin{equation}
\lambda_1:=\inf_{v\in H^1_0(\Omega,\RR^3)}\left\{\W[v]\, :\, \|v\|_{L^2(\Omega)}=1, -|v|\leqslant 2 v\cdot n_0\leqslant 0\textrm{  in }\Omega\right\}.
\end{equation}
Standard arguments show that $\lambda_1$ is achieved by some $ v_*\in H^1_0(\Omega,\RR^3)$ with $\|v_*\|^2_{L^2(\Omega)}=1$. We claim that $\lambda_1>0$. Indeed, assume for contradiction that $\lambda_1=\W(v_*)=0$. Then, from \eqref{rel:F0}, we get $v_*= (n_0\cdot e_3) \gamma$  for some fixed $\gamma \in \RR^3$, so that the constraint $v_*\cdot n_0\leqslant 0$ reads as
\begin{equation} \label{eq:identalpha}
\gamma\cdot n_0\leqslant 0.
\end{equation} 
On the other hand, the boundary data $n_b$ has nonzero degree, and therefore for any  $e \in\Sphere^1 \times\{0\}\subset\Sphere^2$, there exists a sequence $(x_j)_{j\in\NN}$ in $\Omega$ such that $x_j\to x_b\in\partial\Omega$ and $n_0(x_j)\to e$. Hence, from \eqref{eq:identalpha}, $\gamma\cdot e \leqslant 0$ for every $e\in \Sphere^1\times\{0\}$.  Taking this into account as well as the fact that $(n_{0}\cdot e_3) > 0$ in $\Omega$, we 
must have  $\gamma=-r e_3$ for some positive $r$. But then, %On the other hand, 
the condition $-|v|\leqslant 2 v\cdot n_0$ implies that $0<n_{0}\cdot e_3\leqslant  \frac12 $ in $\Omega$, and  this cannot happen because, otherwise, since $n_0$ is continuous, we would contradict the assumption that $n_0|_{\partial\Omega}=n_b$ has nonzero degree.
Thus,  we obtain  
\be
\int_\Omega |\nabla v|^2-|\nabla n_0|^2 |v|^2 \geqslant \lambda_1\int_\Omega |v|^2\, %\geqslant C\int_\Omega |\nabla n_0|^2\varphi^2\,dx
\ee
with  $\lambda_1 > 0$, provided $v \in H^1_0(\Omega, \RR^3)$ satisfies the inequality constraint $-|v| \leqslant  2 \varphi\cdot n_0 \leqslant 0$.
This implies that 
\be
\int_\Omega |\nabla v|^2-|\nabla n_0|^2 |v|^2 \geqslant \beta \int_\Omega |\nabla n_0|^2 |v|^2\, %\geqslant C\int_\Omega |\nabla n_0|^2\varphi^2\,dx
\ee
where $\beta = \lambda_1/ \|\nabla n_0\|^2_{L^\infty(\Omega)}> 0$ (we recall that $n_0$ is smooth), and thereby
\be
\frac{1}{1+\beta} \int_\Omega |\nabla v|^2 \geqslant \int_\Omega |\nabla n_0|^2|v|^2\, .\ee
Substituting the preceding into \eqref{eq: F defined}, we get that
\be \W[v] \geqslant \frac{\beta}{1+\beta} \int_\Omega |\nabla v|^2. \ee
The claimed relation \eqref{eq:claimcoercivitydif}  follows on setting, $\alpha^2:=\frac{\beta}{1+\beta}$, $v := n_\eps - n_0$, and noting that the inequality constraint is satisfied for all sufficiently small $\eps$.
%, and making use of the Poincar\'e inequality.
\end{proof}

\subsubsection{Convergence estimates} \label{subsubsec: parameterisation}
The expression \eqref{eq:Ge} of the energy $\mcG_{\e}$ reads, in extended form, as
\begin{equation}\label{eq:DirichplusbulkGeps}
\mcG_\e[Q_\e] = \frac{1}{2\e^2} \int_\Omega  |\nabla Q_\eps|^2 -  |\nabla Q_0|^2 + \frac{1}{\e^4} \int_\Omega  \tilde f_{bulk} (Q_\e).
\end{equation}
We consider  separately the difference in the Dirichlet  and bulk potential energies of $Q_\e$ and $Q_0$. We first focus on the bulk energy and derive an equivalent expression of the bulk potential in terms of a suitable quadratic form. Precisely, let $\{Q_\eps\}_{\eps>0}\subseteq\mcU$ be an {\pseudomin} family. According to Lemma~\ref{prop: parameterisation}, there exist $n_\e\in W^{1,\infty}(\Omega, \Sphere^2)$, $\myrho_\e \in W_0^{1,\infty}(\Omega, \RR^3)$,  such that 
\begin{equation}\label{eq:representQeps}
Q_\eps =  Q[n_\e] + \eps^2 P^\nor_\e,\quad P^\nor_\eps :=R_{n_\eps} \Vc{\sum_{j=1}^3 \myrho_{\eps j} F_j}{\myrho_\e} R_{n_\eps}^t. 
\end{equation}
Hence, $Q_\eps = R_{n_\eps} \left(V_+ + \eps^2 
\Vc{\sum_{j=1}^3 \myrho_{\eps j}F_j}{\myrho_\e}
\right)R^t_{n_\eps}$, with $V_+ := s_+ \left(e_3 \otimes e_3 - \third I\right) \in \mcS_*$. From the rotational invariance of $\tilde f_{bulk}$ it follows that $\tilde f_{bulk}(Q_\eps) = \tilde f_{bulk}\left( V_+  + \eps^2 \Vc{\sum_{j=1}^3 \myrho_{\eps j}F_j}{\myrho_\e} \right)$.
A straightforward calculation yields
\be \label{eq: f expanded}
\tilde f_{bulk}(Q_\eps) =   \frac{\eps^4}{2} B_\eps \myrho_\eps\cdot \myrho_\eps \, ,
\ee
where $B_\eps := B_0 + \eps^2 (\myrho_{\eps}\cdot e_3) B_1 + \eps^4 |\myrho_\eps|^2 B_2$, with $B_0 = \mathrm{diag}(\mu,\mu,\nu)$ given by \eqref{eq:mcH_0 defn}, and
%\begin{equation}
\begin{equation}
 B_1 = \textstyle \sqrt{\frac83} s_+ c^2  I +  \sqrt{\frac23} b^2 \, \mathrm{diag}\left(1,1,\frac{1}{3}\right), \quad B_2 = \frac{c^2}{2} I. \label{eq: Bj}
\end{equation}
%
%\begin{gather}
% B_0 = \left(\begin{array}{ccc}\mu & 0 & 0 \\0 & \mu & 0 \\0 & 0 & \nu\end{array}\right), \quad
% B_1 = \sqrt{\frac83} s_+ c^2  I +  \sqrt{\frac23} b^2\left(\begin{array}{ccc}1 & 0 & 0 \\0 & 1 & 0 \\0 & 0 & -1/3\end{array}\right), \quad B_2 = \frac{c^2}{2} I,\label{eq: Bj} \\
%  \mu =  b^2 s_+, \quad  \nu  =  \frac13 b^2 s_+ + 2a^2. \label{eq: mu and nu}
%\end{gather}
%\end{equation}
We note that $\mu$ and $\nu$ are the coefficients of the  second  variation of $\tilde f_{bulk}$ about its minimum due to biaxial and uniaxial perturbations respectively. % ({\color{blue} equivalently, they are the  nonvanishing principal curvatures of the graph of the bulk energy at a point on the limit manifold).} 
Moreover, from Lemma~\ref{prop: parameterisation}, it follows that $B_\eps \rightarrow B_0$ uniformly.  Since $B_0 = \diag(\mu, \mu, \nu)$ is positive definite, it  follows that $B_\eps$ is  positive definite  for sufficiently small  $\e$.

%\noindent{\bf Bound on Dirichlet part.} Using Lemma~\ref{prop: parameterisation}, we write %Let $Q_\e \in W^{1,\infty}(\Omega, \mcS_0)$ and $Q_\e \to Q_0$ in $W^{1,p}(\Omega, \mcS_0)$, $1 \leqslant p < \infty$. Using parameterisation of $Q_\eps$ as in \eqref{eq: param of Q_eps 1}--\eqref{eq: P_e } we write
%\begin{equation}
%\label{eq: Q_eps decomp }
%Q_\eps =s_+ \left( n_\e \otimes n_\e -\frac13 I \right) + \eps^2 P_\eps^\nor,  \text{ where } P_\eps^\nor = \sum_{j=1}^3 \myrho_{\e j} R_{n_\eps} F_j R_{n_\eps}^t.
%\end{equation}

Next, we plug the representation of $Q_\e$ given by \eqref{eq:representQeps} into the Dirichlet part of $\mcG_\e$ (cf.~\eqref{eq:DirichplusbulkGeps}), and we expand the energy. In doing this, we note that $P^\nor_\e $ is in $H^1_0 (\Omega, \mcS_0)$ because $n_\eps$ and $n_0$ coincide on $\partial \Omega$. After a simple calculation we obtain the identity
\begin{align} \label{eq: S +T}
\frac{1}{2}\int_\Omega  |\nabla Q_\eps|^2 - |\nabla Q_0|^2  &= s_+^2 \int_\Omega |\nabla n_\eps|^2 - |\nabla n_0|^2  \nonumber   \\ 
& \qquad  +s_+ \eps^2 \int_\Omega \nabla (n_\eps \otimes n_\eps) : \nabla P^\nor_\eps+ \frac{\e^4}{2} \int_\Omega |\nabla P^\nor_\e|^2,
\end{align}
%\end{equation}
% where the expression $\Delta (n_\eps \otimes n_\eps)$ is understood in the sense of $H^{-1} (\Omega, \mcS_0)$.   
Next, recalling that $P^\nor_\e=R_{n_\eps} \Vc{\sum_{j=1}^3 \myrho_{\eps j} F_j}{\myrho_\e} R_{n_\eps}^t$ is symmetric, we get
\begin{align}\label{eq: eq eq}
 \frac{1}{2} \int_{\Omega} \nabla (n_{\varepsilon} \otimes n_{\varepsilon})
: \nabla P_{\varepsilon}^{\bot} &=  
\sum_{i = 1}^2 \int_{\Omega} \partial_i n_{\varepsilon} \otimes
n_{\varepsilon} : \partial_i P \\
 &= \sum_{i = 1}^2 \int_{\Omega} [\partial_i (P_{\varepsilon}^{\bot}
n_{\varepsilon}) - P_{\varepsilon}^{\bot} \partial_i n_{\varepsilon}] \cdot
\partial_i n_{\varepsilon} \\
&= - \int_{\Omega}\nabla n_{\varepsilon} \otimes \nabla n_{\varepsilon} :
P_{\varepsilon}^{\bot} + \sqrt{2 / 3}\int_{\Omega}  (\rho_{\varepsilon} \cdot e_3) | \nabla
n_{\varepsilon} |^2 ,
\end{align}
%\begin{align}\label{eq: eq eq}
%\frac12 \Delta (n_\eps \otimes n_\eps) : P^\nor_\eps &=   \nabla n_\eps \otimes \nabla n_\eps : P^\nor_\eps + \Delta n_\eps \otimes n_\eps :  P^\nor_\e \\
% &=  \nabla n_\eps \otimes \nabla n_\eps : P^\nor_\eps - {\textstyle \sqrt{\frac23}} |\nabla n_\eps|^2  (\myrho_{\eps}\cdot e_3) ,
%\end{align}
the last equality being a consequence of the fact that $n_{\eps}$ is an eigenvector of $P^\nor_\e$, and of the constraint $|n_\e| = 1$. %, which implies that $-\Delta n_\e \cdot n_\e = |\nabla n_\e|^2$.
%\footnote{\color{blue} JR: This point about $n_\e$ being an eigenvector of $P^\nor_\e$ is crucial.  I think this is where the related parameterisation  from INSZ2017  (in terms of $\psi$ and $P$) might run into difficulty.  There, you would write $Q_\e$ as a uniaxial piece with director $n_\e$ plus a component $\eps^2 \widetilde{P^\nor_\e}$ that is perpendicular to the tangent space at $Q_0$. The difference $\widetilde{P^\nor_\e} - P^\nor_\e$ is given by a quantity roughly of the form $\eps^2 \myrho_\e^2$.  But $n_\e$ is not an eigenvector of $\widetilde{P^\nor_\e}$ (because of this $O(\eps^2)$ difference).  So you can't integrate by parts in \eqref{eq: eq eq} (you don't know about $\Delta n_\e$ in directions other than $n_\e$).  Taking all factors of $\eps^2$ into account, you should get a residual term in $\mcG_\e$  roughly of the form $\eps^2 \nabla n_0  \nabla (\myrho_\e^2)$.  It's not clear to me that it can be controlled by $\eps^2 \myrho_\e \to 0$ in $W^{1,\infty}$.}   
Eventually, introducing the vector-valued function  $b_\eps \in L^\infty(\Omega, \RR^3)$ defined by
\begin{align} \label{eq:beps}
b_\e\cdot e_j &:=-2 s_+ \nabla n_\eps \otimes \nabla n_\eps :  R_{n_\eps} F_j R_{n_\eps}^t,  \ \ j=1,2,  \\ 
b_\e\cdot e_3&:= \sqrt{6} s_+ |\nabla n_\eps|^2, \label{eq:beps1}
\end{align}
we get $-s_+  \Delta (n_\eps \otimes n_\eps) : P^\nor_\eps = b_\eps \cdot \myrho_\eps $.
%\be
%-s_+ \int_\Omega \Delta (n_\eps \otimes n_\eps) : P^\nor_\eps = \int_\Omega b_\eps \cdot \myrho_\eps \, .
%\ee
Overall, the energy $\mcG_\e$ can be decomposed in the form
\begin{align} \label{eq:energyrepresentation}
\mcG_\e[Q_\e] &= \frac{s_+^2}{\e^2} \left( \|\nabla n_\eps\|^2_{L^2} - \|\nabla n_0\|^2_{L^2}\right)  + \mcH_\e[n_\e,\myrho_\e]  + \frac{1}{2} \eps^2 \|\grad P^\nor_\e\|^2_{L^2} \, ,
\end{align} 
with
\begin{align} \label{eq:energyrepresentationHeps}
\mcH_\e[n_\e,\myrho_\e] &:=\int_\Omega \frac{1}{2}   B_\eps \myrho_\eps\cdot \myrho_\eps +  b_\e \cdot \myrho_\e  .
\end{align} 
Combining the above representation \eqref{eq:energyrepresentation} with Lemma~\ref{lem: minimality of n_0}, %and making use of Young's inequality for products, 
we obtain 
\begin{align} \label{LBin}
\mcG_\e[Q_\e] &\geqslant \frac{1}{\e^2}  {\alpha^2 s_+^2}  \|n_\eps - n_0\|^2_{H_0^1}  + \mcH_\e[n_\e,\myrho_\e]  + \frac{1}{2} \eps^2 \|\grad P^\nor_\e\|^2_{L^2}  \\ 
&\geqslant  \frac{1}{\e^2}  {\alpha^2 s_+^2}  \|n_\eps - n_0\|^2_{H_0^1} + \beta\| \myrho_\e \|^2_{L^2} - \gamma\| b_\e \|^2_{L^2} + \half \eps^2 \|\grad P^\nor_\e\|^2_{L^2} ,
\end{align} 
for some $\beta, \gamma>0$ independent of $\e$. Next, Lemma~\ref{prop: parameterisation}, assures that $\| b_\e \|_{L^2}$ is bounded independently of $\eps$ and,  therefore, the bound $\mcG_\e[Q_\e] \leqslant C$ implies that $\| n_\e - n_0 \|_{H_0^1} \leqslant C \e$, that is, \eqref{eq: n_eps - n_0 estimate}.
On the other hand,  Lemma~\ref{prop: parameterisation} also assures that $\|\myrho_\eps\|_{L^2}$  is bounded  independently of $\eps$, so there exists $\myrho \in L^2(\Omega, \RR^3)$ such that
\be \label{eq: weak eta limit lim inf}
\myrho_\eps  \rightharpoonup \myrho \quad \text{weakly in } L^2(\Omega, \RR^3).\ee
This, used in \eqref{eq:representQeps}, implies \eqref{eq: normal component of P weak}. This concludes the proof of part (\emph{ii}) of Theorem~\ref{thm:gexp2}.

\subsection{Proof of (\emph{iii}): lower bound and the existence of recovery sequences} \label{buildcutoff}
We note that \eqref{LBin} holds for any {\pseudomin} family $Q_\eps =  Q[n_\e] + \eps^2 P^\nor_\e$ having $n_\e$ for principal eigenvectors. After that, taking into account that $B_0$ is positive definite, by standard lower semicontinuity arguments we get  %{\color{red} immediately}
\be \label{eq: weak limit inequality}
\liminf_{\eps\to 0} \mcG_\e [Q_\e] \geqslant 
\liminf_{\eps\to 0} \mcH_\e[n_\e,\myrho_\e]\geqslant  \mcH_\e[n_0,\myrho]=\int_\Omega \half B_0 \myrho \cdot \myrho + b_0 \cdot \myrho,
\ee 
where $B_0$ and $b_0$ are given by \eqref{eq:mcH_0 defn}.

To proceed, we observe that since $\Omega$ is a Lipschitz domain, it admits a family of {Hopf} cutoff functions~\cite{hopf1955nonlinear}, i.e., compactly supported smooth functions $\chi_{\e} \in C_0^{\infty} (\Omega)$ such that, for any sufficiently small $\e >0$, we have: $\chi_\e(x) =1$ if $d (x, \partial \Omega) \geqslant \e$, $ \chi_{\e} \rightarrow 1$ strongly in $L^2 (\Omega)$, and $ \| \nabla \chi_{\e} \|_{L^{\infty} (\Omega)} \leqslant C \e^{- 1}$ for some positive constant $C>0$ independent of $\e$.
Then we define, for any  $ \myrho \in L^2(\Omega, \RR^3)$,
\be
 Q_\e = Q_0 + \e^2  P^\nor_\e,\quad  P^\nor_\e =   R_{n_0}V_{\myrho_\e} R_{n_0}^t,
\ee
where $\myrho_{\e} = \chi_\e \zeta_\e$, $\zeta_\e \in C^\infty(\Omega,\RR^3)$ is such that $\zeta_\e \to \myrho$ in $L^2(\Omega,\RR^3)$, and $\|\nabla \zeta_\e\|^2_{L^2(\Omega)} \leqslant C \e^{-1} $.
The convergence relations \eqref{eq: n_eps - n_0 estimate}, \eqref{eq: normal component of P weak} are trivially satisfied because for any $\e>0$ the director $n_\e$ is the principal eigenvector of $Q_\e$. In particular, a direct computation yields
\bea \label{eq:recoverysequence}
\lim_{\e\to 0}\mcG_\e[Q_\e] &=   \lim_{\e\to 0} \left( \int_\Omega  \frac12 \left( B_\eps \myrho \cdot \myrho +   b_\e \cdot  \myrho\right) \chi_\e  + \frac{\e^2}{2} \int_\Omega |\nabla  P^\nor_\e|^2 \right) .
\eea
Denoting by $\Omega_{\e} \assign \{ x \in \Omega : d (x,
  \partial \Omega) < \e \}$ the tubular neighbourhood of $\partial
  \Omega$ of radius $\e$, we obtain for $\e$ sufficiently small, the existence of a positive constant $C_0$ depending only on $n_0$ such that
  \begin{equation}
    \frac{\e^2}{2} \int_{\Omega} \left| \grad  P_\eps^\nor \right|^2
    \leqslant  C_0 \left( \e^2 \int_{\Omega_{\e}} \left| \zeta_\e \grad \chi_{\e}  \right|^2+{\e^2\int_\Omega  \left| \chi_{\e} \grad \zeta_\e \right|^2 +|\chi_\e  \zeta_\e |^2} \right) \to 0 . \label{eq:tempestimateHOPF}
  \end{equation} 
Combining the previous estimate with \eqref{eq:recoverysequence}, and recalling the definition of $B_\e$ and $\xi_\e$, we infer that 
\begin{equation} \label{eq:gammalimsup}
\lim_{\e\to 0} \mcG_{\e} [Q_{\e}] =  \mcH_0 [n_0, \myrho].
\end{equation}
This establishes (\emph{iii}) of Theorem~\ref{thm:gexp2}.

\subsection{Proof of statement (\emph{iv}): $\Gamma$- convergence and convergence estimates for the  minimisers}
The $\Gamma$-convergence of $\mcG_\e$ to $\mcH_0[n_0,\myrho_0]$, with $\myrho_0 := { -B_0^{-1}} \, b_0$, is clear from the lower bound \eqref{eq: weak limit inequality} and the upper bound \eqref{eq:gammalimsup}. It remains to prove the convergence estimates for the minimisers.
Let $\{ Q^*_\e \}_{\e>0} \subseteq W^{1, \infty} (\Omega, \mcS_0)$ be a family of minimisers of $\mcE_\eps$.
According to Lemma~\ref{prop: parameterisation}, $Q^*_\e$ may be expressed in terms of its principal eigenvector, $n^*_\e\in W^{1, \infty} (\Omega, \Sphere^2)$, and the vector-valued function $\myrho^*_\e \in W_0^{1,\infty}(\Omega,\RR^3)$.
Precisely, we have
\begin{equation}
\label{eq: parameterisationminimizer }
Q^*_\e  =  Q[n^*_\e] + \eps^2 P^{*\nor}_\eps,\quad
P^{*\nor}_\eps :=R_{n^*_\eps} \Vc{\sum_{j=1}^3 \myrho^*_{\eps j} F_j}{\myrho^*_\e} R_{n^*_\eps}^t,
\end{equation}
with $\e^2 \myrho^*_\e\to 0$ in $W^{1,p}_0(\Omega,\RR^3)$.
Since $\mcG_\e[Q^*_\e ]$ is bounded, it follows from the same argument that led to~\eqref{eq: weak eta limit lim inf}, that, perhaps up to a subsequence,  $\myrho^*_\e$ converges weakly in $L^2(\Omega, \RR^3)$ to some $\myrho^{*}$. In particular, we have
\begin{align}
B_\e & \to B_0\quad \text{strongly in }L^2(\Omega,\RR^{3\times 3})\,,\\
\myrho_{\e}^{*} & \rightharpoonup \myrho^{*} \quad \text{weakly in }L^2(\Omega,\RR^3),
\end{align}
where $B_\e \assign B_0 + \eps^2(\myrho_{\eps}^{*} \cdot e_3) B_1 + \eps^4 | \myrho_{\eps}^{*} |^2 B_2$.
Since $B_0$ is positive definite, by the lower semicontinuity of the norms and \eqref{eq:energyrepresentation}, we
have that
\begin{equation}
  \label{eq: weak limit inequality 2} \liminf_{\eps \to 0} \mcG_{\e}
  [Q^{*}_{\e}] \geqslant \liminf_{\eps \to 0} \mcH_{\e}
  [n_{\e}^{*} , \myrho_{\e}^{*} ] \geqslant \mcH_0 [n_0, \myrho^{*} ]
  = \int_{\Omega} \frac{1}{2} B_0 \myrho^{*}  \cdot \myrho^{*}  + b_0\cdot \myrho^{*}  \geqslant \mycal{H}_0 [n_0, \myrho_0],
\end{equation}
with $\myrho_0 \assign \mathrm{argmin}_{\sigma \in \RR^3} (\frac{1}{2} B_0
\sigma \cdot \sigma + b_0 \cdot \sigma) = - B_0^{- 1} b_0$, and $b_0$ given by \eqref{eq:mcH_0 defnb}. Also, by (\emph{iii}), there exists
an {\pseudomin} recovery  family $\{Q_{\e} \}_{\e > 0} \subseteq \mcU$
such that $\lim_{\e \to 0} \mcG_{\e} [Q_{\e}] =\mcH_0 [n_0,
\myrho_0]$. Since $\mcE_{\e} [Q_{\e}] \geqslant \mcE_{\e}
[Q^{*}_{\e}]$, it follows that  $\lim_{\e \to 0} \mycal{G}_{\e}
  [Q^{*}_{\e}]=\mcH_0 [n_0, \myrho_0]$ because
\begin{equation}
  \label{eq: G} \mcH_0 [n_0, \myrho_0] = \lim_{\e \to 0}
  \mcG_{\e} [Q_{\e}] \geqslant \limsup_{\e \to 0} \mycal{G}_{\e}
  [Q^{*}_{\e}] \geqslant \liminf_{\e \to 0} \mcG_{\e}
  [Q^{*}_{\e}] \geqslant \mcH_0 [n_0, \myrho_0] .
\end{equation}
From \eqref{eq:energyrepresentation} and the preceding, we deduce that
\begin{eqnarray}
  \mcG_{\e} [Q_{\e}^{*}] -\mcH_0 [n_0, \myrho_0] & = &
  \frac{s_+^2}{\e^2}  \left( \| \nabla n_{\eps}^{*}  \|^2_{L^2} -\| \nabla
  n_0 \|^2_{L^2} \right) + \frac{1}{2} \eps^2 \| \grad P^{\nor}_{\e}
  \|^2_{L^2} \nonumber\\
  &  &  \quad \quad \quad \quad \quad \quad \quad
  +\mcH_{\e} [n_{\eps}^{*} , \myrho_{\e}^{*} ] -\mcH_0
  [n_0, \myrho_0] .  \label{eq:jdf1}
\end{eqnarray}
On the other hand, since $\rho_0 = - B_0^{- 1} b_0$, we have
\begin{eqnarray}
  \mcH_{\e} [n_{\eps}^{*} , \myrho_{\e}^{*} ] -\mcH_0
  [n_0, \myrho_0] & = & \int_{\Omega} \frac{1}{2} B_{\e}
   \myrho_{\e}^{*}  \cdot \myrho_{\e}^{*}  + b_\e
   \cdot \myrho_{\e}^{*}  - \int_{\Omega}
  \frac{1}{2} B_0 \myrho_0 \cdot \myrho_0 + b_0 \cdot \myrho_0 \nonumber\\
  & = & \frac{1}{2} \int_{\Omega} B_0^{- 1} b_0 \cdot b_0 - B_\e^{- 1} b_\e^{*} \cdot b_\e^{*} \nonumber \\
  &  & \qquad\qquad+ \frac{1}{2}  \int_{\Omega} B_\e
  \left( \myrho_{\e}^{*}  + B_\e^{- 1} b_\e^{*} \right) \cdot \left( \myrho_{\e}^{*}  + B_{\e}^{- 1} b_\e^{*} \right) . \nonumber
\end{eqnarray}
with $b_\eps^{*}$ defined as in \eqref{eq:beps}, \eqref{eq:beps1}.  Since $n_{\varepsilon} \rightarrow n_0$ strongly in $W^{1, p} (\Omega,
\Sphere^2)$, it follows that $b_{\varepsilon}^{*} \rightarrow b_0$ strongly in
$L^2 (\Omega, \RR^3)$. Hence,
\[ 0 \leqslant \frac{1}{2}  \int_{\Omega} B_0 \left( \myrho^{*} + B_0^{- 1} b_0
   \right) \cdot \left( \myrho_0 + B_0^{- 1} b_0 \right) \leqslant
   \liminf_{\varepsilon \rightarrow 0} \left( \mycal{H}_{\e} [n_{\eps}^{*},
   \myrho_{\e}^{*}] -\mycal{H}_0 [n_0, \myrho_0] \right) . 
   \]
Summarizing, from the previous inequality and \eqref{eq:jdf1}, we infer that
\begin{eqnarray}
  0 \geqslant \lim_{\varepsilon \rightarrow 0} \left( \mycal{G}_{\e}
  [Q_{\e}^{*}] -\mycal{H}_0 [n_0, \myrho_0] \right) & \geqslant &
  \lim_{\varepsilon \rightarrow 0} \left[ \frac{s_+^2}{\e^2}  \left( \| \nabla
  n_{\eps}^{*} \|^2_{L^2} -\| \nabla n_0 \|^2_{L^2} \right) + \frac{1}{2}
  \eps^2 \| \grad P^{\nor}_{\e} \|^2_{L^2} \right] \nonumber\\
  &  &  \quad \quad \quad \quad + \frac{1}{2}  \int_{\Omega}
  B_0 \left( \myrho^{*} + B_0^{- 1} b_0 \right) \cdot \left( \myrho_0 + B_0^{- 1}
  b_0 \right) . \nonumber
\end{eqnarray}
As each term on the right-hand side is nonnegative, they separately
vanish in the limit $\e \to 0$. In particular, $\myrho^{*} = - B_0^{- 1} b_0 = \myrho_0$ and, by Lemma~\ref{lem: minimality of n_0}, $(n_{\e}^{*} - n_0) / \e \to 0$ in
$H^1 (\Omega, \Sphere^2)$. This
establishes the convergence estimates \eqref{eq: n_e - n_0 = o(eps) thm 2.1} and
\eqref{eq: normal component minimiser}, and completes the proof of Theorem~\ref{thm:gexp2}.

\section{The case $b^2 = 0$ and non-orientable boundary conditions.}\label{sec: b = 0 results}
Generally speaking, for non-orientable boundary conditions on a two-dimensional domain, the Landau-de Gennes energy $\mcE_\eps[Q_\eps]$ of a minimising sequence $Q_\eps$ diverges logarithmically as $\eps \rightarrow 0$ (cf.~\cite{Canevari2017line}), and an analysis different from the one developed in this paper is required to describe the small-$\eps$ behaviour.  However, in the special case  $b^2 = 0$ in the Landau-de Gennes bulk potential, results similar to those of Section~\ref{sec:math} can be established.  The key point is that $b^2 = 0$ corresponds to a degeneracy in the bulk potential, which reduces to a function of  $\tr\, Q^2$ only, 
\begin{equation}
\label{eq: b = 0 bulk potential }
\tilde f_{bulk} = \left( \frac{c}{
  4\e} \right)^2 \left( | Q |^2 - \frac{a^2}{c^2} \right)^2,
\end{equation} 
with  four-dimensional limit manifold
\be \label{eq:limmanifold4}
\mcS_0^*:=\{Q\in \mcS_0; |Q|^2= (2/3) s_+^2=a^2/c^2\}
\ee
 homeomorphic to $\Sphere^4$, as opposed to $\RR P^2$ in the generic case.  

In addition to taking boundary conditions to lie in the degenerate limit manifold $\mcS_0^*$, we restrict them to be planar prolate uniaxial, in analogy with the $b^2 \neq 0$ case.  This allows for a convenient generalisation of degree to non-orientable boundary conditions, as follows.   Let $\mcB_Q$ denote the set of planar prolate uniaxial $Q$-tensors in $\mcS_0^*$, and $\mcB_D = \{n \in \Sphere^2 \, | \, n\cdot e_3 = 0\}$ denote the set of planar directors.  The parameterisation $n \mapsto Q = s_+(n\otimes n - \third I)$ is a double covering of $\mcB_Q$ by $\mcB_D$ (since $n$ and $-n$ parameterise the same $Q$-tensor).  Since $\mcB_D$ is homeomorphic to $\Sphere^1$, it follows that $\mcB_Q$ is homeomorphic to the real projective line $\Sphere^1/\ZZ_2$, which is also homeomorphic to $\Sphere^1$ via the map
\begin{equation}
e^{i\theta}\in  \Sphere^1 \mapsto \{e^{i\theta/2},e^{i(\pi + \theta/2)}\}\in \Sphere^1/\ZZ_2.
\end{equation}
Thus, boundary conditions $Q_b \in C^1(\partial \Omega, \mcB_Q)$ may be assigned an integer degree, $\tdeg Q_b$.  If 
$\tdeg Q_b$ is even, say equal to $2m$, there exists a planar director $n_b \in  C^1(\partial \Omega, \mcB_D)$ such that $\deg n_b = m$; in this case, $Q_b$ is orientable.  In the non-orientable case, $\deg Q_b = k$ is odd; 
any $n_b$ which parameterises $Q_b$ necessarily has a discontinuity in sign, so that $n_b \notin  C^1(\partial \Omega, \mcB_D)$.  In this case, one says that $n_b$ has half-integer degree $k/2$.

 Throughout this section we assume that $\Omega$ is a bounded, simply-connected domain with $C^1$ boundary.
%
%We treat the case of $b=0$ separately due to essential features dictated by the limit manifold. 
%In the case $b=0$ the energy $\mcE_\e$ has the following explicit form 
%$ \mcE_\e$ as $\e \to 0$, with
%\begin{align}
%  \mcE_\e [Q] =   \int_{\Omega} \left[ \frac{1}{2} | \nabla Q|^2 + \left( \frac{c}{
%  4\e} \right)^2 \left( | Q |^2 - \frac{a^2}{c^2} \right)^2 \right]
%  \mathd x. 
%\end{align}
The following result can be shown in a manner similar  to that of Proposition~\ref{prop:zero}.
\begin{theorem} \label{prop:zero0 b=0}
Let $Q_b \in C^1(\partial\Omega, \mcB_Q)$  and let  $\mcU = \{Q \in H^1(\Omega, \mcS_0) \, ; \, Q|_{\partial \Omega} = Q_b\}$. 
Then, as $\e \to 0$, the following statements hold:
\begin{enumerate}[leftmargin=0.8cm,label=\textup{(}\roman*\textup{)},widest = i]
\item Let $C>0$. For any family $\{Q_\e\}_{\e>0} \subset \mcU$ such that $\mcE_\e[Q_\e] \leqslant C$ we have, possibly on a subfamily, $Q_\e \to Q$ weakly in $H^1(\Omega, \mcS_0)$ for some  $Q \in H^1(\Omega, \mcS_0^*)$.
\medskip
\item  The family of energies $(\mcE_\e)_{\e>0}$ $\Gamma$-converges to $\mathcal E_0$ in the weak topology of $H^1 (\Omega, \mcS_0)$, where
\begin{equation}
\label{eq: Gamma0}
 \mcE_0 [Q] = \begin{cases} 
\displaystyle \half \int_{\Omega}  |\nabla Q|^2& \text{ if $Q \in H^1(\Omega, \mcS_0^*)\cap \mcU$},\\
+\infty &\text{otherwise},
\end{cases}
\end{equation} 
with $\mcS_0^*$ the limit manifold defined by \eqref{eq:limmanifold4}.
\medskip\item The minimisers $\{Q_\e\}_{\e>0}$ of the problem \eqref{main_problem} converge strongly in $H^1(\Omega, \mcS_0)$ to the minimisers of the following harmonic map problem 
\be \label{eq:problemH01}
 \min_{Q\in\mcU}\mathcal E_0 [Q]. %\tag{H0}
\ee 
\end{enumerate}
\end{theorem}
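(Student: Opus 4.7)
The plan is to follow the standard $\Gamma$-convergence scheme for Ginzburg--Landau-type functionals, with the key observation that the limit manifold is now $\mcS_0^* \cong \Sphere^4$ rather than $\mcS_* \cong \RR P^2$. Since the bulk density $\tilde f_{bulk} = (c/(4\e))^2(|Q|^2 - a^2/c^2)^2$ penalizes only the deviation of $|Q|^2$ from $a^2/c^2$, the ingredients are entirely standard and essentially parallel Proposition~\ref{prop:zero}.

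For statement (\emph{i}), from $\mcE_\e[Q_\e] \leqslant C$ and $\tilde f_{bulk} \geqslant 0$ I immediately get a uniform bound on $\|\nabla Q_\e\|_{L^2}$. Combined with the fixed Dirichlet boundary data $Q_b$ (which, after extending it to an $H^1(\Omega,\mcS_0)$ competitor using a smooth lifting of $Q_b$ into $\mcS_0^*\cong \Sphere^4$, is finite in $\mcE_0$), this yields $\{Q_\e\}$ uniformly bounded in $H^1(\Omega,\mcS_0)$. Extracting a subsequence, $Q_\e \rightharpoonup Q$ weakly in $H^1$ and strongly in $L^2$. The energy bound then forces $\int_\Omega (|Q_\e|^2 - a^2/c^2)^2 \leqslant (4C/c^2)\e^2 \to 0$, so by $L^2$-convergence $|Q|^2 = a^2/c^2$ a.e., i.e. $Q \in H^1(\Omega, \mcS_0^*)$.

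For (\emph{ii}), the $\Gamma$-liminf inequality in the weak $H^1$-topology is immediate: if $Q_\e \rightharpoonup Q$ with $\liminf \mcE_\e[Q_\e] < \infty$, then by (\emph{i}) $Q\in H^1(\Omega,\mcS_0^*)\cap\mcU$, and since the bulk term is non-negative, the lower semicontinuity of the Dirichlet energy gives
\begin{equation*}
\liminf_{\e\to 0} \mcE_\e[Q_\e] \geqslant \liminf_{\e\to 0}\tfrac{1}{2}\int_\Omega |\nabla Q_\e|^2 \geqslant \tfrac{1}{2}\int_\Omega |\nabla Q|^2 = \mcE_0[Q].
\end{equation*}
For the recovery sequence I simply take the constant family $Q_\e \equiv Q$ for any $Q \in H^1(\Omega,\mcS_0^*)\cap\mcU$; since $|Q|^2 = a^2/c^2$ a.e., the bulk term vanishes identically and $\mcE_\e[Q_\e] = \mcE_0[Q]$. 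When $Q\notin H^1(\Omega,\mcS_0^*)\cap\mcU$, the liminf inequality becomes $+\infty$ by (\emph{i}), so the $\Gamma$-limit takes the value $+\infty$ by definition. Admissibility of the target class is ensured because $\pi_1(\Sphere^4) = 0$: even if $Q_b$ is non-orientable, the contractibility of $\Sphere^4$ (viewed as the codomain) together with the 2D simply-connected domain permits a smooth extension of $Q_b$ into $\mcS_0^*$, so $H^1(\Omega,\mcS_0^*)\cap\mcU \neq \emptyset$.

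Statement (\emph{iii}) is the standard consequence of equi-coercivity plus $\Gamma$-convergence: weak subsequential limits of minimisers are minimisers of $\mcE_0$. Strong $H^1$-convergence is upgraded from the weak convergence by observing that minimality and the constant recovery sequence give $\mcE_\e[Q_\e^*] \leqslant \mcE_0[Q_0^*]$, while $\Gamma$-liminf gives the reverse limit inequality, hence $\mcE_\e[Q_\e^*] \to \mcE_0[Q_0^*]$; combined with $\tilde f_{bulk}(Q_\e^*)\to 0$ in $L^1$, this forces $\|\nabla Q_\e^*\|_{L^2} \to \|\nabla Q_0^*\|_{L^2}$, and weak convergence together with convergence of norms in a Hilbert space yields strong convergence. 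There is no genuine obstacle here --- the chief conceptual point (as opposed to a technical difficulty) is that the degeneracy $b^2 = 0$ enlarges the limit manifold from $\RR P^2$ to $\Sphere^4$, and the simple-connectedness of $\Sphere^4$ is precisely what allows non-orientable boundary data to be accommodated without a logarithmic divergence of the energy.
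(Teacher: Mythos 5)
Your proposal is correct and follows precisely the standard $\Gamma$-convergence scheme that the paper itself invokes: the paper gives no proof of this theorem, stating only that it ``can be shown in a manner similar to that of Proposition~\ref{prop:zero}'', whose proof is omitted as standard, and your compactness/liminf/constant-recovery-sequence/energy-convergence argument is exactly that route. The only slip is calling $\Sphere^4$ ``contractible'' --- it is not --- but the fact you also cite, $\pi_1(\Sphere^4)=0$, is what actually guarantees that $H^1(\Omega,\mcS_0^*)\cap\mcU$ is non-empty for non-orientable boundary data, so the argument stands.
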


%\begin{remark}\marginnote{VS: Why we need it here, I don't think we use it in any way}
%The proof of Proposition \ref{prop:zero0 b=0} is standard, and is omitted. Similarly as in the $b^2\not=0$ case, fairly extended arguments that are nevertheless straightforward, taking into account the existing literature (see for instance \cite{Majumdar10, Nguyen13, INSZ_new}) allow to show that for minimisers $\{Q_\e\}_{\e>0}$ of the problem \eqref{main_problem} there exists an $\e_0>0$ such that 
%\be\label{bd:uniformLipschitzbeq0}
%\sup_{0<\eps<\eps_0} \|Q_\e\|_{W^{1, \infty} (\Omega, \mcS_0)}<+\infty\, .
%\ee
%
%\end{remark}

\begin{remark}\label{rem: unique minimiser Q_0 b = 0}
Note that a planar uniaxial $Q$-tensor  $Q = s_+(n \otimes n - \third I)$ has the following expression in terms of the orthonormal basis \eqref{eq:Fbasis}:
\begin{equation}
\label{eq: uniaxial Q-tensor expansion }
Q = \sum_{j=1}^5 c_j F_j = \frac{s_+}{\sqrt 2}\left( (n_1^2 - n_2^2) F_1 + 2n_1 n_2 F_2 - \frac{1}{\sqrt 3} F_3\right),
\end{equation}
where $n_i = n\cdot e_i$, $i  = 1,2$.  
Thus, $c_4$ and $c_5$ vanish, while $c_3$ is fixed and negative. It follows that every element $Q_b \in C^1(\partial\Omega, \mcB_Q)$ admits a representation of the form $Q_b = \sqrt{2/3} s_+ \sum_{j=1}^{3} c_{bj} F_j$, for some vector field $c_b\in C^1(\partial\Omega, \Sphere^2)$. After that, standard arguments based on the maximum principle show the existence of a unique minimiser of problem \eqref{eq:problemH01}; it can be expressed as
\be \label{eq: form of Q_0 for b = 0}
Q_0 = \sqrt{2/3} s_+ \left(c_{01} F_1 + c_{02} F_2 + c_{03} F_3\right),
\ee
where $c_0 \in H^1(\Omega, \Sphere^2)$ 
solves the following minimisation problem:
\be \label{eq: min problem for c_0}
\min \left\{ \int_\Omega | \nabla c\,|^2 \, : \, {c \in H^1(\Omega, \Sphere^2),\  c= c_b \hbox{ on } \partial \Omega}\right\}.
\ee
In particular, $c_0$ is an $\Sphere^2$-valued harmonic map, i.e., $-\Delta c_0 = |\nabla c_0|^2 c_0$.  
We note that $Q_0$ is biaxial unless one of the following conditions holds: \emph{i}) $c_0 \cdot e_3 = -1/\sqrt{3}$, in which case $Q_0$ is planar uniaxial; \emph{ii}) $c_0 \cdot e_3 = -1$, in which case $Q_0 = -\sqrt{2/3} s_+  F_3$ is oblate uniaxial with director $e_3$; or \emph{iii}) $c_0 \cdot e_3 = 1$, in which case $Q_0 = \sqrt{2/3} s_+  F_3$ is prolate uniaxial with director $e_3$.  In fact, the maximum principle implies that $c_0 \cdot e_3 < 0$ %\marginnote{JR: maybe $c_{03}$ rather than $c_0 \cdot e_3$ AZ: it seems clearer to me the way it is}}
, so that the last possibility is excluded.
%{\color{red} [REFERENCE?]}
\end{remark}
We need to go to the next-order term in the $\Gamma$-asymptotic expansion of
the energy $\mcE_\e$ and define the %next order energy 
 renormalised relative energy
as in \eqref{eq:Ge},
\begin{equation}
 \mcG_{\e} [Q] = \frac{1}{\e^2} \left( \mcE_\e[Q] - \mcE_0 [Q_0] \right),
 \end{equation}
where $Q_0$ is the unique minimiser of the problem \eqref{eq:problemH01}; in particular,  $Q_0$ is a harmonic map.
%{\color{red} \begin{remark}\marginnote{I think we already said it in the case $b>0$, should remove it here} The notion of $\Gamma$-expansion was introduced by Anzellotti and Baldo in \cite{anzellotti1993asymptotic}. The framework permits to derive selection criteria for minimisers when the leading order $\Gamma$-limit manifests degeneracies in the energy landscape. However, our leading order $\Gamma$-limit $\mcE_0$ is not subject to this phenomenon because it admits a unique minimiser $Q_0$, and this implies that the second-order $\Gamma$-limit will be infinite at every point but $Q_0$. In order to gain finer details on the convergence behaviour of the minimising sequences, a different approach must be used. Precisely, we investigate the behaviour of the energy on all possible sequences converging to $Q_0$, and we aim at a description of the limiting energy able to distinguish among different sequences. The second-order $\Gamma$-limit,  then follows as a particular case.\end{remark}}
Information about the expansion of the energy $\mcE_\e$ is given by the following result.
\begin{theorem}\label{thm:gexp}
 Let  $Q_0$ be a minimiser of $\mcE_0$ over $\mcU$ as  in the problem \eqref{eq:problemH01}. The following assertions hold:
\begin{enumerate}[leftmargin=0.8cm,label=\textup{(}\roman*\textup{)},widest = i]
\item Let $C>0$. For any family $\{Q_{\e}\}_{\e>0} \subseteq \mcU$ such that  $\mcG_{\e} [Q_{\e}] \leqslant C$, there exist $P\in H_0^1(\Omega, \mcS_0)$, pointwise orthogonal to $Q_0$, and $\myrho\in L^2(\Omega)$,
for which, possibly on a subsequence,
\begin{align}
Q_{\e} \to Q_0 & \text{\qquad strongly in $H^1(\Omega,\mcS_0)$} \label{eq:compb01}\\
\frac{1}{\e^2} \left( Q_\eps  - Q_0 \right) : Q_0   \rightharpoonup \myrho & \text{\qquad weakly  in  $L^2(\Omega)$,}\label{eq:compb02}\\
\frac{1}{\e} (Q_\e - Q_0) \rightharpoonup P & \text{\qquad weakly  in  $H_0^1(\Omega, \mcS_0)$ with $P:Q_0 =0$\label{eq:compb03}.}
\end{align}
%\begin{itemize}
%\item $Q_{\e} \to Q_0$ in $H^1(\Omega,\mcS_0)$,
%\item $\frac{1}{\e^2} \left( Q_\eps  - Q_0 \right) : Q_0   \rightharpoonup \myrho$ weakly  in  $L^2(\Omega)$,
%\item $ \frac{1}{\e} (Q_\e - Q_0) \rightharpoonup P$ weakly  in  $H_0^1(\Omega, \mcS_0)$ with $P:Q_0 =0$.
%\end{itemize}
\item  For any $\{Q_{\e}\}_{\e>0} \subseteq \mcU$ such that \eqref{eq:compb01},\eqref{eq:compb02}, and \eqref{eq:compb03} hold, we have
\be \label{eq:en0gdf1}
 \liminf_{\e\to 0}  \mcG_{\e}[Q_\e] \geqslant \mcH[P,\myrho],
\ee
with
\be \label{eq:UB0gdf}
\mcH[P,\myrho]:= \frac12 \int_{\Omega}
  | \nabla P |^2 + \int_{\Omega}\frac{c^2}{a^2} | \nabla Q_0 |^2 \myrho + \frac{c^2}{4} \int_{\Omega} (|P|^2 + 2 \myrho)^2 .
  \ee
Also, for any $P\in H_0^1(\Omega, \mcS_0)$ pointwise orthogonal to $Q_0$, and any $\myrho\in L^2(\Omega)$, there exists a recovery family $\{Q_{\e}\}_{\e>0} \subseteq \mcU$ such that \eqref{eq:compb01}, \eqref{eq:compb02}, \eqref{eq:compb03} hold, and 
\begin{equation}
\lim_{\e \rightarrow 0} \mcG_{\e} [Q_{\e}]=\mcH[P,\myrho].
\end{equation}

\medskip
\item The family of energies $\{\mcG_\e\}_{\e>0}$ $\Gamma$-converges to $\mcG_0$ in $H^1(\Omega,\mcS_0)$, where
\be \label{eq:en0}
 \mcG_0[Q]=  \begin{cases} 
\displaystyle-\frac{c^2}{4 a^4} \int_\Omega |\nabla Q_0|^4& \text{ if $Q =Q_0$},\\
+\infty &\text{otherwise}.\end{cases} 
\ee
Moreover if $(Q_\e)_{\e>0}$ is a family of minimisers of $\mcE_\e$ on  $\mcU$ then 
\begin{align}
\frac{1}{\e^2} (Q_\e -Q_0):Q_0 \to \frac{1}{2a^2} |\nabla Q_0|^2  & \text{\qquad in $L^2(\Omega, \mcS_0)$,} \label{eq:mincon0} \\
\frac{1}{\e} (Q_\e - Q_0) \to 0  & \text{\qquad in $H^1(\Omega, \mcS_0)$.}
\label{eq:mincon1}
\end{align}
%\be \label{eq:mincon0}
%\frac{1}{\e^2} (Q_\e -Q_0):Q_0 \to \frac{1}{2a^2} |\nabla Q_0|^2  \hbox{ in } L^2(\Omega, \mcS_0), \quad  \frac{1}{\e} (Q_\e - Q_0) \to 0 \hbox{ in } H^1(\Omega, \mcS_0),
%\ee
% and
% \begin{equation}
%\min \mcG_0 (\myrho) = -\frac{c^2}{4 a^4} \int_\Omega |\nabla Q_0|^4.
%    \label{eq:fundlowerbound0}
%  \end{equation}
\end{enumerate}
\end{theorem}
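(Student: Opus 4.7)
My plan is to mirror the strategy of Theorem~\ref{thm:gexp2}, exploiting the fact that for $b^2=0$ the bulk potential $\tilde f_{bulk}(Q)=\frac{c^2}{4}(|Q|^2-a^2/c^2)^2$ depends only on $|Q|^2$, so that \eqref{main_problem} becomes a Ginzburg-Landau-type problem for $\mcS_0$-valued maps with the $4$-dimensional target $\mcS_0^*$. The two geometric facts I rely on are that the vectors transverse to $\mcS_0^*$ at $Q_0$ are exactly the multiples of $Q_0$ (those tangent satisfying $P:Q_0=0$), and that, being a minimising $\mcS_0^*$-valued harmonic map, $Q_0$ solves $-\Delta Q_0=(c^2/a^2)|\nabla Q_0|^2 Q_0$. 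Setting $R_\e:=Q_\e-Q_0\in H_0^1(\Omega,\mcS_0)$, I will decompose $R_\e=\alpha_\e Q_0+T_\e$ pointwise into its normal part ($\alpha_\e=(R_\e:Q_0)/|Q_0|^2$) and tangential part ($T_\e:Q_0=0$), and introduce the rescalings $\myrho_\e:=(R_\e:Q_0)/\e^2$, $P_\e:=T_\e/\e$ as the natural candidates for the weak limits in \eqref{eq:compb02}--\eqref{eq:compb03}.

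The central step is a controlled expansion of $\mcG_\e$. Writing $|Q_\e|^2-a^2/c^2=2(R_\e:Q_0)+|R_\e|^2$, integrating by parts the cross term $\frac{1}{\e^2}\!\int\nabla Q_0:\nabla R_\e$ and using the harmonic-map equation will yield
\begin{equation*}
\mcG_\e[Q_\e]=\int\tfrac{c^2}{a^2}|\nabla Q_0|^2\myrho_\e+\tfrac12\!\int|\nabla P_\e|^2+\tfrac{c^2}{4}\!\int\bigl(|P_\e|^2+2\myrho_\e\bigr)^2+o(1),
\end{equation*}
with the $o(1)$ collecting the $O(\e)$ contributions from $\nabla(\alpha_\e Q_0)$ and the higher-order pieces of $|Q_\e|^2-a^2/c^2$. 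Completing the square in $\myrho_\e$ rewrites this as
\begin{equation*}
\tfrac12\!\int\!\Bigl(|\nabla P_\e|^2-\tfrac{c^2}{a^2}|\nabla Q_0|^2|P_\e|^2\Bigr)+c^2\!\!\int\!\Bigl(\myrho_\e+\tfrac{|P_\e|^2}{2}+\tfrac{|\nabla Q_0|^2}{2a^2}\Bigr)^{\!2}-\tfrac{c^2}{4a^4}\!\int|\nabla Q_0|^4+o(1).
\end{equation*}
Granted an analogue of Lemma~\ref{lem: minimality of n_0} asserting that the first quadratic form is coercive on $\{P\in H_0^1:P:Q_0=0\}$, the hypothesis $\mcG_\e[Q_\e]\leqslant C$ immediately gives $\|P_\e\|_{H_0^1}+\|\myrho_\e\|_{L^2}\leqslant C$; Rellich and the 2D Sobolev embedding then supply the strong $L^4$-convergence of $P_\e$ needed to pass to the limit in the quartic term, yielding the compactness \eqref{eq:compb01}--\eqref{eq:compb03} and, by weak lower semicontinuity, the lower bound $\mcH[P,\myrho]$ of \eqref{eq:en0gdf1}. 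The recovery sequence will be built as $Q_\e=Q_0+\chi_\e(\e\widetilde P_\e+\e^2\widetilde\myrho_\e Q_0/|Q_0|^2)$ with smooth approximants $\widetilde P_\e,\widetilde\myrho_\e$ of $P,\myrho$ and a Hopf cutoff $\chi_\e$, exactly as in Section~\ref{buildcutoff}.

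For \emph{(iii)}, the $\Gamma$-limit is computed by minimising $\mcH[P,\myrho]$ in $\myrho$ pointwise (quadratic and strictly convex), which reduces $\mcH$ to the quadratic form $\tfrac12\int(|\nabla P|^2-\tfrac{c^2}{a^2}|\nabla Q_0|^2|P|^2)-\tfrac{c^2}{4a^4}\int|\nabla Q_0|^4$ in $P$ alone; its unique minimiser $P^*=0$ is supplied by the same coercivity lemma, and back-substitution gives the value $\mcG_0[Q_0]=-\tfrac{c^2}{4a^4}\int|\nabla Q_0|^4$ of \eqref{eq:en0}. The strong convergences \eqref{eq:mincon0}--\eqref{eq:mincon1} for minimising families follow by subtracting $\mcG_0[Q_0]$ from the energy identity and observing that each of the three non-negative leftover terms must vanish in the limit (the vanishing of the second term forces $\myrho_\e\to\myrho^*$ in $L^2$, while the coercive first term forces $P_\e\to 0$ in $H_0^1$ and the $O(\e^2)$ contribution $\frac{\e^2c^2}{2a^2}\int|\nabla\myrho_\e|^2$ hidden in the $o(1)$ forces $\e\nabla\myrho_\e\to 0$ in $L^2$). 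The main technical obstacle I anticipate is the coercivity lemma itself: I plan to expand $P$ along the basis \eqref{eq:Fbasis}, so that the part in $\mathrm{span}\{F_1,F_2,F_3\}$ (constrained by $P:Q_0=0$) reduces to the setting of Lemma~\ref{lem: minimality of n_0} with $c_0$ in place of $n_0$, while the scalar components along $F_4,F_5$ are handled by a Hopf factorisation $p_j=(c_0\cdot e_3)w_j$ (using $c_0\cdot e_3<0$ in $\Omega$), the zero modes $(c_0\cdot e_3)\gamma$ being ruled out by the nonzero winding of $c_b$ on the latitude $c_0\cdot e_3=-1/2$, which forces $\gamma=0$.
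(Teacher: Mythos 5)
Your proposal follows essentially the same architecture as the paper's proof: expand $\mcG_\e$ using the harmonic-map equation for $Q_0$, complete the square in the radial variable to expose $-\frac{c^2}{4a^4}\int_\Omega|\nabla Q_0|^4$ plus two non-negative remainders, deduce compactness and the lower bound from a Hardy-type coercivity estimate together with weak lower semicontinuity and the compact embedding $H^1\hookrightarrow L^4$, and build the recovery family with a Hopf cutoff. Two differences are worth recording. First, the paper does \emph{not} split $Q_\e-Q_0$ into normal and tangential parts with different scalings; it keeps the single quantity $P_\e=(Q_\e-Q_0)/\e^2$, derives the exact identity \eqref{eq:Genergynew}, and only afterwards extracts the two limits $\rho$ and $P$. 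Your splitting $R_\e=\alpha_\e Q_0+T_\e$ produces a gradient cross term of the form $\e^{-1}\int_\Omega\nabla T_\e:\nabla(\alpha_\e Q_0)$ (and, in the quartic term, the replacement of $|R_\e|^2/\e^2$ by $|P_\e|^2$), which you declare $o(1)$ without justification; the natural bound $\|\nabla(\alpha_\e Q_0)\|_{L^2}\lesssim\e$ is only available \emph{after} the a priori estimate $\|R_\e/\e\|_{H^1}\leqslant C$ that the expansion is supposed to deliver, so as written the argument is mildly circular. This is repairable — either establish the $H^1/\e$ bound first from the unsplit identity, or use $\nabla T_\e:Q_0=-T_\e:\nabla Q_0$ and an integration by parts against $\nabla\alpha_\e$ together with $\Delta Q_0\parallel Q_0$ — but it is the one step that would not survive scrutiny as stated. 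Second, for the coercivity of $\int_\Omega|\nabla P|^2-\frac{c^2}{a^2}|\nabla Q_0|^2|P|^2$ the paper has a much shorter, \emph{unconditional} argument on all of $H_0^1(\Omega,\mcS_0)$: factor $P=q_3U$ with $q_3=Q_0:F_3$, a negative solution of the linearised equation, so that the form equals $\int_\Omega q_3^2|\nabla U|^2$, and then run the compactness argument of Lemma~\ref{lem: minimality of n_0}. Your componentwise scheme (Lemma~\ref{lem: minimality of n_0} for the $F_1,F_2,F_3$ block, Hopf factorisation by $c_0\cdot e_3$ for $F_4,F_5$) works, but the constraint $P:Q_0=0$ is not needed — which is fortunate, since in the exact expansion the quadratic form acts on the full increment $R_\e/\e$ and not only on its tangential part; also, the zero modes $(c_0\cdot e_3)\gamma$ are excluded simply because $c_0\cdot e_3=-1/\sqrt{3}$ on $\partial\Omega$, with no winding argument required. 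Finally, your completed square gives $\rho^*=-\frac{1}{2a^2}|\nabla Q_0|^2$ at the minimum; the sign printed in \eqref{eq:mincon0} should be compared against this.
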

\begin{proof}
\noindent (\emph{i})  If $Q_\eps$ satisfies $\mcG_{\e} [Q_\eps] \leqslant C$, by the same argument used in the proof of the Theorem~\ref{thm:gexp2}, we get that
necessarily $Q_\e \to Q_0$ strongly in $H^1(\Omega, \mcS_0)$.
After that, let $(Q_\e)_{\e>0}$ be such that $Q_\e \to Q$ in $H^1(\Omega, \mcS_0)$. We set $P_{\e} \assign (Q_\e - Q_0) / \e^2$, so that $Q_\e = Q_0 + \e^2 P_\e$ with
$P_{\e} \in H_0^1 \left( \Omega, \mcS_0 \right)$. Plugging the expression of
$Q_\e$ into the energy $ \mcG_{\e}$, and taking into account that $Q_0$ is a harmonic map, we obtain
\begin{align}
  \mcG_{\e}[Q_\e] & = \frac{1}{2}  \int_{\Omega}
  \e^2 | \nabla P_{\e} |^2 + \int_{\Omega} \nabla Q_0:\nabla P_\eps + \frac{c^2}{4} \int_{\Omega} (\e^2 |
  P_{\e} |^2 + 2 Q_0 : P_{\e})^2 \\
  &= \frac{1}{2}  \int_{\Omega}
  \e^2 | \nabla P_{\e} |^2 + \int_{\Omega}\frac{c^2}{a^2} | \nabla Q_0 |^2 (Q_0: P_{\e}) + \frac{c^2}{4} \int_{\Omega} (\e^2 |
  P_{\e} |^2 + 2 Q_0 : P_{\e})^2,
  \label{eq:Genergy}
\end{align}
and, after some further computation,
%\noindent {\tmstrong{Step 2.}} {\tmstrong{Liminf inequality and additional compactness.}} We now show the $\Gamma$-convergence of the sequence $\mcG_\e$. We start by proving the lower bound.
%Let $(Q_\e)_{\e>0}$ be such that $Q_\e \to Q$ in $H^1(\Omega, \mcS_0)$. If $Q \neq Q_0$ we have $\mcG_\e[Q_{\e}] \to \infty$ and therefore we assume that $Q =Q_0$. We set $P_{\e} \assign (Q_\e - Q_0) /
%\e^2$, so that $Q_\e = Q_0 + \e^2 P_\e$ with
%$P_{\e} \in H_0^1 \left( \Omega, \mcS_0 \right)$. Plugging the expression of
%$Q_\e$ into the energy $ \mcG_{\e}$ we obtain
\begin{align}
  \mcG_{\e}[Q_\e] & =  \int_\Omega \left(\frac{c}{2a^2}|\nabla Q_0|^2+c(Q_0:P_\e)+\frac{c}{2}\e^2|P_\e|^2 \right)^2 - \frac{c^2}{4 a^4} \int_{\Omega} | \nabla Q_0 |^4\nonumber\\
  &\qquad\qquad+\frac{\eps^2}{2}\int_\Omega |\nabla P_\e|^2-\frac{c^2}{a^2}|\nabla Q_0|^2 |P_\eps|^2.
  \label{eq:Genergynew}
\end{align} 
Using the decomposition trick (cf.~Lemma A.1.~in \cite{Ignat2015stability}) we claim  that, for some $\alpha>0$, the following estimate holds:
\be \label{eq:hardy}
\int_\Omega |\nabla P_\e|^2-\frac{c^2}{a^2}|\nabla Q_0|^2 |P_\eps|^2 \geqslant \alpha \int_\Omega |\nabla P_\e|^2.
\ee
 Indeed, we know that $q_3=Q_0 : F_3$ solves $-\Delta q_3 = \frac{c^2}{a^2} |\nabla Q_0|^2 q_3$ and, by the maximum principle, $q_3<0$ in $\Omega$ because $\min_{\partial\Omega} q_3<0$.  Thus, we can represent any second-order  perturbation in the form $P_\e = q_3 U_\e$ with $U_\e:=q_3^{-1}P_\e$. Arguing as in the proof of Lemma \ref{lem: minimality of n_0}, we deduce the existence of a positive constant $\beta$ such that
\begin{align}\label{eq:Htrick}
\int_\Omega |\nabla P_\e|^2-\frac{c^2}{a^2}|\nabla Q_0|^2 |P_\eps|^2 & =
\int_\Omega |\nabla q_3 U_\e + q_3 \nabla U_\e|^2 + \Delta q_3 \, q_3 \, |U_\e|^2 \notag \\
& \hspace{1cm}=\int_\Omega |q_3|^2 |\nabla U_\e|^2 \;\geqslant\; \beta \int_\Omega \frac{c^2}{a^2} |\nabla Q_0|^2 |P_\e|^2.
\end{align}
This, for $\alpha := \frac{\beta}{1+\beta}$, immediately implies the desired result \eqref{eq:hardy}.

Since $\mcG_\e (Q_\e) \leqslant C$, by \eqref{eq:Genergynew} and \eqref{eq:hardy}, we obtain $ \| \e P_\e\|_{H^1} \leqslant C$ and  $\| P_\e : Q_0\|_{L^2} \leqslant C $.  Thereby, the existence of $P\in H_0^1(\Omega, \mcS_0)$, $\myrho\in L^2(\Omega)$ such that $Q_0: P_\e \rightharpoonup \myrho$ weakly in $ L^2(\Omega)$, and $\e P_\e \rightharpoonup P $ weakly in $H_0^1(\Omega,\mcS_0)$. Therefore, also $P:Q_0 =0$.
\smallskip

\noindent (\emph{ii}) The lower bound \eqref{eq:en0gdf1} follows from \eqref{eq:Genergynew} and the lower semicontinuity of the norms under weak convergence.
 Now, for any $P\in H_0^1(\Omega, \mcS_0)$ pointwise orthogonal to $Q_0$, and any $\myrho\in L^2(\Omega)$, we want to construct a recovery family $\{Q_{\e}\}_{\e>0} \subseteq \mcU$ such that \eqref{eq:compb01}, \eqref{eq:compb02}, \eqref{eq:compb03} hold, and  $\lim_{\e \rightarrow 0} \mcG_{\e} [Q_{\e}]=\mcH[P,\myrho]$. To this end we recall the construction for the case $b\neq 0$ and define 
$\xi_{\e} = \chi_\e \zeta_\e$ with $\zeta_\e \in C^\infty(\Omega)$, $\zeta_\e \to \myrho$ in $L^2$, $\|\nabla \zeta_\e\|^2_{L^2} \leqslant C \e^{-1}$  and $\chi_\e$ defined as in section \ref{buildcutoff}. For any $P \in H_0^1(\Omega, \mcS_0)$ such that $P:Q_0 =0$ we set, as a recovery family, 
\begin{equation}
  Q_{\e} \assign Q_0 + \e P + \frac{3}{2s_+^2}\e^2 \xi_{\e} Q_0.
\end{equation}
Plugging this expression into {\eqref{eq:Genergy}} we infer
\begin{align}
   \mcG_{\e} (Q_{\e})  &=  
  \frac{1}{2}  \int_{\Omega}
  | \nabla (P + \e \xi_{\e} Q_0) |^2 + \int_{\Omega}\frac{c^2}{a^2} | \nabla Q_0 |^2 \xi_\e + \frac{c^2}{4} \int_{\Omega} \left( |
  P + \textstyle{\frac{3\e}{2s_+^2}} \xi_\e Q_0 |^2 + 2 \xi_\e\right)^2.
  \end{align}
Finally, taking the limit as $\e \to 0$ we conclude.

\smallskip
\noindent (\emph{iii})
It is clear that if $Q \neq Q_0$ we can take $Q_\e =Q$ to recover $\mcG_\e [Q_\e] \to \infty$.
It is also clear that if $Q_\e$ is a family of minimisers of $\mcE_\e$ ,then $(i)$ holds. 
Minimising \eqref{eq:en0} with respect to $P$ and $\myrho$, we obtain $P=0$ and $\myrho =\frac{1}{2a^2} |\nabla Q_0|^2$. Moreover the minimal energy is
$$
\min \mcG_0 (\myrho) = -\frac{c^2}{4 a^4} \int_\Omega |\nabla Q_0|^4.
$$
In order to obtain  \eqref{eq:mincon0} and \eqref{eq:mincon1}, we combine \eqref{eq:Genergy} with the results stated in (\emph{ii}).
\end{proof}

\section{Applications to conformal director fields}\label{sec: conformal n}

Our previous results provide  refined information on minimisers of the Landau-de Gennes energy for any fixed planar boundary conditions of nonzero degree. %{\color{violet} In particular our results show how the large-scale, topological effects, induced by boundary frustrations manifest. There is an immediate leading-order  frustration generated at the boundary, captured (within the more constrained Oseen-Frank theory) through  the (non-zero) topological degree of the boundary data, that imposes a lower bound on the energy. Then, at the first order, there is a more subtle topological effect, the biaxiality. Quantitatively this is measured by the coefficients $c_1$ and $c_2$ in \eqref{eq: cj's1 }-\eqref{eq: cj's3 }, see Remark~\ref{rmk:1stbiax}. It is intuitively generated by the incapacity of the leading-order theory to optimally accommodate the frustrations imposed at the boundary. Intuitively one would expect that there might be degenerate, special situations,  in which the leading-order theory is capable of accommodating by itself the topological frustrations, thus pushing the biaxiality effects at higher order (biaxiality is virtually never eliminated completely, see the discussion in \cite{lamy2015uniaxial}). This motivates our study in this section, to understand the conditions under which one can optimize {\it at leading order} the topological constraints imposed by the boundary data. It turns out, indeed, that for topologically optimal boundary conditions the biaxiality is missing at first order (as it will be explained in Remark~\ref{rem: no biaxiality for conformal} below). }
In this section we apply Theorem~\ref{thm:gexp2} and Theorem~\ref{thm:gexp}  to two  families of planar boundary conditions  $n_b$ of independent interest. In particular, we consider a class of boundary data for which $Q_0$, the leading-order  Landau-de Gennes minimiser, is, up to a normalisation factor, an $\Sphere^4$-valued harmonic map.  % \marginnote{JR: Note that for both $b^2 \neq 0$ and $b^2 = 0$, for the special boundary conditions, $Q_0$ is harmonic (up to a constant).}  % Oseen-Frank director $n_0$ is not only a harmonic map but is also conformal.    
 %We apply Theorem~\ref{thm:gexp2} to a special family of planar boundary conditions for which $Q_0$ is harmonic (so that $n_0$ is conformal).  
 In both cases ($b=0$ and $b \neq 0$), $Q_0$ is related to a conformal (and therefore harmonic) $\Sphere^2$-valued map.  However, the relationship is different in the two cases. In the case $b^2 \neq 0$, $Q_0$ is given by $Q[n_0]$, where $n_0$ is a conformal director field.  In the case $b^2 = 0$, $Q_0$ is given up to normalisation by $c_{01} F_1 + c_{02} F_2 + c_{03} F_3$, where $c_0:\Omega \rightarrow \Sphere^2$ is conformal.
These conformal families are parameterised by the positions of interior {\it escape points}, where $n_0$ or $c$ is vertical, i.e., parallel to $e_3$.

The above class of boundary conditions is interesting for several reasons.  First, the leading-order Oseen-Frank energy saturates a topological lower bound, and is the same for all boundary conditions within the family. Therefore it is impossible to distinguish between minimal $Q$-tensor configurations generated by these boundary conditions using only the leading-order approximation.  The first-order correction breaks this degeneracy, and provides a mechanism to describe how the Landau-de Gennes energy depends on the position of escape points (defined by the boundary conditions) for $Q$-tensor fields that are harmonic at leading order.  Also, rather explicit results are available for both the leading- and next-order Landau-de Gennes minimiser in terms of the Green's function of the Laplacian on $\Omega$.  Interestingly, for these special boundary conditions, the biaxial component of the next-order correction vanishes; biaxiality appears only at order higher than $O(\eps^2)$.  %In particular, they indicate that given a domain $\Omega$ and a fixed boundary data $n_b$ the location of the escape points is uniquely defined by the boundary data and in the generic situation escape points would stay separated from each other.
%In both cases, $Q_0$ is related to a conformal (and therefore harmonic) $\Sphere^2$-valued map, but the relationship is different in the two cases. In the case $b^2 \neq 0$, $Q_0$ is given by $Q[n_0]$, where $n_0$ is a conformal director field.  In the case $b^2 = 0$, $Q_0$ is given up to a normalisation constant by $c_{01} F_1 + c_{02} F_2 + c_{03} F_3$, where $c_0:\Omega \rightarrow \Sphere^2$ is a conformal harmonic map.
Results for the case $b^2 \neq 0$ are stated in Section~\ref{subsec: conformal - results}, and  proofs are given in Section~\ref{subsec: conformal - proofs}.
Results for the case $b^2 = 0$ are stated in Section~\ref{subsec: b = 0}.

% proof of conformal implies harmonic}--\ref{sec:thm2.2}.  Analogous results for the case $b^2 = 0$ are stated in Section~\ref{subsec: b = 0}.

\subsection{Harmonic $Q$-tensors and conformal director fields -- main results}\label{subsec: conformal - results}
We begin by establishing a connection between harmonic uniaxial $Q$-tensors and conformal director fields.
\begin{definition} \label{def: conformality} 
 A director field $n \in H^1(\Omega, \Sphere^2)$ is {\it conformal}  if
\begin{equation}
\label{eq: conformal director}
\partial_2 n = \sigma n \times \partial_1 n,
\end{equation}
with $\sigma \equiv 1$ or $\sigma \equiv -1$ in $\Omega$.
 \end{definition} 

\noindent If $\Omega$ is equipped with the Euclidean metric and $\Sphere^2$ equipped with its standard Riemannian metric, then \eqref{eq: conformal director} is equivalent to the usual definition of conformal maps as isometries up to a scale factor; the sign $\sigma$ determines whether $n$ is orientation-preserving $(\sigma = 1)$ or reversing $(\sigma = -1)$.  
\begin{proposition}\label{prop: conformal implies harmonic}
If $n \in H^1(\Omega, \Sphere^2)$ is conformal, then $n$ is an $\Sphere^2$-valued harmonic map.
\end{proposition}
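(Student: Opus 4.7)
My plan is to show directly that the conformality relation \eqref{eq: conformal director} forces the harmonic map equation $-\Delta n = |\nabla n|^2 n$ to hold pointwise (and hence in the appropriate weak sense). The strategy is purely algebraic: differentiate the conformality identity once in each variable and use the pointwise constraint $|n|=1$ together with standard triple-product identities.

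First, I would record two consequences of $|n|=1$ that will be used repeatedly: $n\cdot\partial_i n=0$ for $i=1,2$ and, differentiating again, $n\cdot\partial_i^2 n=-|\partial_i n|^2$. Next, starting from $\partial_2 n=\sigma\, n\times\partial_1 n$, I would differentiate with respect to $x_1$. Since $\partial_1 n\times\partial_1 n=0$, this yields the compatibility identity
\begin{equation*}
\partial_1\partial_2 n=\sigma\,n\times\partial_1^2 n.
\end{equation*}
Differentiating the conformality relation instead with respect to $x_2$ and substituting the above, I would obtain
\begin{equation*}
\partial_2^2 n=\sigma(\sigma\, n\times\partial_1 n)\times\partial_1 n+n\times(n\times\partial_1^2 n).
\end{equation*}
The key simplification comes from applying the vector triple product identity $(a\times b)\times c=(a\cdot c)\,b-(b\cdot c)\,a$ to each term. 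Using $n\cdot\partial_1 n=0$ the first term reduces to $-|\partial_1 n|^2 n$, while using $|n|^2=1$ and $n\cdot\partial_1^2 n=-|\partial_1 n|^2$ the second reduces to $-|\partial_1 n|^2 n-\partial_1^2 n$. Altogether
\begin{equation*}
\Delta n=\partial_1^2 n+\partial_2^2 n=-2|\partial_1 n|^2\,n.
\end{equation*}

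Finally, I would compute $|\nabla n|^2$ from the conformality relation: since $|n\times\partial_1 n|^2=|n|^2|\partial_1 n|^2-(n\cdot\partial_1 n)^2=|\partial_1 n|^2$, one has $|\partial_2 n|^2=|\partial_1 n|^2$ and hence $|\nabla n|^2=2|\partial_1 n|^2$. Combining with the previous display gives $-\Delta n=|\nabla n|^2 n$, the equation for $\Sphere^2$-valued harmonic maps.

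The computation is elementary, and the only delicate point is regularity: a priori $n$ is only $H^1$, so the second derivatives above are to be read distributionally. I would handle this by first carrying out the argument assuming $n$ is smooth (the heart of the proof) and then noting that conformal $H^1$ maps into $\Sphere^2$ are automatically smooth in the interior, either by invoking standard elliptic regularity for weak harmonic maps or, more directly, by observing that the conformality relation together with $|n|=1$ implies that each component of $n$ satisfies a first-order elliptic system with $L^2$ coefficients from which $C^\infty$ regularity follows by bootstrapping. This regularity step is the only non-routine aspect; the rest is a short algebraic manipulation.
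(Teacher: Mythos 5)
Your algebraic computation in the smooth case is correct and arrives at the same identity as the paper's argument ($\Delta n=-2\sigma\,\partial_1 n\times\partial_2 n=-|\nabla n|^2 n$, using $\partial_1 n\times\partial_2 n=\tfrac{\sigma}{2}|\nabla n|^2 n$). The problem is the regularity step, which you yourself identify as "the only non-routine aspect" but then dispose of in a way that is circular. You propose to justify smoothness of $n$ "by invoking standard elliptic regularity for weak harmonic maps" --- but to apply H\'elein's regularity theorem you must first know that $n$ \emph{is} a weak harmonic map, and that is precisely the conclusion of the proposition. Your alternative suggestion, that conformality plus $|n|=1$ gives "a first-order elliptic system with $L^2$ coefficients from which $C^\infty$ regularity follows by bootstrapping," is not a proof: a first-order system whose coefficients are merely $L^2$ does not bootstrap, and the known regularity of weakly conformal $H^1$ maps into $\Sphere^2$ is itself obtained by first establishing weak harmonicity and then applying H\'elein (or equivalent Hardy-space/Wente machinery). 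So as written, the argument either assumes what it is trying to prove or leaves the essential difficulty unaddressed.

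The paper's proof shows how to close this gap: it establishes the harmonic map equation \emph{distributionally using only first derivatives of $n$}, so that the conformality relation \eqref{eq: conformal director} is never differentiated. Testing $\Delta n$ against $\phi\in\mycal{D}(\Omega,\RR^3)$, substituting $\partial_1 n=-\sigma\, n\times\partial_2 n$ and $\partial_2 n=\sigma\, n\times\partial_1 n$, and writing $\partial_i\phi\times n=\partial_i(\phi\times n)-\phi\times\partial_i n$, the troublesome term
$\int_\Omega \partial_1(\phi\times n)\cdot\partial_2 n-\partial_2(\phi\times n)\cdot\partial_1 n$
vanishes by the null-Lagrangian (Jacobian) structure, since $\phi\times n\in H^1_0(\Omega,\RR^3)$; every remaining integrand is a product of at most two first derivatives against a bounded function, hence in $L^1$ for $n\in H^1(\Omega,\Sphere^2)$. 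Only after weak harmonicity is obtained this way does one invoke H\'elein to conclude analyticity. To repair your proof you should replace the "differentiate twice, then fix regularity afterwards" scheme by this first-order weak computation; the pointwise identities you derived then serve as the formal motivation rather than as the proof.
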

The proof %(Section~\ref{subsec: proof of conformal implies harmonic}) 
involves showing that $n$ conformal implies that $n$ is a weakly harmonic map.  One then appeals to a %celebrated 
result of Hel\'ein \cite{Helein2d} that weakly harmonic maps over two-dimensional domains are real analytic.

A director field $n$ may be identified with a complex-valued function $w$ on $\Omega$
via stereographic projection between $\Sphere^2$ and the extended complex plane ${\mathbb C}^*$, as follows:
\begin{equation}
\label{eq: stereo}
w = \frac{n_1 + i n_2}{1+ n_3}, \qquad n = \frac{\left( 2\Re w,  2\Im w,   1-|w|^2\right)}{1+|w|^2}.
\end{equation}
Then $n$ being conformal is equivalent to $w$ being either meromorphic $(\sigma = 1)$ or antimeromorphic $(\sigma = -1)$.

We identify  $\Sphere^4$ with the space of $Q$-tensors of unit norm.
\begin{definition}
A $Q$-tensor field  $Q \in H^1(\Omega, \Sphere^4)$  is a (weakly) {\it $\Sphere^4$-valued harmonic map} if
\begin{equation}
\label{eq: harmonic Q-tensor }
 \Delta Q = -{|\nabla Q|^2} Q \text{ in } {\mycal D}'(\Omega,\mcS_0).
\end{equation}
\end{definition} 
As with director fields, if $Q$ is a weakly harmonic map, it is real analytic \cite{Helein2d}.

\begin{proposition}\label{prop: harmonic uniaxial Q-tensor}
Let  $n \in H^1(\Omega, \Sphere^2)$ and define $Q\in H^1(\Omega, \Sphere^4)$ by
\begin{equation}
\label{eq:  Q uni harm}
Q = \sqrt{3/2} \left(n \otimes n - \third I \right).
\end{equation}
Then $Q$ is an $\Sphere^4$-valued harmonic map if and only if $n$ is conformal.
\end{proposition}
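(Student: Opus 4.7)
My plan is to substitute $Q=\sqrt{3/2}\,(n\otimes n - \tfrac{1}{3}I)$ directly into the $\Sphere^4$-valued harmonic map equation \eqref{eq: harmonic Q-tensor } and separate the resulting identity into a ``harmonic-map part'' and a ``conformality part.'' A short computation using $|n|=1$, and hence $n\cdot\partial_i n = 0$, gives $|Q|^2\equiv 1$, $|\nabla Q|^2 = 3|\nabla n|^2$, and
\[
\Delta Q \;=\; \sqrt{3/2}\,\bigl[\Delta n \otimes n + n \otimes \Delta n + 2\,\nabla n \otimes \nabla n\bigr],
\]
with $\nabla n \otimes \nabla n := \sum_i \partial_i n \otimes \partial_i n$. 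The equation \eqref{eq: harmonic Q-tensor } then becomes the symmetric tensor identity
\[
\Delta n \otimes n + n \otimes \Delta n + 2\,\nabla n \otimes \nabla n + 3|\nabla n|^2\bigl(n\otimes n - \tfrac{1}{3}I\bigr) \;=\; 0.
\]

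Next I would contract this identity against $n$. Since $\partial_i n\cdot n = 0$ and $\Delta n\cdot n = -|\nabla n|^2$ (obtained from twice differentiating $|n|^2 = 1$), the contracted equation collapses to $\Delta n + |\nabla n|^2 n = 0$, so $n$ is automatically an $\Sphere^2$-valued harmonic map. Substituting $\Delta n = -|\nabla n|^2 n$ back into the full tensor identity leaves the residual constraint
\[
2\,\nabla n \otimes \nabla n \;=\; |\nabla n|^2\bigl(I - n\otimes n\bigr).
\]
Thus $Q$ solves \eqref{eq: harmonic Q-tensor } if and only if $n$ is a harmonic map \emph{and} the above auxiliary identity holds.

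It remains to identify the auxiliary identity with conformality. Working in a local orthonormal frame $\{n,p,q\}$ and writing $\partial_i n = a_i p + b_i q$, the identity reduces to the two scalar conditions $a_1^2 + a_2^2 = b_1^2 + b_2^2$ and $a_1 b_1 + a_2 b_2 = 0$; equivalently, $MM^T = cI$ for the $2\times 2$ matrix $M = \bigl(\begin{smallmatrix} a_1 & a_2 \\ b_1 & b_2\end{smallmatrix}\bigr)$. Since $M$ is square, this is equivalent to $M^T M = cI$, i.e.\ $|\partial_1 n| = |\partial_2 n|$ and $\partial_1 n\cdot\partial_2 n = 0$ -- the classical conformality conditions. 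Factoring $M = \sqrt{c}\,R$ with $R\in\mathrm{O}(2)$ yields $\partial_2 n = \sigma\, n\times\partial_1 n$ with $\sigma = \det R \in \{+1,-1\}$. The converse is immediate: if $n$ is conformal in the sense of Definition~\ref{def: conformality}, then $\{\partial_1 n,\partial_2 n\}$ is an orthogonal pair of equal length in the tangent plane $\{n\}^\perp$, so $\nabla n\otimes\nabla n = \tfrac{1}{2}|\nabla n|^2(I - n\otimes n)$, and combined with harmonicity of $n$ from Proposition~\ref{prop: conformal implies harmonic} the tensor identity above is recovered.

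The one delicate point I foresee is globality of $\sigma$: the auxiliary identity determines it only pointwise as $\mathrm{sgn}\,\det M$, while Definition~\ref{def: conformality} demands $\sigma\equiv +1$ or $\sigma\equiv -1$ throughout the connected domain $\Omega$. Here I would invoke real analyticity of two-dimensional harmonic maps (H\'elein), which makes $\det M$ real analytic; hence its sign is locally constant off a lower-dimensional analytic subvariety, and on $\Omega$ either one choice holds globally or $\nabla n$ vanishes identically (in which case the conformality relation is vacuous for either sign).
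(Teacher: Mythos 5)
Your proof is correct and follows essentially the same route as the paper's: substitute $Q$ into the harmonic-map equation, contract with $n$ (using $n\cdot\partial_i n=0$ and $\Delta n\cdot n=-|\nabla n|^2$) to extract $\Delta n=-|\nabla n|^2 n$, and identify the residual identity $2\,\nabla n\otimes\nabla n=|\nabla n|^2(I-n\otimes n)$ with the pointwise conditions $|\partial_1 n|=|\partial_2 n|$ and $\partial_1 n\cdot\partial_2 n=0$; the converse direction likewise matches the paper's computation. The one place you go beyond the paper is the global constancy of $\sigma$, which is a genuine subtlety the paper elides, but your phrase ``locally constant off a lower-dimensional analytic subvariety'' is not by itself conclusive, since a real-analytic function can change sign across its zero set; the clean finish is to observe that $\det M=n\cdot(\partial_1 n\times\partial_2 n)$ and $\tfrac{1}{2}|\nabla n|^2$ are both real analytic with equal squares, so by the integral-domain property of real-analytic functions on the connected domain $\Omega$ exactly one of $\det M=\pm\tfrac{1}{2}|\nabla n|^2$ holds identically, fixing $\sigma$ globally.
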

\noindent The proof is given in Section~\ref{subsec: proof of harmonic Q = conformal n}.  Below,
in a slight abuse of terminology we will say
%\marginnote{AZ:I put this in a definition to emphasize;I am still rather unhappy about this terminology.  \color{blue}JR:  I think the terminology is  standard  from the following point of view: By definition, a map between Riemannian manifolds is {\bf harmonic} if it is a critical point of the Dirichlet energy.  In this case, the target manifold is the set $|Q|^2 = 2s_+^2/3 \subset \mcS_0$, endowed with the metric induced by the flat metric on $\mcS_0$. AZ: ok then}

\begin{definition} A $Q$-tensor field $Q \in H^1(\Omega,\mcS_0)$ is {\it harmonic} if $|Q|$ is everywhere constant and $Q/|Q|$ is an $\Sphere^4$-valued harmonic map.
\end{definition}

Next, we use the connection between harmonic uniaxial $Q$-tensors and conformal director fields to determine the planar boundary conditions 
of given degree that minimise the leading-order Landau-de Gennes energy.
%We apply Theorem~\ref{thm:gexp2} to a special family of planar boundary conditions for which $Q_0$ is harmonic (so that $n_0$ is conformal).  The family is parameterised by the positions of interior {\it escape points}, where $n_0 = \pm e_3$. This case is interesting for several reasons.  First, the leading-order Oseen-Frank energy saturates a topological lower bound, and is the same for all boundary conditions within the family.  The first-order correction breaks this degeneracy, and describes how the Landau-de Gennes energy depends on the position of escape points for $Q$-tensor fields that are harmonic at leading order.  Also, rather explicit results are available for both the leading- and next-order Landau-de Gennes minimiser in terms of the Green's function of the Laplacian on $\Omega$.
% 
% To simplify our treatment, we  assume  below that $n_b$ is in $C^1(\partial\Omega, \Sphere^1)$, in which case its degree is given by
%\be \label{eq: degree formula} \frac{1}{2\pi} \oint_{\partial \Omega} (n_b \times n_b')\cdot e_3. \ee
Given $a \in \Omega$, let $g_a \in C^\infty(\Omega)$ denote the solution of the Laplace equation 
\begin{equation}
\label{eq: g }
\Delta g_a = 0, \quad g_a|_{\partial \Omega}(x) = \log|x - a|.
\end{equation}
Thus, $\log|x - a| - g_a$ is the Green's function for the Laplacian on $\Omega$ with Dirichlet boundary conditions. % Since $\Omega$ is simply connected, $g_a$ has a harmonic conjugate, which is determined up to an additive constant. Let $h_a$ denote a harmonic conjugate of $g_a$.    

In what follows, it will be convenient to regard $\Omega$ as a subset of $\CC$  rather than $\RR^2$; expressions such as $1/(x-a)$ for $x, a \in \Omega$ should be understood in this context.  Since $\Omega$ is simply connected, $g_a$ has a harmonic conjugate, which is determined up to an additive constant. Let $h_a$ denote a harmonic conjugate of $g_a$.    Then $g_a + i h_a$ is holomorphic on $\Omega$. 
Let  $m \in \ZZ$ and $\bm{a} = (a_{(1)}, \ldots, a_{(|m|)}) \in \Omega^{|m|}$ denote an $|m|$-tuple of points in $\Omega$, not necessarily distinct.  We define % denote by $w_{0; \bm{a}}$ an element of the form  
\begin{equation}\label{eq: w}
w_{0; \bm{a}}  :=   e^{i\alpha} \left[ \prod_{j=1}^{|m|} \frac{x-a_{(j)}}{ \exp\bigl( g_{a_{(j)}}+   ih_{a_{(j)}}\bigr)} \right]^{\sgn m}\, ,
\end{equation}
for some $\alpha\in\RR$.

%
%Given $a \in \Omega$, let $g_a \in C^\infty(\Omega)$ denote the solution of the Laplace equation 
%\begin{equation}
%\label{eq: g }
%\Delta g_a = 0, \quad g_a|_{\partial \Omega}(x) = \log|x - a|.
%\end{equation}
%
%
%
%
%Let $\bm{a} = (a_{(1)}, \ldots, a_{(|m|)}) \in \Omega^{|m|}$ denote an $|m|$-tuple of points in $\Omega$ 

%Also, let $$\zeta: 
%\RR^2 \rightarrow \C; \ x \mapsto \zeta(x) = x_1 + i x_2$$ denote the usual identification of the real and complex planes, and $\overline{\zeta}(x) = x_1 - i x_2$.

%{\color{blue} 
%\begin{definition} A conformal director field $n$ is {\bf sign-definite} if $n\cdot e_3 >  0$ or $n\cdot e_3 < 0$ in $\Omega$.\end{definition}}

 \begin{theorem}\label{thm:locdef}  Let $n_b \in C^1(\partial\Omega, \Sphere^1)$ be a planar boundary director field of degree  $m \neq 0$, and let $Q_b = s_+(n_b \otimes n_b - \third I)$. The following assertions hold:
 \begin{enumerate}[leftmargin=0.8cm,label=\textup{(}\roman*\textup{)},widest = i]
  \item For $Q\in H^1(\Omega, \mcS_0)$ with $Q|_{\partial \Omega} = Q_b$, we have that  
       \begin{equation}\label{eq:lower_bound+}
  \mcE_0 [Q] \geqslant 2 s_+^2 \pi |m|,
    \end{equation}
with equality if, and only if, $Q = s_+(n_0 \otimes n_0 - \third I)$ with $n_0$  conformal and $n_0\cdot e_3$ sign-definite $($i.e., $n_0\cdot e_3$ is either strictly positive or strictly negative$)$.
\medskip
\item  
%where $x
 The director field $n_0$ is conformal with  $n_0\cdot e_3$ sign-definite if, and only if, its stereographic projection {\textup{(\ref{eq: stereo})}} 
 is given by $w_{0; \bm{a}}$ or by $1/{\overline w}_{0; \bm{a}}$ for some $\bm{a}  \in \Omega^{|m|}$ $($the two alternatives for $n_0$ are related by reflection in $e_3$ $)$.
%\label{eq: w_pm }
%w^+_{0; \bm{a}}(x) =   e^{i\alpha} \prod_{j=1}^{|m|} \frac{\zeta(x-a_{(j)})}{ \exp\left( g_{a_{(j)}}(x) + i  h_{a_{(j)}}(x)\right)} , \quad \text{or}\quad w^-_{0; \bm{a}}(x)=  e^{i\alpha}
% \prod_{j=1}^{|m|} \frac {\exp\left( g_{a_{(j)}}(x) - i  h_{a_{(j)}}(x)\right)}{\overline{\zeta}(x-a_{(j)})},   
% \end{equation} 
% \begin{align}
%\label{eq: w_pm }
%w^+_{0; \bm{a}}(x) & =   e^{i\alpha} \prod_{j=1}^{|m|} \frac{\zeta(x-a_{(j)})}{ \exp\left( g_{a_{(j)}}(x) + i  h_{a_{(j)}}(x)\right)} , \\
%\intertext{or}
% w^-_{0; \bm{a}}(x) &=  e^{i\alpha}
% \prod_{j=1}^{|m|} \frac {\exp\left( g_{a_{(j)}}(x) - i  h_{a_{(j)}}(x)\right)}{\overline{\zeta}(x-a_{(j)})},   \label{eq: w_pm2 }
%\end{align} 
%where $\alpha \in \RR$ is constant.
The planar boundary conditions satisfied by  $n_0$ are given by
\begin{equation}
\label{eq: n_b conformal }
n_{b;\bm{a}} =  \cos \phi_{\bm{a}} \, e_1 + \sin \phi_{\bm{a}} \, e_2, \ \text{ where }\ \phi_{\bm{a}} = \arg w_{0; \bm{a}}.
% \alpha+ \sgn m \sum_{j=1}^{|m|} \left(\arg (x-a_{(j)})  - h_{a_{(j)}}(x)\right).
\end{equation}
The points $\bm{a}$ are precisely the escape points where $n_0 = e_3$ \textup{(}if $n$ has stereographic projection $w_{0; \bm{a}}$\textup{)} or $n_0 = -e_3$ \textup{(}if $n$ has stereographic projection $1/{\overline w}_{0; \bm{a}}$\textup{)}.

  \end{enumerate}
\end{theorem}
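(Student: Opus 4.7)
For part (i), my first step is to use simple connectedness of $\Omega$ to lift any admissible $Q \in H^1(\Omega, \mcS_*)$ to a map $n \in H^1(\Omega, \Sphere^2)$ with $Q = s_+(n\otimes n - \third I)$ and $n|_{\partial\Omega} = \pm n_b$, so that $\mcE_0[Q] = s_+^2 \int_\Omega |\nabla n|^2$. From $|n|\equiv 1$ (hence $n\cdot\partial_i n \equiv 0$) and the identity $|n\times v|^2 = |v|^2 - (n\cdot v)^2$, one derives the pointwise Bogomolny-type decomposition
\begin{equation*}
|\nabla n|^2 \;=\; |\partial_2 n - \sigma\, n \times \partial_1 n|^2 + 2\sigma\, n\cdot(\partial_1 n \times \partial_2 n), \qquad \sigma \in \{\pm 1\},
\end{equation*}
so that, after integrating and choosing $\sigma$ to match the sign of the resulting topological integral, $\int_\Omega |\nabla n|^2 \geqslant 2\bigl|\int_\Omega n\cdot(\partial_1 n \times \partial_2 n)\bigr|$. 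The integrand is the pullback $n^{*}\omega_{\Sphere^2}$ of the area form on $\Sphere^2$. Writing $\omega_{\Sphere^2} = d(-\cos\alpha\,d\beta)$ in spherical coordinates and applying Stokes after excising small disks around the escape points $n^{-1}(\pm e_3)$, the boundary integral over $\partial\Omega$ vanishes since $n_3 \equiv 0$ there, while each preimage of $\pm e_3$ contributes $\pm 2\pi$ times the local winding. Denoting by $k_\pm \in \ZZ_{\geqslant 0}$ the multiplicities of preimages of $\pm e_3$, a degree argument tied to the winding-$m$ boundary gives $k_+ - k_- = m$ and $\int_\Omega n^{*}\omega_{\Sphere^2} = 2\pi(k_+ + k_-) \geqslant 2\pi|m|$. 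The lower bound \eqref{eq:lower_bound+} follows, and equality forces simultaneously (a) the squared term in the decomposition to vanish pointwise, i.e.\ $n$ is conformal in the sense of Definition~\ref{def: conformality}, and (b) $\min(k_+, k_-) = 0$, i.e.\ $n\cdot e_3$ is sign-definite.

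For part (ii), I would pass through the stereographic projection \eqref{eq: stereo}: a conformal director $n$ with $\sigma = +1$ corresponds to $w$ meromorphic on $\Omega$, while $\sigma = -1$ corresponds to $\overline w$ meromorphic. Sign-definiteness $n\cdot e_3 > 0$ in $\Omega$ translates to $|w| < 1$ in $\Omega$, and $n_3 = 0$ on $\partial\Omega$ to $|w| = 1$ there; together these force $w$ to be holomorphic on $\Omega$, unimodular on $\partial\Omega$, with zeros precisely at the escape points $\bm a \in \Omega^{|m|}$ where $n = e_3$, of total multiplicity $|m|$ by the argument principle applied to $w|_{\partial\Omega}$. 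The explicit formula \eqref{eq: w} is then verified factor by factor: each $(z - a_{(j)})/\exp\bigl(g_{a_{(j)}} + ih_{a_{(j)}}\bigr)$ is holomorphic in $\Omega$, vanishes to first order at $a_{(j)}$, and has modulus $|z - a_{(j)}|/e^{g_{a_{(j)}}(z)} \equiv 1$ on $\partial\Omega$ by \eqref{eq: g }. Uniqueness up to the unimodular factor $e^{i\alpha}$ follows from the maximum-modulus principle applied to the quotient of two such candidates. The case $n\cdot e_3 < 0$ reduces to the above via the reflection $n_3 \mapsto -n_3$, which under stereographic projection corresponds to the substitution $w \mapsto 1/\overline w$ and accounts for the alternative parameterisation by $1/\overline{w}_{0;\bm a}$.

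The main obstacle, I expect, is making the topological step in part (i) rigorous for $H^1$ (rather than smooth) maps: one must show that $\int_\Omega n^{*}\omega_{\Sphere^2}$ is well-defined and constrained to the discrete set $\{2\pi(|m| + 2k) : k \in \ZZ_{\geqslant 0}\}$, with the minimum $2\pi|m|$ attained precisely in the sign-definite conformal case. I would handle this either by density of smooth maps with isolated escape singularities in the admissible $H^1$-class (in the spirit of \cite{Bethuel_1993}) or by a direct degree-theoretic argument based on $H^1$-approximation in two dimensions.
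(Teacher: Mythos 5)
Your part (ii) and the analytic half of part (i) follow essentially the paper's route: the chain $\int_\Omega|\nabla n|^2\geqslant 2\bigl|\int_\Omega n\cdot(\partial_1 n\times\partial_2 n)\bigr|$ with equality iff $n$ is conformal with a globally constant $\sigma$, Stokes applied to a primitive of the area form, and, for (ii), stereographic projection to a holomorphic $w$, the argument principle to locate $|m|$ zeros, and the maximum principle applied to the quotient by the explicit product \eqref{eq: w}. The genuine gap is in the topological step of part (i), where you attempt to prove the lower bound for an \emph{arbitrary} competitor by counting escape points. With your primitive $-\cos\alpha\,d\beta$, the Stokes computation gives $\int_\Omega n^{*}\omega_{\Sphere^2}=2\pi(I_+-I_-)$, where $I_\pm$ are the \emph{signed} total indices of the planar projection $(n_1,n_2)$ at the preimages of $\pm e_3$, while the Poincar\'e--Hopf count for the zero set of $(n_1,n_2)$ gives $I_++I_-=m$. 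You have written these two relations the other way around, and you have additionally assumed the local multiplicities are nonnegative, which is false for a general (even smooth) competitor: preimages need not be isolated and local windings can have either sign. With the correct relations the inequality $\bigl|\int_\Omega n^{*}\omega_{\Sphere^2}\bigr|\geqslant 2\pi|m|$ simply fails --- in the paper's notation (the remark preceding \eqref{eq: mixed w }), a competitor in the homotopy class $(r,s)$ has $\int_\Omega n^{*}\omega_{\Sphere^2}=2\pi(r+s)$ and $m=r-s$, and e.g.\ the class $(1,-1)$ has $m=2$ but vanishing signed area, so the Bogomolny bound alone gives nothing.

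The paper avoids this entirely: since one only needs the bound for the infimum, it reduces without loss of generality to a \emph{global minimiser}, invokes Remark~\ref{rem: unique minimiser Q_0} (the Sandier/INSZ structure result) to obtain that $n\cdot e_3$ is sign-definite and $n$ is smooth up to the boundary, and only then runs the Stokes argument with the primitive $(1-\cos\theta)(\partial_2\varphi\, e_1-\partial_1\varphi\, e_2)$, which is smooth precisely because $\theta=\pi$ is excluded; the oriented area is then exactly $2\pi m$, with no interior excisions needed, and sign-definiteness is an input rather than an output of the equality analysis. If you want to keep your more ambitious route through general competitors, the correct repair is to keep the absolute value \emph{inside} the integral, $\int_\Omega|\nabla n|^2\geqslant 2\int_\Omega|\partial_1 n\times\partial_2 n|$, and use the coarea/covering formula to get $2\int_\Omega|\partial_1 n\times\partial_2 n|\geqslant 4\pi(|r|+|s|)\geqslant 4\pi|r-s|$; this is exactly the generalisation the paper sketches after the theorem. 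Your closing paragraph correctly identifies the $H^1$-regularity issue and the Schoen--Uhlenbeck density fix, which matches the paper's remark following the proof.
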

Thus, amongst degree-$m$ planar boundary conditions, the leading-order Landau-de Gennes energy achieves its minimum, namely $2\pi |m| s_+^2$, for the $2|m|$-dimensional family  $n_{b;\bm{a}}$, and is independent of the positions $\bm{a}$ of the escape points.   
The proof of Theorem~\ref{thm:locdef} is given in Section~\ref{subsec: conformal - proofs}.

Given $\bm{a}\in \Omega^{|m|}$,  let $Q^*_{\eps;\bm{a}}$ % \in H^1(\Omega,\mcS_0)$ 
denote a minimiser of the Landau-de Gennes energy  subject to boundary conditions \eqref{eq: orientable bcs} with boundary director $n_{b; \bm a}$ given by \eqref{eq: n_b conformal }.    From Proposition~\ref{prop:zero} and Theorem~\ref{thm:locdef},  we have that $Q^*_{\eps;\bm{a}} \rightarrow  Q[n_{0;\bm{a}}]$ as $\eps \rightarrow 0$. %, where $Q_{0; \bm{a}} = s_+ Q[n_{0;\bm{a}}]$. % and $n^{\pm}_{0; \bm{a}}$ is given by \eqref{eq: stereo} and \eqref{eq: w_pm }.  
From Theorems~\ref{thm:gexp2} and \ref{thm:locdef}, we have that
\begin{equation}
 \label{eq: E_0 and W_LdG}
\frac{1}{s_+^2} \mcE_\eps[Q_{\eps; \bm{a}}] = 2\pi |m| + \eps^2 W_{LdG}(\bm{a}) + o(\eps^2), \ \text{ where }W_{LdG}(\bm{a})  %-\frac{3}{\nu}\int_\Omega \left|\nabla n_{0; \bm{a}}\right|^4 
= -\frac{3}{\nu} \|\nabla n_{0; \bm{a}}\|_{L^4}^4. 
\end{equation}
The above energy expression provides a tool to distinguish between various conformal configurations using locations of escape points.  Let us  examine  how the first-order energy, $W_{LdG}(\bm{a})$, depends on $\bm{a}$.  %From 
%\eqref{eq: E_0 and W_LdG}, $W_{LdG}(\bm{a})$ is given by  $-(3/\nu) \|\nabla n_{0; \bm{a}}\|_{L^4}^4$.  
Since the  $L^2$-norm of $\nabla n_{0; \bm{a}}$ is fixed (its square is equal to  $2\pi|m|$), it follows that $W_{LdG}(\bm{a})$ decreases as $\nabla n_{0; \bm{a}}$ becomes more concentrated.  Concentration occurs as the escape points move towards the boundary, since $n_{0; \bm{a}} = \pm e_3$ at escape points while $n_{0; \bm{a}}\cdot e_3 = 0$ at the boundary.

One can show that as the  distance $\delta := \min_j \text{dist}\,(a_{(j)},\partial \Omega)$ goes to zero, $W_{LdG}(\bm{a})$ diverges as $\delta^{-2}$.  This is compatible with Theorem~\ref{thm:gexp2}, which concerns the behaviour of the energy as $\eps\rightarrow 0$ for fixed boundary conditions.  To analyse the energy for $ \eps, \delta \rightarrow 0$ simultaneously, one would need to go to higher order in the $\Gamma$-expansion and include a boundary-layer analysis. %\marginnote{JR: maybe we don't want to say all this} 

In the case of the two-disk $\Omega = D^2$, $g_a$ and $h_a$ are given by
\begin{equation}
\label{eq: g_a and h_a for disk }
g_a(x) + ih_a(x) = \frac{1}{1 -ax}.
\end{equation}
In this case, if ${\bm{a} = 0}$, i.e., if the escape points coincide at the origin, then the conformal boundary condition $n_{b}$  is {\it $m$-radial} \cite{Ignat2015binstability, kitavtsev2016liquid}, and %XXXXX 
\begin{equation}
\label{eq: n_b radial }
n_{b} =  \cos (m\varphi + \alpha) \, e_1 + \sin (m\varphi + \alpha) \, e_2,
\end{equation}
where $\varphi$ is the polar angle coordinate on $\RR^2$ and $\alpha$ is a constant.

\begin{remark}\label{rem: no biaxiality for conformal}
%\marginnote{JR: just to note for internal discussion, this is true for all conformal $n$, not just ones for which $n\cdot e_3$ is sign-definite.}
Let $Q^*_\eps$ denote a Landau-de Gennes minimiser with conformal leading-order Oseen-Frank director $n_0$. It follows from
%In view of 
\eqref{eq:  P_0^N expand}-\eqref{eq: cj's3 } and Definition~\ref{def: conformality}
%, it follows that if $n_0$ is conformal, then 
that 
$Q^*_\eps - Q[n^*_\eps]$ is proportional to $Q[n^*_\eps]$ to leading order; that is,  the induced biaxiality in $Q^*_\eps$ does not appear at $O(\eps^2)$ but at higher order.
\end{remark}

\begin{figure}[tb]
    \centering
    \captionsetup{width=1\linewidth}
    \begin{subfigure}[b]{0.48\textwidth}
        \includegraphics[width=\textwidth]{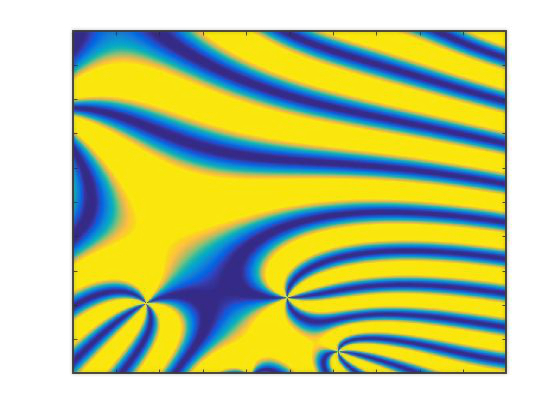}
        \caption{Conformal director -- global minimiser}
       \label{fig:S1}
    \end{subfigure}
    ~ %add desired spacing between images, e. g. ~, \quad, \qquad, \hfill etc. 
      %(or a blank line to force the subfigure onto a new line)
    \begin{subfigure}[b]{0.48\textwidth}
        \includegraphics[width=\textwidth]{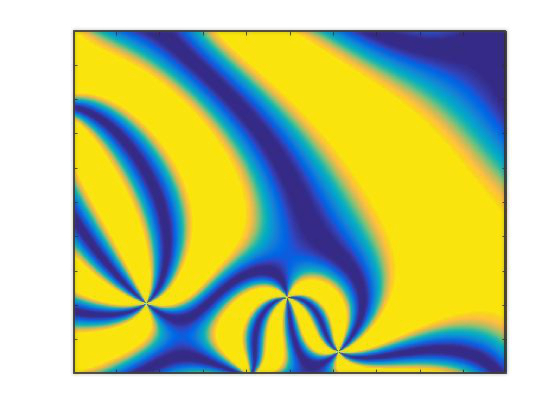}
        \caption{Conformal director -- local minimiser}
        \label{fig:S2}
    \end{subfigure}
     ~ %add 
    \caption{  \small Schlieren textures in  conformal director fields. %  corresponding to \eqref{eq: w_pm } and \eqref{eq: mixed w }. 
    The colour scale corresponds to the quantity $[n_1 n_2 /(n_1^2 + n_2^2)]^2$, where $n_j := n\cdot e_j$;  this quantity is proportional to the intensity of light passing through a nematic film with director $n(x,y)$ placed between  polarisers with polarisation axes $e_1$ and $e_2$.  Random conformal director fields were constructed from (a) Eq.~\eqref{eq: w} and (b) Eq.~\eqref{eq: mixed w } by setting $g = h = 0$, corresponding to an infinite planar domain, with  escape points chosen at random in a large region of the plane, one portion of which is shown in the figures. In (a), the director is equal to $+e_3$ at all escape points, %(cf. Eq.~\eqref{eq: w}), 
 while in (b), the director is randomly taken to be $+e_3$ or $-e_3$  at escape points.
    Note that  in (b), contours (lines of constant hue) can join pairs of escape points, but not in (a).  This can be understood in terms of the analytic representations \eqref{eq: w} and \eqref{eq: mixed w }.  Contour lines correspond to lines on which $\arg w$ is fixed, which are also lines of steepest descent of $|w|$.   Escape points with $n = e_3$ or $n = -e_3$ correspond respectively to zeros or poles of $w$.  A zero and a pole of $w$ can be joined by a line of steepest descent, but two zeros  of $w$ cannot, nor can two poles.} \label{fig:Schlieren}
\end{figure}

Let us indicate a generalisation of Theorem~\ref{thm:locdef}.  The space of director fields $n \in H^1(\Omega, \Sphere^2)$ satisfying planar boundary conditions can be partitioned into homotopy classes $(r,s)$ labeled by a pair of integers.  For $n$ differentiable, $r$ and $s$ correspond respectively to a signed count of the preimages of regular values of $n$ in the northern and southern hemispheres of $\Sphere^2$, with the sign given by the sign of the determinant of the Jacobian at the preimage.   The director field with stereographic projection $w_{\bm{0;a}}$ belongs to the class $(m,0)$ for $m >0$ and to $(0,-m)$ for $m<0$.  Its reflection in $e_3$, which has  stereographic projection $1/{\overline w}_{\bm{0;a}}$, belongs to the class $(-m,0)$ for $m >0$ and to $(0,m)$ for $m<0$.  For a general class $(r,s)$, the degree of the planar boundary conditions is given by $m = r-s$.
%
% under $n$ may be further partitioned into two families of homotopy classes labelled by a pair of integers $(m,q)$ (the two families are related by relfection in $e+3$).  By convention, the director $n_{0;\bm{a}}$ corresponding to $w_{0;\bm{a}}$ of \eqref{eq: w_pm } is taken to belong to the class $(m,0)$.  With this convention, for  general $n$ with degree-$m$ planar boundary conditions, $q$ is  given by the degree of the map $\phi_n: \Sphere^2 \rightarrow \Sphere^2$ constructed by identifying the northern and southern hemispheres of the domain $\Sphere^2$ with two copies of $\Omega$ glued at their boundaries, and taking $\phi_n = n$ in the northern hemisphere and $\phi_n = n^+_{0;\bm{a}}$ in the southern hemisphere.  
% 
It is straightforward to show (for $C^1$-boundary conditions) that for $n$ in the class $(r,s)$, the one-constant Oseen-Frank energy $\mcE_{OF}(n)$ is bounded below by $2\pi s_+^2( |r| + |s|)$ - this generalises the first assertion in Theorem~ \ref{thm:locdef}.

The second assertion may be generalised as follows:
For $r$ and $s$ non-negative, conformal directors in the homotopy class $(r,s)$ that saturate the lower bound are given by
\begin{equation}
\label{eq: mixed w }
 w_{0;\bm{b, c}}(x) =  e^{i\alpha}\prod_{j=1}^{|r|} \frac{x-b_{(j)}}{ \exp\bigl( g_{b_{(j)}} + i  h_{b_{(j)}}\bigr)} 
 \prod_{k=1}^{|s|} \frac {\exp\bigl( g_{c_{(k)}} + i  h_{c_{(k)}}\bigr)}{x-c_{(k)}} \, ,
 \end{equation} 
where $\bm{b}$ and $\bm{c}$ are respectively $|r|$- and $|s|$-tuples of points in $\Omega$.  The $b_{(j)}$'s are the points where $n_0 = e_3$, and the 
$c_{(k)}$'s are the points where $n_0 = -e_3$.  For $r$ (resp.~$s$) negative, the first (resp.~second) product in \eqref{eq: mixed w } is replaced by its complex conjugate.  These are local minimizers of the Dirichlet energy with respect to their boundary conditions (they are global minimisers for $r=0$ or $s=0$).
%
%For $m < 0$, the corresponding conformal directors are obtained from \eqref{eq: mixed w } by the involution $w \mapsto 1/\overline{w}$.
%  In the related theory of two-dimensional magnetic materials, the integer $q$ corresponds to the skyrmion number \cite{bogdanov1989}.  % [REFERENCE]. %%%REF
Director fields corresponding to  \eqref{eq: w} and \eqref{eq: mixed w } are shown in Figure~\ref{fig:Schlieren}.
\subsection{Applications to conformal director fields: proofs}\label{subsec: conformal - proofs}
%\subsubsection{Proof of Proposition~\ref{prop: conformal implies harmonic}} \label{subsec: proof of conformal implies harmonic}
\begin{proof}[Proof of Proposition ~\ref{prop: conformal implies harmonic}] Note that $n$ conformal implies that $\partial_2 n = \sigma n \times \partial_1 n$ and $\partial_1 n = - \sigma n \times \partial_2 n$. Therefore, for $\phi \in {\mycal D}(\Omega, \RR^3)$, we have that
\begin{align} \label{eq: 411}
\int_\Omega \Delta n \cdot \phi  &
%= -\int_\Omega \partial_1 n \cdot \partial_1 \phi + \partial_2 n \cdot \partial_2 \phi \nonumber 
%&= \sigma \int_\Omega (n \times \partial_2 n) \cdot \partial_1 \phi - (n \times \partial_1 n) \cdot \partial_2 \phi \nonumber \\
%
= \sigma \int_\Omega (\partial_1 \phi \times  n) \cdot \partial_2 n - (\partial_2 \phi \times  n) \cdot \partial_1 n \nonumber \\
&= \sigma \int_\Omega \left( \partial_1( \phi \times  n)  - \phi\times \partial_1 n\right) \cdot \partial_2 n - \left( \partial_2( \phi \times  n)  - \phi\times \partial_2 n\right) \cdot \partial_1 n. 
\end{align}
We note that $m := \phi\times n \in H^1_0(\Omega, \RR^3)$, so that
\be \label{eq: 412}\int_\Omega \partial_1 m  \cdot \partial_2 n -   \partial_2 m  \cdot \partial_1 n = 0. \ee
From \eqref{eq: 411} and  \eqref{eq: 412},
\be \int_\Omega \Delta n \cdot \phi = 2 \sigma \int_\Omega (\phi\times \partial_2 n) \cdot \partial_1 n = -2\sigma\int_\Omega (\partial_1 n \times \partial_2 n)\cdot \phi.\ee
The fact that $n$  is conformal implies that $\partial_1 n \times \partial_2 n = \half \sigma |\nabla n|^2 n$, 
from which it follows that $n$ is a weakly harmonic map, i.e., $\Delta n = -|\nabla n|^2 n$.  From the regularity result of H\'elein \cite{Helein2d}, it follows that $n$ is real analytic.
\end{proof}

%\subsubsection{Proof of Proposition~$\ref{prop: harmonic uniaxial Q-tensor}$}
\label{subsec: proof of harmonic Q = conformal n}
\begin{proof}[Proof of Proposition ~\ref{prop: harmonic uniaxial Q-tensor}]
First,  suppose that $n\in H^1(\Omega, \Sphere^2)$ is conformal.  From Proposition~\ref{prop: conformal implies harmonic}, we have  that $n$ is a real analytic $\Sphere^2$-valued harmonic map.  
Let \be \label{eq: Prop 2.2 Q def} Q = \sqrt{3/2} \left(n \otimes n - \third I \right) \in C^\infty(\Omega,\Sphere^4).\ee   
Using the harmonic map equation for $n$, we have that 
\begin{align}
\Delta Q & = \sqrt{3/2} \left(\Delta n \otimes n + 2 \grad n \otimes \grad n + n \otimes \Delta n\right) \\
& = 
-\sqrt{6} \left (|\nabla n|^2 n \otimes n  - \grad n \otimes \grad n \right). \label{eq: pf P2.1 Delta Q}
\end{align}
Also, $n$ conformal implies that  $ \partial_1 n\cdot \partial_2 n = 0$ and $|\partial_1 n| = |\partial_2 n| $.  Therefore,  if $\lambda :=  |\nabla n|/\sqrt2 \neq 0$, then the three unit-vectors 
$\lambda^{-1} \partial_1 n$,   $ \lambda^{-1} \partial_2 n$ and $n$ constitute an orthonormal frame.  It follows that
\begin{equation}
\label{eq: P2.1 n_, projection }
\grad n \otimes \grad n = \half |\nabla n|^2 (I - n\otimes n).
\end{equation}
Substituting  \eqref{eq: P2.1 n_, projection } into \eqref{eq: pf P2.1 Delta Q}, we get that
\begin{equation}
\label{eq: pf P2.1 Delta Q 2}
\Delta Q =  - 3\sqrt{3/2}  |\nabla n|^2  \left(n\otimes n - \third I\right) = -3|\nabla n|^2 Q = -|\nabla Q|^2 Q,  % = -\frac{|\nabla Q|^2}{|Q|^2} Q, 
\end{equation}
as $|\nabla Q|^2 = 3 |\nabla n|^2$.   Thus, $Q$ is an $\Sphere^4$-valued harmonic map.

Next, let $Q \in H^1(\Omega, \Sphere^4)$ be given by \eqref{eq: Prop 2.2 Q def} with $n \in H^1(\Omega, \Sphere^2)$, and suppose $Q$ is
an $\Sphere^4$-valued harmonic map.  Then $Q$ is real analytic \cite{Helein2d}, which implies that $n$ is real analytic.   From the harmonic map equation for $Q$, we get that 
\begin{equation}
\label{eq: P.21 Q->n 1 }
\Delta n \otimes n + 2 \grad n \otimes \grad n + n \otimes \Delta n = -3|\nabla n|^2 \left( n\otimes n - \third I\right).
\end{equation}
Applying both sides of the preceding equation to $n$ and using the identities  $\partial_i n\cdot n = 0$,  $i = 1,2$ and $ \Delta n \cdot n = -|\nabla n|^2$, which follow from $|n| = 1$, we get that $n$ is a harmonic map, i.e.,
$\Delta n = -|\nabla n|^2 n$.  Substitution of this relation into \eqref{eq: P.21 Q->n 1 } yields
\begin{equation}
\label{eq: P.21 Q->n 2 }
 2 \grad n \otimes \grad n = |\nabla n|^2 (I - n\otimes n).
\end{equation}
Applying both sides of the preceding equation to $\partial_1 n$ and $\partial_2 n$ yields the pair of vector equations
%\begin{align}
%\label{eq: P.21 end}
%    (|\partial_1 n|^2 - \half |\nabla n|^2) \partial_1 n + (\partial_1 n\cdot \partial_2 n) \partial_2 n &= 0,\nonumber   \\
%    (\partial_1 n\cdot \partial_2 n) \partial_1 n + (|\partial_2 n|^2 - \half |\nabla n|^2) \partial_2 n  &= 0.
%\end{align}
\begin{equation}
\label{eq: P.21 end }
\alpha \partial_1 n + \beta \partial_2 n = \beta \partial_1 n + \gamma \partial_2 n = 0, \end{equation}
where $\alpha = |\partial_1 n|^2 - \half |\nabla n|^2$, $\beta = \partial_1 n\cdot \partial_2 n$, and $\gamma = |\partial_2 n|^2 - \half |\nabla n|^2$.
The solvability conditions are %$|\partial_1 n|^2  = |\partial_2 n|^2 = \half |\nabla n|^2$ and $(\partial_1 n\cdot \partial_2 n) = 0$, 
$\alpha = \beta = \gamma = 0$, which are equivalent to the condition \eqref{eq: conformal director} for $n$ to be conformal.
\end{proof}

%\subsubsection{Conformal boundary conditions: proof of Theorem~$\ref{thm:locdef}$}\label{sec:thm2.2}
\begin{proof}[Proof of Theorem~\ref{thm:locdef}]  (\emph{i}) Without loss of generality we may assume that   $Q \in H^1(\Omega, \mcS_*)$, since otherwise $\mcE_0(Q) =+ \infty$.  Since $\Omega$ is simply connected, it follows that $Q = s_+(n\otimes n - \third I)$ for some $n \in H^1(\Omega, \Sphere^2)$.  Since we are seeking to establish a lower bound for the energy, we can assume without loss of generality that $Q$ is global minimiser of $\mcE_0$.  From Remark~\ref{rem: unique minimiser Q_0},  it follows that $n$ is a minimising $\Sphere^2$-valued harmonic map, and without loss of generality we may assume that $n\cdot e_3 > 0$. The classical regularity result of H\'elein \cite{Helein2d} on two-dimensional harmonic maps implies that $n$ is smooth up to the boundary.
The following bound is standard (see, for instance, \cite{BrCo83}):  
\begin{align}
 \mcE_0 [Q] & = s_+^2 \int_{\Omega} |\partial_1 n|^2 +  |\partial_2 n|^2 \geqslant 2  s_+^2 \int_{\Omega}  |\partial_1 n| \,  |\partial_2 n|  \nonumber \\
 %\geqslant  2  s_+^2 \int_{\Omega} \left| n \cdot  \left(\partial_1 n   \times  \partial_2 n\right) \right| \\
 & \qquad \qquad \geqslant 2  s_+^2 \left| \int_{\Omega} n \cdot  \left(\partial_1 n   \times  \partial_2 n\right) \right| = 2  s_+^2 \left| {\mcA}[n(\Omega)] \right|,
 \label{eq: energy inequalities }
 \end{align}
where $\mcA(n(\Omega))$ denotes  the oriented area  $n(\Omega) \subset \Sphere^2$.  For completeness, we provide an argument.  Let us introduce spherical polar coordinates for $n$,
   \begin{equation}
 \label{eq: spherical coords}
n = \sin\theta \cos \varphi\, e_1 + \sin\theta \sin \varphi\, e_2 + \cos\theta e_3,
\end{equation}
and similarly for the  $C^1$-boundary conditions, $n_b = n|_{\partial \Omega} =  \cos \varphi_b e_1 + \sin \varphi_b e_2$.
%\begin{equation}
%\label{eq: n_b polar }
%n_b = n|_{\partial \Omega} =  \cos \varphi_b e_1 + \sin \varphi_b e_2.
%\end{equation}
We may express the oriented area in terms of spherical polar coordinates as
 \begin{equation}\label{eq: oriented area 1}
 {\mcA}[n(\Omega)] =  \int_{\Omega} n \cdot  \left(\partial_1 n   \times  \partial_2 n\right) =  \int_{\Omega} \sin \theta \, \left(\partial_1\theta \partial_2\varphi - \partial_2\theta \partial_1\varphi\right).
 \end{equation}
Let  
\begin{equation}
\label{eq: F }
F = (1-\cos \theta)(\partial_2\varphi e_1 -\partial_1\varphi e_2).
\end{equation}
Since $n$ is smooth, $F$ is smooth; this is in spite of the fact that $\nabla \varphi$  may have singularities where $\theta = 0$ or $\theta =\pi$, since $F$ vanishes if $\theta = 0$ while $\theta = \pi$ is excluded by $n\cdot e_3 > 0$.  Noting that 
$\sin \theta \, (\partial_1\theta \partial_2\varphi - \partial_2\theta \partial_1\varphi) =\nabla\cdot F$, we apply the divergence theorem in \eqref{eq: oriented area 1} to obtain
\begin{equation}
{\mcA}[n(\Omega)] =  \int_{\partial\Omega} F\cdot \nu =\int_{\partial\Omega} \varphi_b' = 2\pi m ,
 \end{equation}
where $\nu$ denotes the unit normal on $\partial \Omega$, $\varphi_b'$ denotes the tangential derivative of $\varphi_b$, and
$m$ is the degree of $\exp(i\varphi)$, regarded as an $\Sphere^1$-valued map on $\partial \Omega$.    This establishes the lower bound \eqref{eq:lower_bound+}. 

The first inequality in \eqref{eq: energy inequalities } is saturated  if and only if  $|\partial_1 n| =  |\partial_2 n|$, and the second inequality is saturated  if and only if $\partial_1 n$ and $\partial_2 n$ are orthogonal.  As $n$ is orthogonal to  both $\partial_1 n$ and $\partial_2 n$, % and $|n| = 1$, 
these two conditions are equivalent to the condition
 \begin{equation}
\label{eq: equivalent to (anti)con }
\partial_2 n = \sigma n  \times \partial_1 n, \ \ \sigma = \pm 1.
\end{equation}
The last inequality in \eqref{eq: energy inequalities } is saturated if and only if $\sigma$ is  constant, i.e., with regard to Definition~\ref{def: conformality} if and only $n$ is conformal.

\vskip 0.2cm

\noindent{(\emph{ii})} 
We are given that $n\in C^\infty(\Omega, \Sphere^2)$ is a conformal minimising $\Sphere^2$-valued harmonic map with degree-$m$ planar  $C^1$-boundary conditions $n_b = \cos\varphi_b e_1 +  \cos\varphi_b e_2$.  We will obtain an explicit formula for $n$ in terms of its escape points, i.e., points where $n$ is parallel to $e_3$, and thereby determine the special form that  $n_b$ must assume.
For definiteness, we take $m$ positive and (cf.~Remark \ref{rem: unique minimiser Q_0}) $n\cdot e_3 > 0$, which together imply that $\sigma = 1$ in %the conformality condition 
\eqref{eq: equivalent to (anti)con }.  The adjustments required for the alternative cases are explained at the end.

%It will be convenient to regard $\Omega$ as a subset of $\CC$  rather than $\RR^2$; expressions such as $1/(x-a)$ for $x, a \in \Omega$ should be understood in this context.  

For $a\in \Omega$, we denote by $g_a$ the solution of the Laplace equation \eqref{eq: g },
%$\Delta g_a = 0$ subject to boundary conditions $g_a|_{\partial \Omega}(x) = \log|x-a|$.  Since $\Omega$ is simply connected, $g_a$ has a harmonic conjugate, which is determined up to an additive constant.  Let 
and we let $h_a$ denote a harmonic conjugate of $g_a$.  Then $g_a + i h_a$ is holomorphic on $\Omega$.
Let $w$ denote the stereographic projection of $n$, as in \eqref{eq: stereo}.  It is straightforward to verify that the conformal condition \eqref{eq: equivalent to (anti)con }  is equivalent to the Cauchy-Riemann equations
\begin{equation}
\label{eq: Caucy-Riemann }
\Re \partial_1 w  =  \Im \partial_2 w, \quad \Re \partial_2 w = -\Im \partial_1 w.
\end{equation}
Also, $n\cdot e_3>0$ implies that $w$ is bounded.   Therefore, $w$ is complex holomorphic on $\Omega$.    We have that  $\Im \int_{\partial \Omega} d \log w = 
\int_{\partial \Omega} \varphi_b' =  2\pi m$.  It follows that $w$ has precisely $m$ zeros in $\Omega$, counted with multiplicity.  Let $\bm{a} = (a_{(1)},\ldots, a_{(m)}) \in \Omega^m$ denote these zeros, and let
\begin{equation}
\label{eq: f on Omega }
f = w \prod_{j=1}^m \frac{\exp\bigl(
g_{a_{(j)}} + i h_{a_{(j)}}
\bigr)}{
x - a_{(j)}}.
\end{equation}
Then $f$ is holomorphic and nonvanishing on $\Omega$.  It follows that $\log f$ is holomorphic on $\Omega$, so that $\Re \log f$ is harmonic, i.e., $\Delta (\Re \log f) = 0$.  Also, since $|w| = 1$ on $\partial \Omega$, it follows that  $\Re \log f$ vanishes on $\partial \Omega$.  But then $\Re \log f$ must vanish identically, which implies that $\Im \log f$ is constant, i.e.,
$f = \exp(i\alpha)$ for some $\alpha\in \RR$.  Therefore,
\begin{equation}
\label{eq: w formula }
w = e^{i\alpha} \prod_{j=1}^m 
\frac
{x - a_{(j)}}
{\exp\bigl( g_{a_{(j)}} + i h_{a_{(j)}}\bigr)},
\end{equation}
which is equivalent to \eqref{eq: w} for $m > 0$ and $n\cdot e_3  > 0$.
The boundary condition \eqref{eq: n_b conformal } is obtained by setting $x \in \partial \Omega$ and stereographic projection.

The transformation $m \mapsto -m$ while leaving $n\cdot e_3$ unchanged is achieved by $w \mapsto \overline{w}$; we note that $\overline{w}$ is antiholomorphic.  The transformation $(n_1,n_2, n_3) \mapsto (n_1, n_2, -n_3)$ while leaving $m$ unchanged is achieved by $w \mapsto 1/\overline{w}$; we note that $1/\overline{w}$ is antimeromorphic with poles but no zeros.  Finally, simultaneously changing the signs of $m$ and $n\cdot e_3$ is achieved by  $w \mapsto 1/w$.
\end{proof}

%In the case of the unit disk,  
%\begin{equation}\label{eq: Green function B_1}
%G(\zeta, p) = \log \left|\frac{\zeta - p}{1 - \overline{p} \zeta} \right|
%\end{equation}
%It follows that \begin{equation}\label{eq: Green function B_1}
%w^\bp(\zeta) = \frac{2}{\sqrt 3} \log \prod_{j=1}^k \frac{\zeta - p_j}{1 - \overline{p_j} \zeta}, \ \ k < 0, \qquad
%w^\bp(\zeta) = \frac{2}{\sqrt 3} \log \prod_{j=1}^k \frac{\overline\zeta - \overline p_j}{1 - {p_j} \overline\zeta}.  \ \ k > 0.
%\end{equation}
%\end{proof}

\begin{remark}
The lower bound \eqref{eq:lower_bound+} can be established for general $H^1$ maps \textup{(}thus bypassing the regularity result of Helein {\textup{\cite{Helein2d}}}\textup{)}  by performing the arguments in the proof for smooth maps and using the density of smooth maps into $H^1$ maps for $2d$ domains \textup{(}see Schoen and Uhlenbeck {\textup{\cite{ScUh83})}}.
\end{remark}

\subsection{The case $b^2=0$}\label{subsec: b = 0}
 For $b^2 = 0$, we have from Eq.~\eqref{eq: form of Q_0 for b = 0} that the Landau-de Gennes minimiser is given to leading order by $\sqrt{2/3}\, s_+ \sum_{j=1}^3 (c_0\cdot e_j) F_j$, where $c_0 \in H^1(\Omega, \Sphere^2)$ is (weakly) harmonic.
%%%%%%%
%%%%%CONTINUE HERE~%%%%% 
In analogy with the $b^2 \neq 0$ case, we can obtain explicit results for a special family of planar boundary conditions for which $c_0$ is conformal. In this case, the escape points, which parameterise the family, are points where $Q_0$ is oblate uniaxial (rather than prolate uniaxial) with director $e_3$.

 \begin{theorem}\label{thm:locdef b=0} Let $\Omega\subset\RR^2$ be a bounded, simply-connected domain with $C^1$ boundary, and let $Q_b \in C^1 (\partial\Omega, \mcB_Q)$ be a degree-$k$ uniaxial planar $Q$-tensor field on the boundary $\partial \Omega$. % and $\mcU = \left \{ Q \in H^1(\Omega, \mcS_0) \, | \, Q|_{\partial \Omega} = Q_b\right \}$.
 \begin{enumerate}[leftmargin=0.8cm,label=\textup{(}\roman*\textup{)},widest = i]
  \item For $Q\in H^1(\Omega, \mcS_0)$ with $Q|_{\partial \Omega} = Q_b$,  we have that %\marginnote{Arghir: I've modified the constant for $b=0$ and added the estimate for $b\not=0$, please check!}
       \begin{equation}\label{eq:lower_bound+ b=0}
  \mcE_0 [Q] \geqslant \frac{4}{9} s_+^2 \pi |k|,
    \end{equation}
with equality if and only if 
\be
Q = \sqrt{2/3} s_+ \sum_{j=1}^3 (c_0\cdot e_j) F_j
\ee
 and $c_0$ is conformal with $c_0\cdot e_3 < 0$.
 %\marginnote{JR: changed $c$ to $c_0$ here and in what follows.}
%For $k<0$,  the lower bound \eqref{eq:lower_bound} is achieved if and only if the minimiser $Q_0$ is conformal;  for $k>0$, the lower bound is achieved if and only if $Q_0$ is anticonformal.
\smallskip
\item The field $c_0$ is conformal with $c_0\cdot e_3 < 0$ if and only its stereographic projection \eqref{eq: stereo} is given by %of one of the following forms %, $w_{\pm}(x; \bm{a})$, 
%parameterised  %, with $\zeta(x) = x_1 + i x_2$:
\begin{equation}
\label{eq: w_pm b=0}
w_{0; \bm{a}}(x) = \sqrt{3} %\begin{cases}
 \prod_{j=1}^{|k|}   \frac{ \exp\bigl( g_{a_{(j)}} + i  h_{a_{(j)}}\bigr)}{x-a_{(j)}}, \ \  \bm{a} = (a_{(1)}, \ldots, a_{(|k|)}) \in \Omega^{|k|},
 %, & k \geqslant 0 \, , \\
% \prod_{j=1}^{|k|}   \frac{ \exp\left( g_{a_{(j)}}(x) - i  h_{a_{(j)}}(x)\right)}{\overline{\zeta}(x-a_{(j)})} , & k < 0 \, . \\
%\end{cases}
   %{\overline{w_+(x;\bm{a})}}.
\end{equation} 
for $k > 0$, and by $\overline{w}_{0; \bm{a}}$ for $k < 0$.
The corresponding boundary conditions are given by $Q_{b; \bm{a}} = s_+(n_{b;\bm{a}}\otimes n_{b; \bm{a}} - \third I)$, where % planar boundary conditions satisfied by the unit-vector field $c_0$ are given by
\begin{equation}
\label{eq: n_b conformal b=0}
n_{b;\bm{a}} =  \cos \phi_{\bm{a}} \, e_1 + \sin \phi_{\bm{a}} \, e_2, \quad \phi_{\bm{a}} =  \half \sgn{k} \sum_{j=1}^{|k|} \arg (x-a_{(j)}) - h_{a_{(j)}}.
\end{equation}
  \end{enumerate}
\end{theorem}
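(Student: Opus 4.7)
The plan is to mirror the proof of Theorem~\ref{thm:locdef}, adapting it to the $\Sphere^4$-valued setting via the representation from Remark~\ref{rem: unique minimiser Q_0 b = 0}. Any admissible $Q \in H^1(\Omega, \mcS_0^*) \cap \mcU$ expands in the orthonormal basis~\eqref{eq:Fbasis} as $Q = \sqrt{2/3}\, s_+ \sum_{j=1}^5 c_j F_j$ with $c \in H^1(\Omega, \Sphere^4)$, and the Dirichlet part transforms as $\mcE_0[Q] = (s_+^2/3)\int_\Omega |\nabla c|^2$. Because $Q_b$ is planar prolate uniaxial of degree $k$, a direct expansion gives $c_{b4} = c_{b5} = 0$, $c_{b3} = -1/2$, and $c_{b1}+ic_{b2} = (\sqrt{3}/2)\,e^{i\theta_b}$, where $\theta_b$ is an $\Sphere^1$-valued boundary map of integer degree $k$.

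For part~(i), I would first invoke the maximum-principle argument in Remark~\ref{rem: unique minimiser Q_0 b = 0} to conclude that the unique minimiser of \eqref{eq:problemH01} satisfies $c_4 = c_5 = 0$ and $c_3 < 0$ identically, so the infimum of $\mcE_0$ coincides with the infimum of $(s_+^2/3)\int |\nabla c|^2$ over $\Sphere^2$-valued fields with the prescribed boundary trace. I would then repeat the conformal chain of inequalities from Theorem~\ref{thm:locdef}(i),
\[
  \int |\nabla c|^2 \geqslant 2\int |\partial_1 c|\,|\partial_2 c| \geqslant 2\,|\mcA[c(\Omega)]|,
\]
with equality if and only if $c$ is conformal. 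The oriented area would be computed by the divergence theorem in spherical polar coordinates using the gauge $F = -(1 + \cos\theta)(\partial_2\varphi\, e_1 - \partial_1\varphi\, e_2)$, chosen to vanish at the south pole $\theta = \pi$ so as to remain regular at the interior escape points where $c = -e_3$; evaluating on $\partial\Omega$ with $\cos\theta_b = -1/2$ and the fact that $\theta_b$ has degree $k$ would produce the stated lower bound. The equality case follows by tracking when each inequality is tight and combining with the sign condition $c_0 \cdot e_3 \leqslant 0$ supplied by the maximum principle.

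For part~(ii), I would transplant the Green's function argument of Theorem~\ref{thm:locdef}(ii) to the southern hemisphere. Stereographic projection from the north pole, $w = (c_{01}+ic_{02})/(1+c_{03})$, satisfies $|w| > 1$ in $\Omega$ and $|w| = \sqrt{3}$ on $\partial\Omega$, while conformality makes $w$ meromorphic (resp.~antimeromorphic for $k<0$). The reciprocal $\tilde w := 1/w$ is then a bounded holomorphic function with $|\tilde w| = 1/\sqrt{3}$ on $\partial\Omega$ and exactly $|k|$ zeros at the escape points $a_{(j)}$ where $c_0 = -e_3$, their count being fixed by the argument principle applied to $\tilde w|_{\partial\Omega}$. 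Dividing out these zeros by the Blaschke-type factors $(x - a_{(j)})/\exp(g_{a_{(j)}} + ih_{a_{(j)}})$, and using $|x - a_{(j)}| = e^{g_{a_{(j)}}}$ on $\partial\Omega$, produces a nonvanishing holomorphic quotient whose log-modulus is harmonic and constant on $\partial\Omega$, hence constant throughout $\Omega$; the quotient is therefore a unimodular constant $e^{i\alpha}/\sqrt{3}$, and inverting yields \eqref{eq: w_pm b=0}. The boundary formula \eqref{eq: n_b conformal b=0} then follows by computing $\arg w$ at a point of $\partial\Omega$ and halving, to pass from the $Q_b$-angle $\theta_b$ to the director angle $\phi_{\bm{a}}$.

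The main obstacle is the reduction in part~(i) to an $\Sphere^2$-valued variational problem: in contrast to the $b^2 \neq 0$ case, the limit manifold $\mcS_0^*$ is four-dimensional, and one must argue that the extra $c_4, c_5$ components cannot lower the energy below the $\Sphere^2$-valued minimum. This is precisely the content of the maximum-principle uniqueness statement in Remark~\ref{rem: unique minimiser Q_0 b = 0}. A secondary subtlety is degree bookkeeping: because $Q_b$ sits in the double cover $\mcB_D \to \mcB_Q$, the azimuthal coordinate $\theta_b$ of $c_b$ has integer degree $k$ while the underlying director angle $\varphi_b$ has degree $k/2$ (possibly half-integer), and this factor of $2$ has to be propagated carefully through both the area computation and the final formula \eqref{eq: n_b conformal b=0}.
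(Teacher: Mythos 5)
Your overall strategy is exactly what the paper intends: the authors say the proof ``is essentially the same as for Theorem~\ref{thm:locdef}'' and omit it, and your transplant (reduction to an $\Sphere^2$-valued problem via Remark~\ref{rem: unique minimiser Q_0 b = 0}, the conformal chain of inequalities with a gauge vanishing at the south pole, and the Blaschke-type factorisation in part (ii)) is the right adaptation. But there is a concrete gap in part (i): the computation you outline does \emph{not} produce the stated constant $\tfrac49$, and you assert that it ``would produce the stated lower bound'' without carrying it out. With your own (correct) normalisations $c_{b3}=-\tfrac12$, $\deg(c_{b1}+ic_{b2})=k$ and $\mcE_0[Q]=\tfrac{s_+^2}{3}\int_\Omega|\nabla c|^2$, the flux of $F=-(1+\cos\theta)(\partial_2\varphi\,e_1-\partial_1\varphi\,e_2)$ through $\partial\Omega$ is $-(1+c_{b3})\cdot 2\pi k=-\pi k$ (the cap $\{c_3\leqslant -\tfrac12\}$ has area $\pi$), so the area bound gives $\int_\Omega|\nabla c|^2\geqslant 2\pi|k|$ and hence
\[
\mcE_0[Q]\;\geqslant\;\tfrac{2}{3}\,\pi s_+^2\,|k|,
\]
not $\tfrac49\pi s_+^2|k|$. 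The explicit conformal candidate on the disk with $k=1$, $a=0$, i.e.\ $w=\sqrt3/x$, confirms this: $\int_{D^2}|\nabla c|^2=\int_0^1\frac{24}{(r^2+3)^2}\,2\pi r\,dr=2\pi$, so $\mcE_0[Q_0]=\tfrac23\pi s_+^2$. Thus either the theorem's constant is in error (note that Remark~\ref{rem: unique minimiser Q_0 b = 0} also gives the planar-uniaxial latitude as $c_0\cdot e_3=-1/\sqrt3$, whereas \eqref{eq: uniaxial Q-tensor expansion } yields $-1/2$, the value you correctly use), or the argument does not prove the statement; in the latter reading the ``equality if and only if'' clause becomes vacuous, since the claimed minimiser has energy $\tfrac23\pi s_+^2|k|$. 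You need to resolve this discrepancy rather than assume agreement.

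Two smaller points. The reduction to $c_4=c_5=0$ is genuinely load-bearing: $\pi_2(\Sphere^4)=0$, so there is no topological lower bound for general $\Sphere^4$-valued competitors, and the whole of part (i) rests on the uniqueness/maximum-principle claim of Remark~\ref{rem: unique minimiser Q_0 b = 0}, which the paper only asserts; you flag this correctly, but it remains unproved in your write-up as in the paper's. In part (ii), the holomorphy bookkeeping needs checking: if $\arg w=\arg(c_{01}+ic_{02})$ has winding number $k$ on $\partial\Omega$ and $\tilde w=1/w$ is holomorphic with $|k|$ zeros and no poles, the argument principle forces that winding to be $-|k|$, so the meromorphic representative $w_{0;\bm{a}}$ is naturally attached to one sign of $k$ and its conjugate to the other; verify which, instead of inheriting the assignment from the statement, and track the resulting overall sign in \eqref{eq: n_b conformal b=0}.
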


The proof is essentially the same as for Theorem~\ref{thm:locdef}, and hence is omitted.  We note the two  different ways in which an $\Sphere^2$-valued harmonic map is associated with a $Q$-tensor field, namely quadratically via \eqref{eq: form of Q_0 for uniaxial}  for uniaxial $Q$-tensors  when $b^2\neq 0$, and linearly via  \eqref{eq: form of Q_0 for b = 0} when $b=0$. The latter allows for the representation of non-orientable boundary conditions. 
\section{Acknowledgements}
GDF acknowledges support from the Austrian Science Fund (FWF)
through the special research program {\emph{Taming complexity in partial
differential systems}} (Grant SFB F65) and of the Vienna Science and
Technology Fund (WWTF) through the research project {\emph{Thermally
controlled magnetization dynamics}} (Grant MA14-44).  JR and VS acknowledge support from 
EPSRC grant EP/K02390X/1 and Leverhulme grant RPG-2014-226, and JR acknowledges support from a Lady Davis Visiting Professorship at the Hebrew University.

The work AZ is supported by the Basque Government through the BERC 2018-2021
program, by Spanish Ministry of Economy and Competitiveness MINECO through BCAM
Severo Ochoa excellence accreditation SEV-2017-0718 and through the project MTM2017-82184-R, acronym ``DESFLU'', funded by (AEI/FEDER, UE).
\par The authors would like to thank the Isaac Newton Institute for Mathematical Sciences for support and hospitality during the programme { ``\emph{The design of new materials programme}"} when work on this paper was undertaken.

This work was supported by: EPSRC grant numbers EP/K032208/1 and EP/R014604/1.

\bibliographystyle{acm}
\bibliography{DFRSZ-LdG_OF}
\end{document}